\begin{document}
\title{The Ultrapower Axiom and the equivalence between strong compactness and supercompactness}
\author{Gabriel Goldberg}
\maketitle
\begin{abstract}
The relationship between the large cardinal notions of strong compactness and supercompactness cannot be determined under the standard ZFC axioms of set theory. Under a hypothesis called the Ultrapower Axiom, we prove that the notions are equivalent except for a class of counterexamples identified by Menas. This is evidence that strongly compact and supercompact cardinals are equiconsistent.
\end{abstract}
\section{Introduction}
{\it How large is the least strongly compact cardinal?} Keisler-Tarski \cite{Tarski} asked whether it must be larger than the least measurable cardinal, and Solovay later conjectured that it is much larger: in fact, he conjectured that every strongly compact cardinal is supercompact. His conjecture was refuted by Menas \cite{Menas}, who showed that the least strongly compact limit of strongly compact cardinals is not supercompact. But Tarski's question was left unresolved until in a remarkable pair of independence results, Magidor showed that the size of the least strongly compact cannot be determined using only the standard axioms of set theory (ZFC). More precisely, it is consistent with ZFC that the least strongly compact cardinal is the least measurable cardinal, but it is also consistent that the least strongly compact cardinal is the least supercompact cardinal. 

One of the most prominent open questions in set theory asks whether a weak variant of Solovay's conjecture might still be true: {\it is the existence of a strongly compact cardinal equiconsistent with the existence of a supercompact cardinal?} Since inner model theory is essentially the only known way of proving nontrivial consistency strength lower bounds, answering this question probably requires generalizing inner model theory to the level of strongly compact and supercompact cardinals. In this paper, we show that in any canonical inner model built by anything like today's inner model theoretic methodology, the least strongly compact cardinal is supercompact. This suggests that strong compactness and supercompactness are equiconsistent.

The precise statement of our theorem involves a combinatorial principle called the Ultrapower Axiom (UA). This principle holds in all known inner models as a direct consequence of the current methodology of inner model theory. It is therefore expected to hold in any future inner model. We prove:

\begin{thm}[UA]
The least strongly compact cardinal is supercompact.
\end{thm}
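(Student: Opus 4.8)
The plan is to pass from elementary embeddings to ultrafilters and then exploit the wellorder that UA imposes on them. First I would recall Ketonen's characterization of strong compactness: a cardinal $\kappa$ is strongly compact if and only if every regular $\lambda \geq \kappa$ carries a $\kappa$-complete uniform ultrafilter. Dually, $\kappa$ is supercompact if and only if for every $\lambda \geq \kappa$ there is a normal fine ultrafilter on $P_\kappa(\lambda)$, equivalently an ultrapower embedding $j : V \to M$ with $\mathrm{crit}(j) = \kappa$, $j(\kappa) > \lambda$, and ${}^\lambda M \subseteq M$. The whole problem thereby becomes: starting from the merely fine (uniform) ultrafilters handed to us by strong compactness, manufacture normal fine ones.

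The organizing tool is the Ketonen order $<_k$ on countably complete uniform ultrafilters on a fixed regular $\lambda$, under which $U <_k W$ means roughly that $U$ is a $W$-limit of ultrafilters concentrated on smaller ordinals. Countable completeness makes $<_k$ wellfounded, and UA is exactly the hypothesis needed to make it linear; hence $<_k$ is a wellorder, and for each regular $\lambda$ carrying a $\kappa$-complete uniform ultrafilter there is a canonical $<_k$-least such ultrafilter $W_\lambda$. The heart of the argument is to show that when $\kappa$ is strongly compact these least ultrafilters are as normal as possible: I would prove that the ultrapower $M_{W_\lambda}$ is closed under $\lambda$-sequences, so that $W_\lambda$ (or the fine ultrafilter on $P_\kappa(\lambda)$ derived from it) witnesses the $\lambda$-supercompactness of $\kappa$.

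The main obstacle is precisely this last step — upgrading minimality in the Ketonen order to the closure property ${}^\lambda M \subseteq M$. Here I expect to need the full strength of UA in the form of internal ultrapower comparisons: given the least ultrafilter $W_\lambda$ and any competitor, comparing the two ultrapowers and invoking minimality should force $W_\lambda$ to be irreducible and ``Dodd sound,'' and it is this soundness that produces the missing sequences inside $M_{W_\lambda}$. Establishing that minimal ultrafilters are Dodd sound, and that Dodd soundness together with $\kappa$-completeness on a regular cardinal delivers the supercompactness closure, is the technical core and the step most likely to demand genuinely new machinery rather than routine manipulation.

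Finally I would invoke the hypothesis that $\kappa$ is the \emph{least} strongly compact cardinal in order to dispose of Menas's counterexamples. The above analysis is naturally carried out as a dichotomy: under UA a strongly compact $\kappa$ is either supercompact or a limit of smaller strongly compact cardinals, the latter being exactly Menas's exceptional case. Since supercompactness implies strong compactness, there are no strongly compact cardinals below the least strongly compact $\kappa$, so the degenerate limit case cannot arise. Combining the three ingredients — Ketonen's reduction, the analysis of least ultrafilters under UA, and the absence of strongly compact cardinals below $\kappa$ — should yield that $\kappa$ is $\lambda$-supercompact for every $\lambda$, i.e.\ supercompact.
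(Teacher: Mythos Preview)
Your outline diverges from the paper's argument at the decisive step, and that step is left as an unfilled black box. The paper does not work with the $\sE$-least $\kappa$-complete ultrafilter on $\lambda$, and it never invokes Dodd soundness. Instead it studies the $\sE$-least \emph{countably complete} uniform ultrafilter $U_\delta$ on each regular $\delta$ (the Ketonen ultrafilter), and later identifies its critical point with the least $\omega_1$-strongly compact cardinal (\cref{crtchar}, \cref{omega1Ketonen}). The mechanism that produces $(M_\delta)^\delta\subseteq M_\delta$ is not a soundness property but a three-part argument: (i) a universal property (\cref{KetonenUniversality}, \cref{KetonenInternal}) showing that any ultrapower embedding of $M_\delta$ continuous at $\sup j_\delta[\delta]$ is already internal, so that $W\cap M_\delta\in M_\delta$ for every $W\in\Un_{<\delta}$; (ii) a dichotomy (\cref{CoverDichotomy}) between tight covering at $\delta$ and absorption of \emph{all} ultrafilters in $\Un_{\le\delta}$; and (iii) an independent-families trick (\cref{Hausdorff}, \cref{PowerClm}) that uses $\delta$-strong compactness to convert ultrafilter absorption into $P(\delta)\subseteq M_\delta$, which contradicts $U_\delta\in M_\delta$ and forces tight covering.

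Your sentence ``comparing the two ultrapowers and invoking minimality should force $W_\lambda$ to be irreducible and Dodd sound'' is precisely where all the content lives, and you give no indication of how a comparison would yield soundness; the paper's replacement for this is the ultrafilter-absorption and independent-families machinery above, which is of an entirely different character. Your closing dichotomy (supercompact or limit of strongly compacts) is also not how the least strongly compact cardinal is handled here: that dichotomy (\cref{MenasLocal}) is established only in the second half of the paper, via the Irreducibility Theorem, and is used for \emph{arbitrary} strongly compact cardinals; the first one is dealt with directly by showing it equals $\textsc{crt}(j_\delta)$ for every successor $\delta\ge\kappa$.
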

 
The proof of this fact takes up the first half of this paper.

Given the supercompactness of the first strongly compact cardinal under UA, it is natural to wonder about the second one. The proof that the first strongly compact cardinal is supercompact {\it does not} generalize to show that the second strongly compact is supercompact, at least not in any obvious way. The second half of the paper is devoted to proving  that assuming UA, every strongly compact cardinal is supercompact with the possible exception of limits:
\begin{thm}[UA]
If \(\kappa\) is a strongly compact cardinal, either \(\kappa\) is supercompact or \(\kappa\) is a limit of supercompact cardinals.
\end{thm}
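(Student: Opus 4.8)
The plan is to deduce the theorem from a single relativized form of the first theorem. Call a strongly compact cardinal \emph{isolated} if it is not a limit of strongly compact cardinals, i.e.\ if the strongly compact cardinals below it are bounded below it. The lemma I would isolate is: assuming UA, every isolated strongly compact cardinal is supercompact. Granting this, the theorem follows from a short combinatorial argument. Let $\kappa$ be strongly compact. If $\kappa$ is isolated, the lemma gives that $\kappa$ is supercompact, the first alternative. Otherwise $\kappa$ is a limit of strongly compact cardinals, and I claim it is then a limit of \emph{supercompact} cardinals. Fix $\delta < \kappa$. Since $\kappa$ is a limit of strongly compacts there is a strongly compact strictly between $\delta$ and $\kappa$, so the least strongly compact cardinal $\gamma$ above $\delta$ satisfies $\delta < \gamma < \kappa$. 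By minimality, every strongly compact below $\gamma$ is at most $\delta$, hence bounded below $\gamma$; thus $\gamma$ is isolated and so supercompact by the lemma. As $\delta < \kappa$ was arbitrary, the supercompact cardinals are cofinal in $\kappa$, the second alternative. This also explains the Menas phenomenon under UA: his counterexamples, being limits of strongly compacts, are exactly the non-supercompact strongly compacts, and the argument upgrades them to limits of supercompacts.

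It remains to prove the lemma, and here I would relativize the proof of the first theorem. That theorem is precisely the lemma in the case where $\gamma$ is the \emph{least} strongly compact cardinal, for then there are no strongly compact cardinals below $\gamma$ whatsoever. For a general isolated $\gamma$, set $\delta = \sup\{\mu < \gamma : \mu \text{ is strongly compact}\}$, so that $\delta < \gamma$ and no cardinal in the interval $(\delta,\gamma)$ is strongly compact. Fixing $\lambda \geq \gamma$, strong compactness provides a $\gamma$-complete fine ultrafilter on $P_\gamma(\lambda)$, equivalently a $\lambda$-tight embedding with critical point $\gamma$; the goal is to manufacture from it a \emph{normal} fine ultrafilter on $P_\gamma(\lambda)$, witnessing $\lambda$-supercompactness. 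Exactly as in the first theorem, I would pass to a minimal such object in the Ketonen order, which is linear under UA, and argue that minimality forces normality: a failure of normality would let one factor the associated embedding through a strictly smaller countably complete ultrafilter, contradicting minimality. Since $\lambda$-supercompactness for cofinally many $\lambda$ yields it for all $\lambda$, it suffices to treat large $\lambda$.

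The main obstacle is precisely the feature distinguishing the isolated case from the least-strongly-compact case. In the latter there are no relevant countably complete ultrafilters with small critical point to spoil the minimality argument, so the factor extracted from a failure of normality is forced to sit at $\gamma$ and yields a contradiction. For a general isolated $\gamma$, however, the interval $(\delta,\gamma)$ may contain measurable, or even much stronger, cardinals, and countably complete ultrafilters concentrating on their critical points, together with those coming from the genuine strongly compact cardinals at or below $\delta$, could in principle appear in the Ketonen comparison and prevent the factor embedding from being normal at $\gamma$. The heart of the argument is to show these intermediate ultrafilters are harmless, and the leverage is that \emph{no} cardinal in $(\delta,\gamma)$ is strongly compact: by Ketonen's characterization each such $\mu$ omits a $\mu$-complete uniform ultrafilter on some regular cardinal, so for $\lambda$ past the supremum of these witnesses no $\lambda$-tight embedding has critical point in $(\delta,\gamma)$. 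One must convert this failure of tightness, and an analysis of the embeddings with critical point below $\delta$, into a proof that every such factor is absorbed in the comparison, leaving $\gamma$ as the genuine critical point of the minimal, and therefore normal, fine ultrafilter. Carrying out this absorption uniformly in $\lambda$ is where the real work lies.
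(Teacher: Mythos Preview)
Your combinatorial reduction is clean and correct: the theorem follows at once from the lemma that every strongly compact cardinal which is not a limit of strongly compact cardinals is supercompact. But this reformulation does not reduce the difficulty; your lemma is equivalent to the theorem, and the entire content lies in proving it.

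The proposed proof of the lemma has a genuine gap. You suggest relativizing the proof of the first theorem, taking a Ketonen-minimal \(\gamma\)-complete fine ultrafilter and arguing that minimality forces normality. But the proof of the first theorem does not work by extracting a factor from a failure of normality. It works because the absolute \(\sE\)-minimum \(U_\delta\) enjoys a \emph{universal property}: every zero-order ultrapower at \(\delta\) factors through \(j_\delta\) by an internal embedding. This in turn yields the crucial fact that any ultrapower embedding of \(M_\delta\) continuous at \(\sup j_\delta[\delta]\) is internal to \(M_\delta\), and from this one bootstraps the closure of \(M_\delta\). For a higher strongly compact \(\gamma\), the \(\sE\)-least \(\gamma\)-complete ultrafilter \(U^\gamma_\delta\) has no such universal property: there is no reason an arbitrary zero-order embedding should factor through it, since smaller ultrafilters (with critical point below \(\gamma\)) sit strictly below it in \(\sE\). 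Your paragraph about ``absorbing'' the intermediate ultrafilters correctly identifies the obstacle but offers no mechanism for carrying out the absorption; the observation that cardinals in \((\delta,\gamma)\) are not strongly compact does not by itself prevent ultrafilters with critical point below \(\delta\) from appearing as factors. The paper itself remarks explicitly that the proof for the first strongly compact does not generalize in any obvious way to the second.

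The paper's route is quite different. It first proves an \emph{Irreducibility Theorem}: under UA, if \(\delta\) is a uniform successor cardinal and \(i:V\to M\) is a \({<}\delta\)-irreducible ultrapower, then \(M^\delta\subseteq M\). This is proved not by a minimality argument but by comparing \(i\) with \(j_\delta\), factoring the comparison map on the \(M_\delta\) side via Silver's indecomposability theorem, and then using the combinatorics of normal fine ultrafilters together with the pushout property of canonical comparisons to show the resulting Rudin--Frolik factor of \(i\) is trivial. Only then does one return to the target \(\kappa\): one shows that in \(M_D\) (for \(D\) the Mitchell-least normal measure on \(\kappa\)) the translated ultrafilter \(t_D(U^\kappa_\delta)=U^{\kappa^+}_{j_D(\delta)}\) is irreducible, applies the Irreducibility Theorem there, and reflects a \(j_D(\delta)\)-supercompact cardinal in \([\kappa,j_D(\kappa)]\) down to \(V\). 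This level-by-level statement then yields the global theorem by a stabilization argument similar in spirit to, but distinct from, your reduction.
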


In other words, the Ultrapower Axiom implies that the only counterexamples to Solovay's conjecture are the ones discovered by Menas. Given the amount of structure that appears in the course of our proof, without even a hint of inconsistency, it seems likely that the case of strongly compact limits of supercompact cardinals really can occur. In other words, we believe the Ultrapower Axiom can hold simultaneously with the existence of Menas's counterexamples. This would mean that UA is consistent with the existence of a strongly compact limit of supercompact cardinals. The only known way of proving the consistency of UA with large cardinals is by building canonical inner models\footnote{Current forcing techniques cannot build a model of UA with even one measurable cardinal. Forcing UA in the presence of a supercompact cardinal seems like a much harder problem.}, so the only conceivable explanation of the consistency of UA with a strongly compact limit of supercompact cardinals would be that there is a canonical inner model with many supercompact cardinals. We therefore think the results of this paper provide compelling evidence that such an inner model exists.

This paper is in a sense the sequel to \cite{Frechet} and we will need to cite many of the results of that paper. This paper can be understood without reading \cite{Frechet}, however, if one is willing to take the results of that paper on faith.
\section{Preliminaries}
\subsection{Uniform ultrafilters}
In this section we define two notions of ``uniform ultrafilter" and describe how they are related.
\begin{defn}
If \(\alpha\) is an ordinal, the {\it tail filter} on \(\alpha\) is the filter generated by sets of the form \(\alpha\setminus \beta\) where \(\beta < \alpha\). An ultrafilter on \(\alpha\) is {\it tail uniform} (or just {\it uniform}) if it extends the tail filter on \(\alpha\).
\end{defn}

Thus an ultrafilter \(U\) on an ordinal is uniform if and only if all sets in \(U\) have the same supremum. 

Notice that if \(\alpha\) is an ordinal, there is a {\it uniform principal ultrafilter} on \(\alpha+1\):
\begin{defn}
The {\it uniform principal ultrafilter} on \(\alpha+1\) is the ultrafilter \(P_\alpha = \{X\subseteq \alpha+1 : \alpha\in X\}\).
\end{defn}

Every ultrafilter on an ordinal restricts to a uniform ultrafilter on an ordinal in the following sense:
\begin{defn}
If \(U\) is an ultrafilter and \(X\in U\), then the {\it restriction of \(U\) to \(X\)} is the ultrafilter \(U\restriction X = U\cap P(X)\).
\end{defn}

\begin{lma}
If \(U\) is an ultrafilter on an ordinal, then there is a unique ordinal \(\alpha\) such that \(\alpha\in U\) and \(U\restriction \alpha\) is uniform.\qed
\end{lma}

\begin{defn}
If \(U\) is a uniform ultrafilter, the {\it space} of \(U\), denoted \(\textsc{sp}(U)\), is the unique ordinal \(\alpha\) such that \(\alpha\in U\).
\end{defn}

A somewhat different notion of uniformity is also in use: 

\begin{defn}
If \(X\) is an infinite set, the {\it Fr\'echet filter} on \(X\) is the filter generated by sets of the form \(X\setminus S\) where \(|S| < |X|\). An ultrafilter on \(X\) is {\it Fr\'echet uniform} if it extends the Fr\'echet filter on \(X\).
\end{defn}

Thus an ultrafilter \(U\) is Fr\'echet uniform if and only if all sets in \(U\) have the same cardinality.

Arbitrary ultrafilters restrict to Fr\'echet uniform ultrafilters:
\begin{lma}
If \(U\) is an ultrafilter, there is some \(X\in U\) such that \(U\restriction X\) is Fr\'echet uniform.\qed
\end{lma}

The following fact explains the relationship between tail uniform and Fr\'echet uniform ultrafilters:

\begin{lma}\label{RegTailFrec}
If \(\delta\) is a regular cardinal, then an ultrafilter on \(\delta\) is uniform if and only if it is Fr\'echet uniform.\qed
\end{lma} 

Since we will mostly be concerned with ultrafilters on regular cardinals in this paper, the distinction will not be that important here.

\begin{lma}\label{CofUniform}
Suppose \(\alpha\) is an ordinal and \(f : \alpha\to \alpha'\) is a weakly increasing cofinal function. Then for any uniform ultrafilter \(U\) on \(\delta\), \(f_*(U)\) is a uniform ultrafilter on \(\alpha'\).\qed
\end{lma}

\begin{defn}
An ordinal \(\alpha\) is {\it tail uniform} (or just {\it uniform}) if it carries a tail uniform countably complete ultrafilter. A cardinal \(\lambda\) is {\it Fr\'echet uniform} (or just {\it Fr\'echet}) if it carries a Fr\'echet uniform countably complete ultrafilter.
\end{defn}

The following is a consequence of \cref{RegTailFrec}:

\begin{prp}
If \(\delta\) is a regular cardinal, then \(\delta\) is tail uniform if and only if \(\delta\) is Fr\'echet.\qed
\end{prp}

If \(\delta\) is a regular cardinal, we say \(\delta\) is {\it uniform} to mean that \(\delta\) is tail uniform or equivalently that \(\delta\) is Fr\'echet.

The following is a consequence of \cref{CofUniform}:

\begin{prp}\label{CofUniform2}
An ordinal \(\alpha\) is tail uniform if and only if \(\textnormal{cf}(\alpha)\) is uniform.\qed
\end{prp}

There are characterizations of uniform ordinals that are closely analogous to Scott's elementary embedding characterization of measurable cardinals.

\begin{prp}\label{TailUnifChar}
An ordinal \(\alpha\) is tail uniform if and only if there is an elementary embedding \(j :V\to M\) that is discontinuous at \(\alpha\).\qed
\end{prp}

\begin{prp}\label{FrechUnifChar}
A cardinal \(\lambda\) is Fr\'echet if and only if there exist elementary embeddings \(V\stackrel{j}\longrightarrow M\stackrel{k}{\longrightarrow}N\) such that \(\sup j[\lambda]\leq \textsc{crt}(k) < j(\lambda)\).\qed
\end{prp}
\subsection{Large cardinals}
\begin{defn}
Suppose \(\kappa\leq \lambda\) are cardinals.

We say \(\kappa\) is {\it \(\lambda\)-supercompact} if there is an elementary embedding with critical point \(\kappa\) from the universe of sets into an inner model that is closed under \(\lambda\)-sequences. 
 
We say \(\kappa\) is {\it \(\lambda\)-strongly compact} if there is an elementary embedding \(j\) with critical point \(\kappa\) from the universe of sets into an inner model \(M\) such that every set \(A\subseteq M\) with \(|A|\leq \lambda\) is contained in a set \(A'\in M\) with \(|A'|^M < j(\kappa)\).
\end{defn}

We state the equivalence between the embedding formulations of these large cardinal axioms and the ultrafilter versions.

\begin{defn}
Suppose \(A\) is a set and \(\mathcal U\) is an ultrafilter on \(X\subseteq P(A)\). We say \(\mathcal U\) is {\it fine} if any \(a\in A\) belongs to \(U\)-almost all \(\sigma\in X\). We say \(\mathcal U\) is {\it normal} if any choice function on \(X\) is constant on a \(\mathcal U\)-large set.
\end{defn}

The following theorems appear in \cite{Kanamori} except for \cref{StrongcompactnessEquiv} (4) which is due to Ketonen \cite{Ketonen}:

\begin{thm}\label{SupercompactnessEquiv}
Suppose \(\lambda\) is a cardinal. Then the following are equivalent:
\begin{enumerate}[(1)]
\item \(\kappa\) is \(\lambda\)-supercompact.
\item There is a normal fine \(\kappa\)-complete ultrafilter on \(P_\kappa(\lambda)\).
\item There is a normal fine ultrafilter on \(P(\lambda)\) with completeness \(\kappa\).\qed
\end{enumerate}
\end{thm}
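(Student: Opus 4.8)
The plan is to run the cycle $(1)\Rightarrow(2)\Rightarrow(3)\Rightarrow(1)$. The two workhorses are the \emph{seed} (derived) ultrafilter construction for the forward direction and the ultrapower, together with the identity $[\mathrm{id}]_{\mathcal U}=j_{\mathcal U}[\lambda]$, for the return to an embedding; throughout, fineness and normality are taken relative to $A=\lambda$. I expect essentially all of the difficulty to sit in $(3)\Rightarrow(1)$, specifically in verifying closure under $\lambda$-sequences.

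For $(1)\Rightarrow(2)$, fix $j:V\to M$ with $\textsc{crt}(j)=\kappa$ and $M^\lambda\subseteq M$. Closure under $\lambda$-sequences places the seed $s=j[\lambda]$ into $M$; since $s\subseteq j(\lambda)$ and, by the standard fact that a $\lambda$-supercompactness embedding satisfies $j(\kappa)>\lambda$, $s$ has $M$-cardinality below $j(\kappa)$, we get $s\in j(P_\kappa(\lambda))$. I would then set $X\in\mathcal U\iff s\in j(X)$ and check the four clauses by elementarity: $\mathcal U$ is an ultrafilter because $j$ commutes with Boolean operations; it is $\kappa$-complete because $j$ fixes every ordinal below $\kappa$ and hence commutes with intersections of length $<\kappa$; it is fine because $j(\alpha)\in s$ for all $\alpha<\lambda$; and it is normal because for a choice function $f$ we have $j(f)(s)\in s$, so $j(f)(s)=j(\alpha)$ for some $\alpha<\lambda$, which pulls back to $\{\sigma:f(\sigma)=\alpha\}\in\mathcal U$.

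The step $(2)\Rightarrow(3)$ is a routine transfer: given $\mathcal U$ on $P_\kappa(\lambda)$, define $\mathcal W$ on $P(\lambda)$ by $X\in\mathcal W\iff X\cap P_\kappa(\lambda)\in\mathcal U$. Fineness and normality pass to $\mathcal W$ by restricting test functions to $P_\kappa(\lambda)$, and $\kappa$-completeness is inherited. The completeness is \emph{exactly} $\kappa$: the $\kappa$ fine sets $\{\sigma:\alpha\in\sigma\}$ for $\alpha<\kappa$ all lie in $\mathcal W$, yet their intersection $\{\sigma:\kappa\subseteq\sigma\}$ meets $P_\kappa(\lambda)$ in $\emptyset$ and so is not in $\mathcal W$, witnessing the failure of $\kappa^+$-completeness.

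The substance is in $(3)\Rightarrow(1)$. Given normal, fine $\mathcal W$ on $P(\lambda)$ of completeness $\kappa$, form $j=j_{\mathcal W}:V\to M=\mathrm{Ult}(V,\mathcal W)$, which is well-founded and has $\textsc{crt}(j)=\kappa$ since the critical point of an ultrapower equals the completeness of the ultrafilter. The crucial computation is $[\mathrm{id}]_{\mathcal W}=j[\lambda]$: the inclusion $j[\lambda]\subseteq[\mathrm{id}]$ is exactly fineness, while $[\mathrm{id}]\subseteq j[\lambda]$ is normality applied to any $f$ with $[f]\in[\mathrm{id}]$ — such an $f$ is a choice function on a large set, hence constant with some value $\alpha$, giving $[f]=j(\alpha)$. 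The main obstacle is then the closure $M^\lambda\subseteq M$, which I would obtain by coding: given $\langle a_\xi:\xi<\lambda\rangle$ with $a_\xi=[f_\xi]$, put $F(\sigma)=\langle f_\xi(\sigma):\xi\in\sigma\rangle$; by {\L}o\'s's theorem $b=j(F)(j[\lambda])$ is a function in $M$ with domain $j[\lambda]$ and $b(j(\xi))=a_\xi$. Since $j[\lambda]\in M$ has order type $\lambda$, its increasing enumeration $j\restriction\lambda$ also lies in $M$, so the composite $b\circ(j\restriction\lambda)\in M$ is the desired sequence. Because this coding never used where $\mathcal W$ concentrates, it applies verbatim to an ultrafilter on $P_\kappa(\lambda)$ as well; the concentration of $\mathcal W$ on $P_\kappa(\lambda)$ then drops out a posteriori from $j(\kappa)>\lambda$. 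The remaining verifications are all elementarity and {\L}o\'s's theorem.
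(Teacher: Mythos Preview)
Your argument is correct and is precisely the standard textbook proof (derived ultrafilter from the seed $j[\lambda]$ for the forward direction, and $[\mathrm{id}]_{\mathcal W}=j[\lambda]$ plus the coding argument for closure in the reverse direction). The paper does not give its own proof of this statement: it is stated with a terminal \(\qed\) and attributed to \cite{Kanamori}, so there is nothing to compare beyond noting that your write-up is essentially the argument one finds there.
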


\begin{thm}\label{StrongcompactnessEquiv}
Suppose \(\lambda\) is a cardinal. Then the following are equivalent:
\begin{enumerate}[(1)]
\item \(\kappa\) is \(\lambda\)-strongly compact.
\item There is a \(\kappa\)-complete fine ultrafilter on \(P_\kappa(\lambda)\).
\item Every \(\kappa\)-complete filter generated by at most \(\lambda\) sets extends to a \(\kappa\)-complete ultrafilter. 
\item Every regular cardinal \(\delta\) such that \(\kappa\leq \delta\leq\lambda\) carries a \(\kappa\)-complete uniform ultrafilter.\qed
\end{enumerate}
\end{thm}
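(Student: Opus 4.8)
The plan is to establish the cycle of implications $(1)\Rightarrow(2)\Rightarrow(3)\Rightarrow(1)$, which are the classical ultrapower facts, together with the two further implications $(1)\Rightarrow(4)$ and $(4)\Rightarrow(1)$; the last of these is Ketonen's theorem and is where essentially all the difficulty lies. For $(1)\Rightarrow(2)$, I would fix a $\lambda$-strongly compact embedding $j:V\to M$ with critical point $\kappa$ and apply the covering property to the set $j[\lambda]\subseteq M$, which has size $\lambda$. This yields $A'\in M$ with $j[\lambda]\subseteq A'$ and $|A'|^M<j(\kappa)$; since the elements of $A'$ may be taken below $j(\lambda)$, we have $A'\in j(P_\kappa(\lambda))$. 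Then $\mathcal U=\{X\subseteq P_\kappa(\lambda): A'\in j(X)\}$ is a $\kappa$-complete ultrafilter, and it is fine because $j(a)\in A'$ for every $a<\lambda$ witnesses $\{\sigma: a\in\sigma\}\in\mathcal U$.

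For $(2)\Rightarrow(3)$, given a $\kappa$-complete fine $\mathcal U$ on $P_\kappa(\lambda)$ with ultrapower $j:V\to M$, the element $A'=[\mathrm{id}]_{\mathcal U}$ satisfies $j[\lambda]\subseteq A'$ by fineness and $|A'|^M<j(\kappa)$ because $\mathcal U$-almost every $\sigma$ has $|\sigma|<\kappa$. Now if $\mathcal F$ is $\kappa$-complete and generated by $\langle X_\alpha:\alpha<\lambda\rangle$, then in $M$ the intersection $Y=\bigcap\{\,j(\langle X_\alpha:\alpha<\lambda\rangle)_\beta:\beta\in A'\,\}$ is an intersection of fewer than $j(\kappa)$ sets from the $j(\kappa)$-complete filter $j(\mathcal F)$, hence lies in $j(\mathcal F)$ and is nonempty; any $a\in Y$ then defines a $\kappa$-complete ultrafilter $W=\{Z:a\in j(Z)\}$ extending $\mathcal F$, since $j(\alpha)\in A'$ forces $a\in j(X_\alpha)$. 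Finally $(3)\Rightarrow(1)$ follows by applying $(3)$ to the fine filter on $P_\kappa(\lambda)$ to obtain a fine ultrafilter and reading off the covering property from $[\mathrm{id}]$ exactly as above.

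For the easy direction $(1)\Rightarrow(4)$, I would fix the embedding $j$ and a regular $\delta$ with $\kappa\le\delta\le\lambda$, and cover $j[\delta]$ by some $A'\in M$ with $|A'|^M<j(\kappa)\le j(\delta)$. Since $\delta$ is regular, $j(\delta)$ is regular in $M$, so $A'\cap j(\delta)$ is bounded below $j(\delta)$ and hence $\sup j[\delta]<j(\delta)$. Choosing any $\eta$ with $\sup j[\delta]\le\eta<j(\delta)$ and setting $W=\{X\subseteq\delta:\eta\in j(X)\}$ gives a $\kappa$-complete ultrafilter, and uniformity holds because $\eta\ge j(\beta)$ for every $\beta<\delta$ places $\eta$ into each $j(\delta\setminus\beta)$, so every tail of $\delta$ lies in $W$.

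The hard direction is $(4)\Rightarrow(1)$, and I expect this to be the main obstacle. The goal is to manufacture an embedding together with a single covering set for $j[\lambda]$ out of the supply of $\kappa$-complete uniform ultrafilters on the regular cardinals in $[\kappa,\lambda]$. I would proceed by recursion on $\gamma\le\lambda$, building a commuting continuous system of embeddings $j_\gamma:V\to N_\gamma$ with critical point $\kappa$ together with covering sets $A_\gamma\in N_\gamma$ satisfying $j_\gamma[\gamma]\subseteq A_\gamma$ and $|A_\gamma|^{N_\gamma}<j_\gamma(\kappa)$; the stage $\gamma=\lambda$ then yields $(1)$. Successor steps only require adjoining one ordinal to the cover and are harmless since $\kappa$ is regular. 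The real content is at a limit stage $\gamma$: the naive union of the earlier covers ranges over $|\gamma|$ indices, which will typically exceed $j_\gamma(\kappa)$, so it cannot serve as $A_\gamma$. The device is to set $\delta=\mathrm{cf}(\gamma)$ and, when $\delta\ge\kappa$, take a further ultrapower by the $\kappa$-complete uniform ultrafilter on $\delta$ supplied by $(4)$: uniformity forces the generic point $[\mathrm{id}]$ to sit cofinally in the image of a cofinal sequence $\langle\gamma_i:i<\delta\rangle$, so that a single covering set at that generic stage absorbs the entire cofinal system while retaining $N_\gamma$-size below $j_\gamma(\kappa)$, and cofinalities below $\kappa$ are dispatched directly by $\kappa$-completeness. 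The delicate points, and the reason the hypothesis must range over \emph{all} regular cardinals up to $\lambda$ rather than $\lambda$ alone, are the coherent formation of the direct limits and the verification that the bound $|A_\gamma|^{N_\gamma}<j_\gamma(\kappa)$ survives each of the $\lambda$ stages.
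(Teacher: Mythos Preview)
The paper does not actually prove this theorem: it is stated with a terminal \qed and attributed to the literature (Kanamori for the equivalence of (1)--(3), Ketonen for the equivalence with (4)). So there is no argument in the paper to compare against; one can only assess your sketch on its own terms.

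Your treatment of $(1)\Leftrightarrow(2)\Leftrightarrow(3)$ and of $(1)\Rightarrow(4)$ is correct and is exactly the standard argument. For $(4)\Rightarrow(1)$ you correctly identify the difficulty and the key mechanism---at a limit stage of cofinality $\delta\ge\kappa$, the uniform $\kappa$-complete ultrafilter on $\delta$ is exactly what lets one replace an unbounded family of covers by a single cover of size ${<}j(\kappa)$. One organizational point: the phrase ``commuting \emph{continuous} system'' is in tension with what you then do, since at those very limit stages you follow the direct limit by a further ultrapower, so the system is not literally continuous. It is cleaner (and closer to how Ketonen's theorem is usually presented) to argue by induction on $\lambda$ rather than to build a length-$\lambda$ iteration: at a successor $\lambda=\mu^+$ one combines a fine ultrafilter on $P_\kappa(\mu)$ with a uniform ultrafilter on $\lambda$ via a sum (pushing forward along $(\sigma,\alpha)\mapsto f_\alpha[\sigma]\cup\{\alpha\}$ for chosen surjections $f_\alpha:\mu\to\alpha$), and at a limit $\lambda$ of cofinality $\delta\ge\kappa$ one integrates fine ultrafilters on $P_\kappa(\lambda_i)$ along a uniform ultrafilter on $\delta$. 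This avoids the bookkeeping of direct limits entirely while implementing the same idea you describe.
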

\subsection{Comparisons and the seed order}
\begin{defn}
We say \(j\) is an {\it ultrapower embedding} of \(M\) if \(N = H^N(j[M]\cup \{x\})\) for some \(x\in N\). We say \(j\) is an {\it internal ultrapower embedding} of \(M\) if \(j\) is an ultrapower embedding of \(M\) and \(j\restriction x\in M\) for all \(x\in M\).
\end{defn}

\begin{defn}
If \(P\) is an inner model and \(U\) is a \(P\)-ultrafilter, we denote by \(j_U^P : P\to M_U^P\) the ultrapower of \(P\) by \(U\) using functions in \(P\). For any function \(f\in P\) defined on a set in \(U\), \([f]^P_U\) denotes the point in \(M_U^P\) represented by \(f\).
\end{defn}

Given this terminology, an embedding \(j :M \to N\) is an ultrapower embedding if and only if there is an \(M\)-ultrafilter \(U\) such that \(j = j_U^M\), and an internal ultrapower embedding if and only if \(U\) can be chosen to belong to \(M\). (In this case, any \(M\)-ultrafilter \(U\) such that \(j = j_U^M\) will belong to \(M\).)

\begin{defn}
Suppose \(M_0,M_1,\) and \(N\) are transitive models. We write \[(i_0,i_1) : (M_0,M_1)\to N\] to denote that \(i_0 : M_0\to N\) and \(i_1 : M_1\to N\) are elementary embeddings.
\end{defn}

\begin{defn}
Suppose \(j_0 : V\to M_0\) and \(j_1 : V\to M_1\) are ultrapower embeddings. A pair of internal ultrapower embeddings \((i_0,i_1) : (M_0,M_1)\to N\) is a {\it comparison} of \((j_0,j_1)\) if \(i_0\circ j_0 = i_1 \circ j_1\).
\end{defn}

The notion of a comparison leads to the definitions of the Ultrapower Axiom and the seed order.

\begin{ua}
Every pair of ultrapower embeddings has a comparison.
\end{ua}

\begin{defn}
The {\it seed order} is the relation defined on \(U_0,U_1\in \Un\) by setting \(U_0\swo U_1\) if there is a comparison \((i_0,i_1) : (M_{U_0},M_{U_1})\to N\) of \((j_{U_0},j_{U_1})\) such that \(i_0([\text{id}]_{U_0}) < i_1([\text{id}]_{U_1})\).
\end{defn}

\begin{thm}[UA]
The seed order is a wellorder of \(\Un\).
\end{thm}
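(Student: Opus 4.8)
The plan is to verify the four defining properties of a wellorder---irreflexivity, totality, transitivity, and wellfoundedness---for $\swo$ on the class $\Un$ of countably complete uniform ultrafilters, using throughout that each $U\in\Un$ concentrates on an ordinal, so that its seed $s_U:=[\text{id}]_U$ is an ordinal of $M_U$ and $M_U=H^{M_U}(j_U[V]\cup\{s_U\})$. The basic tool is the recovery identity $U=\{X:s_U\in j_U(X)\}$, which propagates through any comparison: if $(i_0,i_1):(M_{U_0},M_{U_1})\to N$ is a comparison of $(j_{U_0},j_{U_1})$ with common embedding $k=i_0\circ j_{U_0}=i_1\circ j_{U_1}$, then elementarity of $i_\ell$ gives $U_\ell=\{X:i_\ell(s_{U_\ell})\in k(X)\}$. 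Totality is then immediate from UA: a comparison of $(j_{U_0},j_{U_1})$ exists, and its two seeds $i_0(s_{U_0})$, $i_1(s_{U_1})$ are ordinals of $N$, so exactly one of $<$, $=$, $>$ holds. The case $i_0(s_{U_0})=i_1(s_{U_1})$ forces $U_0=U_1$ by the recovery identity, which handles both the diagonal of the trichotomy and irreflexivity.

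The main obstacle is well-definedness: the truth of $U_0\swo U_1$ must not depend on the chosen comparison; equivalently, $\swo$ must be antisymmetric and totality must genuinely be a trichotomy. Given two comparisons $(i_0,i_1):(M_{U_0},M_{U_1})\to N$ and $(i_0',i_1'):(M_{U_0},M_{U_1})\to N'$ with common embeddings $k,k'$, I would apply UA to the pair $(k,k')$ (each is an ultrapower embedding, since a composition of ultrapower embeddings is again one, a pair of seeds being coded as a single element) to obtain $(e,e'):(N,N')\to P$ with $e\circ k=e'\circ k'$. Since $e,e'$ are elementary and order-preserving, it suffices to compare the images of all four seeds inside the single model $P$. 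The crux---and the genuinely hard point---is that $e\circ i_\ell$ and $e'\circ i_\ell'$ must send $s_{U_\ell}$ to the same ordinal of $P$: the recovery identity shows both images induce $U_\ell$ along the common embedding, but a seed is \emph{not} determined by the ultrafilter it induces, so this step requires the uniqueness theory for comparisons under UA developed in \cite{Frechet}, which pins down the relevant factor maps and hence the seed images. Granting it, $e(i_0(s_{U_0}))<e(i_1(s_{U_1}))$ iff $e'(i_0'(s_{U_0}))<e'(i_1'(s_{U_1}))$, yielding well-definedness and antisymmetry. Transitivity follows by the same device: given $U_0\swo U_1\swo U_2$, iterate UA to compare $M_{U_0},M_{U_1},M_{U_2}$ into one model and read off the inequalities transitively among ordinals.

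Finally, for wellfoundedness, suppose toward a contradiction that $\langle U_n:n<\omega\rangle$ satisfies $U_{n+1}\swo U_n$ for all $n$. Iterating UA, I would build a single model $N$ with an embedding $k:V\to N$ and internal ultrapower embeddings $i_n:M_{U_n}\to N$ satisfying $i_n\circ j_{U_n}=k$ for every $n$---concretely, compare $U_0$ with $U_1$, compare the result with $U_2$, and so on, taking the direct limit at stage $\omega$. Because every $U_n$ is countably complete, this $\omega$-indexed direct limit is wellfounded, so $N$ may be taken transitive. For each $n$ the pair $(i_{n+1},i_n)$ is a comparison of $(j_{U_{n+1}},j_{U_n})$, so by well-definedness it witnesses $U_{n+1}\swo U_n$, that is, $i_{n+1}(s_{U_{n+1}})<i_n(s_{U_n})$. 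Then $\langle i_n(s_{U_n}):n<\omega\rangle$ is an infinite descending sequence of ordinals of the wellfounded model $N$, a contradiction. Hence $\swo$ is wellfounded, completing the proof that it is a wellorder of $\Un$.
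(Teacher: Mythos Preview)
The paper does not prove this theorem here; it is cited from \cite{SO}, and the intended method is indicated by the lemma stated a few lines later: the Ketonen order extends the seed order in ZFC, and under UA the two coincide. Wellfoundedness of the Ketonen order is a ZFC fact, so the containment $\swo\subseteq\sE$ immediately yields irreflexivity, antisymmetry, and wellfoundedness of $\swo$; totality then follows from UA by the trichotomy argument you give, and transitivity from totality plus antisymmetry.

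Your direct approach bypasses the Ketonen order, but the step you flag as the crux is a genuine gap that you do not close. Applying UA to $(k,k')$ yields $(e,e')$ with $e\circ k=e'\circ k'$, which only tells you that $e\circ i_\ell$ and $e'\circ i_\ell'$ agree on $j_{U_\ell}[V]$; it says nothing about their values at $s_{U_\ell}$, and as you correctly note, a seed is not determined by the ultrafilter it induces along a given embedding. Your appeal to ``the uniqueness theory for comparisons under UA developed in \cite{Frechet}'' does not resolve this: the relevant pushout and canonical-comparison machinery is in \cite{RF} (see \cref{Pushout} and \cref{Reciprocity1}), not \cite{Frechet}, and more importantly that machinery is built on the translation functions $t_U$, which are defined as $\sE$-least witnesses and hence presuppose that $\sE$---equivalently, under UA, $\swo$---is already known to be a wellorder. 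Invoking it here is circular. The noncircular argument is the containment $\swo\subseteq\sE$: if $(i_0,i_1)$ witnesses $U_0\swo U_1$, let $Z$ be the $M_{U_1}$-ultrafilter derived from $i_1$ using $i_0(s_{U_0})$; then $Z\in\Un^{M_{U_1}}_{\leq[\text{id}]_{U_1}}$ and $U_1^-(Z)=U_0$, so $U_0\sE U_1$. Since your transitivity and wellfoundedness arguments both rest on well-definedness, the whole proof hinges on this unresolved point.
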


We discuss some results from \cite{SO} that single out for any pair of ultrapowers a unique ``optimal" comparison which we call a {\it canonical comparison}.

\begin{defn}
Suppose \(j_0 : V\to M_0\) and \(j_1 : V\to M_1\) are ultrapower embeddings. A comparison \((i_0,i_1) :(M_0,M_1)\to N\) of \((j_0,j_1)\) is
\begin{enumerate}[(1)]
\item {\it minimal} if \(N = H^N(i_0[M_0]\cup i_1[M_1])\)
\item {\it canonical} if for any comparison \((i_0',i_1') : (M_0,M_1)\to N'\), there is an elementary embedding \(h : N \to N'\) such that \(h\circ i_0 = i_0'\) and \(h\circ i_1 = i_1'\).
\item a {\it pushout} if for any comparison \((i_0',i_1') : (M_0,M_1)\to N'\), there is an internal ultrapower embedding \(h : N \to N'\) such that \(h\circ i_0 = i_0'\) and \(h\circ i_1 = i_1'\).
\end{enumerate}
\end{defn}

Under UA, these notions all coincide. The following facts come from \cite{RF} Section 5.

\begin{lma}
A comparison of a pair of ultrapower embeddings is canonical if and only if it is their unique minimal comparison.\qed
\end{lma}

\begin{thm}[UA]\label{Pushout}
Every pair of ultrapower embeddings has a pushout.\qed
\end{thm}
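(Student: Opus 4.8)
The plan is to realize the pushout as the canonical comparison and then to upgrade its factor maps from merely elementary embeddings to internal ultrapower embeddings. First I would fix ultrapower embeddings \(j_0 : V\to M_0\) and \(j_1 : V\to M_1\). By UA they have a comparison, and by the minimalization and uniqueness results of \cite{SO} together with the preceding lemma, they have a \emph{canonical} comparison \((i_0,i_1) : (M_0,M_1)\to N\): one with \(N = H^N(i_0[M_0]\cup i_1[M_1])\) that admits, for every comparison \((i_0',i_1') : (M_0,M_1)\to N'\), an elementary embedding \(h : N\to N'\) satisfying \(h\circ i_0 = i_0'\) and \(h\circ i_1 = i_1'\). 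The entire theorem then reduces to a single assertion: every such factor map \(h\) is an \emph{internal} ultrapower embedding of \(N\).

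That \(h\) is an ultrapower embedding is the easy half. Since \(i_0' : M_0\to N'\) is itself an internal ultrapower embedding, there is a single seed \(x\in N'\) with \(N' = H^{N'}(i_0'[M_0]\cup\{x\})\). Because \(i_0'[M_0] = h[i_0[M_0]]\subseteq h[N]\), this already gives \(N' = H^{N'}(h[N]\cup\{x\})\), so \(h\) is an ultrapower embedding of \(N\) with seed \(x\).

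The main obstacle is internality: I must show \(h\restriction y\in N\) for all \(y\in N\), equivalently that the \(N\)-ultrafilter \(W = \{A\in N : x\in h(A)\}\) which witnesses \(h = j_W^N\) actually belongs to \(N\). The key, which I would isolate as a factoring lemma, is that the comparison maps are internal in an essential way: if \(i_1 : M_1\to N\) and \(i_1' : M_1\to N'\) are internal ultrapower embeddings and \(h : N\to N'\) is elementary with \(h\circ i_1 = i_1'\), then \(h\) is an internal ultrapower embedding of \(N\). To prove this, write \(i_1 = j_{W_1}^{M_1}\) and \(i_1' = j_{W_1'}^{M_1}\) with \(W_1, W_1'\in M_1\), and realize \(N = \text{Ult}(M_1,W_1)\) and \(N' = \text{Ult}(M_1,W_1')\). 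The commutativity \(h\circ j_{W_1}^{M_1} = j_{W_1'}^{M_1}\) exhibits \(N'\) as a further ultrapower of \(N\) by the quotient ultrafilter, and the decisive point is that, since \(W_1'\in M_1\), the datum \(W_1'\) survives into \(N = \text{Ult}(M_1,W_1)\); from \(i_1\) and the \(N\)-copy of \(W_1'\) one can then define \(W\) inside \(N\), yielding \(W\in N\) and hence \(h = j_W^N\). This membership \(W\in N\)---powered precisely by the internality of \(i_1\) and \(i_1'\), i.e.\ by \(W_1, W_1'\) living in the common base \(M_1\)---is the delicate step, and it is exactly the content I would draw from \cite{SO}.

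Granting the factoring lemma, I apply it to the canonical comparison's factor map \(h\) using \(h\circ i_1 = i_1'\) to conclude that \(h\) is internal; combined with the ultrapower property established above, \(h\) is an internal ultrapower embedding, and therefore the canonical comparison satisfies the universal property of a pushout. The one point I would scrutinize most carefully is the verification that the quotient ultrafilter \(W\) genuinely lands in \(N\) rather than merely in \(N'\), as this is where the hypothesis of internality of the comparison maps (as opposed to their being arbitrary elementary embeddings) is indispensable.
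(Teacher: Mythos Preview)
Your factoring lemma is false as stated, and this is a genuine gap. Consider the simplest instance: take \(M_1 = V\) (so ``internal ultrapower embedding of \(M_1\)'' is just ``ultrapower embedding''), let \(U\) be a normal ultrafilter on a measurable \(\kappa\), set \(i_1 = j_U : V\to M_U = N\) and \(i_1' = j_U\circ j_U : V\to (M_U)^{M_U} = N'\), and let \(h = j_U\restriction M_U\). Then \(h\circ i_1 = i_1'\), both \(i_1,i_1'\) are (internal) ultrapower embeddings of \(M_1=V\), and \(h\) is the ultrapower of \(M_U\) by \(U\cap M_U = U\); but \(U\notin M_U\), so \(h\) is \emph{not} internal to \(N\). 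The step that fails in your argument is the clause ``from \(i_1\) and the \(N\)-copy of \(W_1'\) one can then define \(W\) inside \(N\)'': although \(i_1(W_1')\in N\), the embedding \(i_1 : M_1\to N\) is a class of \(M_1\), not of \(N\), and you have no access to it from within \(N\). Knowing \(W_1'\in M_1\) gets you \(i_1(W_1')\in N\), but this is not the ultrafilter you need; the quotient ultrafilter \(W\) giving \(h\) cannot in general be recovered from \(i_1(W_1')\) alone.

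You may be thinking of the lemma quoted later in the paper (from \cite{RF}, Lemma 5.6): if \(i : M\to N\) is internal, \(k : M\to M'\) is an ultrapower embedding, and \(i' : M'\to N\) satisfies \(i = i'\circ k\), then \(k\) is internal. But that lemma shows the \emph{initial} factor of an internal embedding is internal; you are trying to conclude that the \emph{terminal} factor \(h\) in \(i_1' = h\circ i_1\) is internal, which is the opposite direction and simply does not follow from internality of \(i_1,i_1'\) alone. The actual proof (which the paper imports from \cite{RF} rather than giving here) must use more than one side of the comparison---minimality of \(N\) as generated by \(i_0[M_0]\cup i_1[M_1]\) together with amenability of \(h\) to \emph{both} \(M_0\) and \(M_1\), in the spirit of \cref{CanonicalInternal}---rather than a one-sided factoring argument.
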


It follows that every pair of ultrapower embeddings has a unique pushout, a unique canonical comparison, and a unique minimal comparison, and these all coincide. For simplicity, in the context of UA, we will refer to this comparison as {\it the canonical comparison}, and largely avoid using the other terms. We will make use of the following fact:

\begin{thm}\label{CanonicalInternal}
Suppose \(j_0 : V\to M_0\) and \(j_1 : V\to M_1\) are ultrapower embeddings and \((i_0,i_1) : (M_0,M_1)\to N\) is a pushout of \((j_0,j_1)\). Suppose \(k : N\to N'\) is an ultrapower embedding of \(N\). Then the following are equivalent:
\begin{enumerate}[(1)]
\item \(k\) is an internal ultrapower embedding of \(N\)
\item \(k\) is amenable to \(M_0\) and \(M_1\).\qed
\end{enumerate}
\end{thm}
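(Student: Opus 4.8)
The plan is to prove the two implications separately, using the basic structural fact that since \(i_0\) and \(i_1\) are internal ultrapower embeddings, \(N\) is an inner model of both \(M_0\) and \(M_1\); that is, \(N \subseteq M_0 \cap M_1\). (Writing \(i_0 = j_{W_0}^{M_0}\) with \(W_0 \in M_0\), the ultrapower \(N = M_{W_0}^{M_0}\) is definable over \(M_0\) and contained in it, and symmetrically for \(M_1\).) For \((1)\Rightarrow(2)\), I would fix an \(N\)-ultrafilter \(W \in N\) with \(k = j_W^N\), which exists because \(k\) is an internal ultrapower embedding of \(N\). Since \(W \in N \subseteq M_0\), the ultrapower of the inner model \(N\) by \(W\) is absolute to \(M_0\), so the map \(k = j_W^N\) is definable over \(M_0\) from the parameters \(N\) and \(W\). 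Hence for every \(a \in M_0\) with \(a \subseteq N\) the set \(k \restriction a\) is definable over \(M_0\) and so belongs to \(M_0\) by Replacement; this is exactly amenability of \(k\) to \(M_0\), and the same argument gives amenability to \(M_1\).

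For the converse \((2)\Rightarrow(1)\), the key idea is to convert amenability into a genuine comparison and then invoke the universal property of the pushout. First I would check that \(k \circ i_0 : M_0 \to N'\) and \(k \circ i_1 : M_1 \to N'\) are \emph{internal} ultrapower embeddings. That they are ultrapower embeddings is routine: composing the decompositions \(N = H^N(i_0[M_0] \cup \{x\})\) and \(N' = H^{N'}(k[N] \cup \{y\})\) gives \(N' = H^{N'}((k\circ i_0)[M_0] \cup \{\langle k(x), y\rangle\})\), and similarly for \(i_1\). For internality, given \(a \in M_0\) I would write
\[(k \circ i_0)\restriction a = (k \restriction i_0[a]) \circ (i_0 \restriction a).\]
Here \(i_0 \restriction a \in M_0\) because \(i_0\) is internal to \(M_0\), so \(i_0[a] \in M_0\) and \(i_0[a] \subseteq N\); amenability of \(k\) to \(M_0\) then yields \(k \restriction i_0[a] \in M_0\), whence the composite lies in \(M_0\). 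Thus \(k \circ i_0\) is internal to \(M_0\), and symmetrically \(k \circ i_1\) is internal to \(M_1\). Since \(k \circ i_0 \circ j_0 = k \circ j = k \circ i_1 \circ j_1\), the pair \((k \circ i_0, k \circ i_1) : (M_0, M_1) \to N'\) is a comparison of \((j_0, j_1)\).

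Finally, because \((i_0, i_1)\) is a pushout there is an internal ultrapower embedding \(h : N \to N'\) with \(h \circ i_0 = k \circ i_0\) and \(h \circ i_1 = k \circ i_1\); in particular \(h\) and \(k\) agree on \(i_0[M_0] \cup i_1[M_1]\). A pushout is in particular canonical and hence minimal, so \(N = H^N(i_0[M_0] \cup i_1[M_1])\); since \(h\) and \(k\) are both elementary and agree on this generating set, \(h = k\), and as \(h\) is internal so is \(k\). I expect the main obstacle to be the internality computation for \(k \circ i_0\) and \(k \circ i_1\): one must verify that the amenability hypothesis is exactly what is needed to witness that these composites are internal, which is precisely what licenses the application of the pushout's universal property. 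The remaining ingredients — absoluteness of the ultrapower in the forward direction and the minimality argument collapsing \(h\) onto \(k\) — are comparatively formal.
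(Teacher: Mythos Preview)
The paper does not actually prove this theorem; it is stated as a background fact with a trailing \qed, in the same block of material imported from \cite{RF}. So there is no proof in the paper to compare against.

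Your argument is correct. The forward direction is the routine observation that if \(W\in N\) witnesses internality of \(k\), then \(W\in M_0\cap M_1\) since \(N\) is an inner model of each, and the ultrapower of the definable class \(N\) by \(W\) can be carried out inside \(M_0\) (resp.\ \(M_1\)), making \(k\) amenable. For the converse, your key step---showing \((k\circ i_0,k\circ i_1)\) is a comparison by decomposing \((k\circ i_0)\restriction a = (k\restriction i_0[a])\circ(i_0\restriction a)\) and invoking amenability---is exactly right, and the application of the pushout property then produces the internal \(h\). The only point to make explicit is the one you flag yourself: that a pushout is minimal. This holds without UA, since the pushout property trivially implies the canonical property (an internal ultrapower embedding is in particular elementary), and the paper separately records that a canonical comparison is the unique minimal one. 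With minimality, \(N = H^N(i_0[M_0]\cup i_1[M_1])\), and your identification \(h = k\) via agreement on this generating set goes through.
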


\subsection{Limits and the Ketonen order}
We need a slight generalization of the notion of a limit of ultrafilters defined in \cite{SO}. 

\begin{defn}
Suppose \(U\) is a countably complete ultrafilter and \(Z\) is an \(M_U\)-ultrafilter on an ordinal \(\beta\). Then the {\it \(U\)-limit of \(Z\)} is the ultrafilter \[U^-(Z) = \{X\subseteq \alpha : j_U(X)\cap \beta\in Z\}\] where \(\alpha\) is least such that \(j_U(\alpha) \geq \beta\).
\end{defn}

The main difference here is that we do not assume \(Z\in M_U\). For the most part, however, we will only be concerned with the case \(Z\in M_U\).

\begin{defn}
The {\it Ketonen order} is defined on \(U,W\in\Un\) by setting \(U\sE W\) if there is some \(Z\in \Un_{\leq [\text{id}]_W}^{M_W}\) such that \(U = W^-(Z)\).
\end{defn}

The following fact restates the two main theorems of \cite{SO}.

\begin{lma}
The Ketonen order extends the seed order. The two orders coincide if and only if the Ultrapower Axiom holds.
\end{lma}

We will make use of the notion of the {\it translation function} \(t_U\) associated to a countably complete ultrafilter \(U\) under UA:

\begin{defn}[UA]
Suppose \(U\) and \(W\) are uniform countably complete ultrafilters. Then \(\tr U W\) is the \(\sE^{M_U}\)-least \(Z\in \Un^{M_U}\) such that \(U^-(Z) = W\).
\end{defn}

The following trivial bound is worth noting:
\begin{lma}[UA]\label{BoundingLemma}
For any \(U,W\in \Un\), \(\tr U W \E^{M_U} j_U(W)\).
\begin{proof}
Note that \(U^-(j_U(W)) = W\).
\end{proof}
\end{lma}

This bound is interesting given the following theorem, \cite{SO} Lemma 5.6, which relates translation functions to comparisons:
\begin{thm}[UA]\label{Reciprocity1}
Suppose \(U_0,U_1\in \Un\). Let \(M_0 = M_{U_0}\) and \(M_1 = M_{U_1}\). Let \(i_0 = j^{M_0}_{\tr {U_0} {U_1}}\) and \(i_1 = j^{M_1}_{\tr {U_1} {U_0}}\). Then \((i_0,i_1) : (M_{U_0},M_{U_1})\to N\) is the canonical comparison of \((j_{U_0},j_{U_1})\).\qed
\end{thm}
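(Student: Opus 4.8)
The plan is to build the canonical comparison abstractly from \cref{Pushout}, read off its two legs as internal ultrapowers by ultrafilters \(W_0, W_1\), and then identify \(W_0 = \tr{U_0}{U_1}\) and \(W_1 = \tr{U_1}{U_0}\). First I would record the factorization of limits: if \(U\) is countably complete and \(W = U^-(Z)\) for an \(M_U\)-ultrafilter \(Z\) on \(\beta\), then \(k([f]_W) = [j_U(f)\restriction \beta]_Z\) defines an elementary embedding \(k : M_W \to M^{M_U}_Z\) with \(k\circ j_W = j^{M_U}_Z\circ j_U\) and \(k([\text{id}]_W) = [\text{id}]_Z\). That \(k\) is well-defined and elementary is immediate from unwinding the definition of \(U^-(Z)\).

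Next, by \cref{Pushout} let \((c_0,c_1) : (M_{U_0},M_{U_1})\to P\) be the canonical comparison, which as noted above is the unique minimal comparison, so \(P = H^P(c_0[M_{U_0}]\cup c_1[M_{U_1}])\). Since \(M_{U_1} = H^{M_{U_1}}(j_{U_1}[V]\cup\{[\text{id}]_{U_1}\})\) and \(c_0\circ j_{U_0} = c_1\circ j_{U_1}\), minimality gives \(P = H^P(c_0[M_{U_0}]\cup\{c_1([\text{id}]_{U_1})\})\). Hence \(c_0\) is the ultrapower of \(M_{U_0}\) by \(W_0 := \{Y\in M_{U_0} : c_1([\text{id}]_{U_1})\in c_0(Y)\}\), the \(M_{U_0}\)-ultrafilter derived from \(c_0\) using the seed \(c_1([\text{id}]_{U_1})\); symmetrically \(c_1 = j^{M_{U_1}}_{W_1}\) with seed \(c_0([\text{id}]_{U_0})\). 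A direct seed computation then shows \(U_0^-(W_0) = U_1\): for suitable \(X\), \(X\in U_0^-(W_0)\) iff \(c_1([\text{id}]_{U_1})\in (c_0\circ j_{U_0})(X) = c_1(j_{U_1}(X))\), i.e.\ iff \([\text{id}]_{U_1}\in j_{U_1}(X)\), i.e.\ iff \(X\in U_1\); symmetrically \(U_1^-(W_1) = U_0\).

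It then remains to prove the minimality statement \(W_0 = \tr{U_0}{U_1}\) (and symmetrically \(W_1 = \tr{U_1}{U_0}\)). One inequality is immediate: since \(U_0^-(W_0) = U_1\), minimality of the translation function gives \(\tr{U_0}{U_1}\E^{M_{U_0}} W_0\). For the reverse inequality I would set \(Z_0 := \tr{U_0}{U_1}\) and use the factorization lemma to obtain \(k_0 : M_{U_1}\to M^{M_{U_0}}_{Z_0}\) with \(k_0\circ j_{U_1} = j^{M_{U_0}}_{Z_0}\circ j_{U_0}\); granting that \(k_0\) is an \emph{internal} ultrapower embedding of \(M_{U_1}\), the pair \((j^{M_{U_0}}_{Z_0},k_0)\) is a comparison of \((j_{U_0},j_{U_1})\). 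The pushout property of \((c_0,c_1)\) then yields an internal ultrapower embedding \(h\) with \(h\circ c_0 = j^{M_{U_0}}_{Z_0}\), so \(j^{M_{U_0}}_{Z_0} = h\circ j^{M_{U_0}}_{W_0}\) exhibits the ultrapower by \(Z_0\) as a further internal ultrapower of the one by \(W_0\), whence \(W_0\E^{M_{U_0}} Z_0\). Combining the two inequalities, and using that the seed order (equivalently, the Ketonen order) is a wellorder, gives \(W_0 = \tr{U_0}{U_1}\), and symmetrically \(W_1 = \tr{U_1}{U_0}\). Therefore \(i_0 = j^{M_0}_{\tr{U_0}{U_1}} = c_0\) and \(i_1 = j^{M_1}_{\tr{U_1}{U_0}} = c_1\), so \((i_0,i_1) = (c_0,c_1) : (M_{U_0},M_{U_1})\to P\) is the canonical comparison, with common target \(N = P\).

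The main obstacle is the internality of the factor embedding \(k_0\) attached to the translation function. For an arbitrary witness \(Z\) with \(U_0^-(Z) = U_1\) the factor map need not even be an ultrapower embedding, much less an internal one, so this step cannot be purely formal; it is precisely the \(\sE^{M_{U_0}}\)-minimality of \(\tr{U_0}{U_1}\) that should force \(k_0\) to be an internal ultrapower embedding of \(M_{U_1}\) — equivalently, that the \(M_{U_1}\)-ultrafilter derived from \(k_0\) actually lies in \(M_{U_1}\). Pinning this down is where the finer structure theory of the Ketonen order and the Ultrapower Axiom (through the coincidence of \(\sE\) and \(\swo\)) must be used.
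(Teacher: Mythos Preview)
The paper does not supply its own proof of this result; it is quoted from \cite{SO} Lemma 5.6 and closed with a \qed, so there is nothing in the paper to compare against. That said, your outline has a genuine gap, and it is precisely the one you flag at the end: the internality of the factor map \(k_0 : M_{U_1}\to (M_{\tr{U_0}{U_1}})^{M_{U_0}}\) is not a side condition to be ``granted'' but essentially the entire content of the theorem. The \(\sE^{M_{U_0}}\)-minimality of \(\tr{U_0}{U_1}\) by itself buys you nothing here; what makes the argument go is \emph{reciprocity}: one derives from \(k_0\) an \(M_{U_1}\)-ultrafilter \(Z_1'\) with \(U_1^-(Z_1') = U_0\), so \(\tr{U_1}{U_0}\E^{M_{U_1}} Z_1'\), and runs the symmetric construction on the other side; playing the two minimalities off against each other is what forces the two targets to coincide and both factor maps to be internal. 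Until that reciprocal step is carried out you have no comparison \((j^{M_{U_0}}_{\tr{U_0}{U_1}},k_0)\) to feed into the pushout property, and your argument for \(W_0\E^{M_{U_0}} Z_0\) never gets started.

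A smaller point: even granting the gap, the inference ``\(j^{M_{U_0}}_{Z_0} = h\circ j^{M_{U_0}}_{W_0}\), whence \(W_0\E^{M_{U_0}} Z_0\)'' is not valid as written, since a Rudin--Frolik reduction does not in general imply a Ketonen reduction. What you actually have is stronger: \(h([\text{id}]^{M_{U_0}}_{W_0}) = h(c_1([\text{id}]_{U_1})) = k_0([\text{id}]_{U_1}) = [\text{id}]^{M_{U_0}}_{Z_0}\), so \(W_0\) and \(Z_0\) are derived from the same embedding using the same seed and are therefore equal outright. This repairs the final step, but it does not touch the main obstacle.
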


We also use the following fact from \cite{SO} Proposition 5.8:
\begin{prp}[UA]\label{OrderPreserving}
For any \(U\in \Un\), the function \(t_U : (\Un,\sE)\to (\Un,\sE)^{M_U}\) is order preserving. 
\end{prp}

We will need the following fact:
\begin{lma}[UA]\label{kInternal}
Suppose \(D,U\in \Un\) and \(k : M_D\to M_U\) is an elementary embedding such that \(k\circ j_D = j_U\). Then the following are equivalent:
\begin{enumerate}[(1)]
\item \(k\) is an internal ultrapower embedding of \(M_D\).
\item \(\tr U D = P_{k([\textnormal{id}]_D)}^{M_U}\).
\item \(\tr U D\in k[M_D]\).
\end{enumerate}
\begin{proof}
(1) implies (2) by \cref{Reciprocity1} since assuming (1), \((k,\text{id})\) is a comparison of \((j_D,j_U)\) and \(P_{k([\textnormal{id}]_D)}^{M_U}\) is the \(M_U\)-ultrafilter derived from \(\text{id}\) using \(k([\text{id}]_D)\). (2) also easily implies (1). 

(2) implies (3) trivially. We finally show that (3) implies (2). Fix \(Z\in M_D\) such that \(k(Z) = \tr U D\). It is easy to see that \(Z = \tr D D\). We claim \(Z = P^{M_D}_{[\text{id}]_D}\). Otherwise, since \(D^-(P^{M_D}_{[\text{id}]_D}) = D\), we have \(Z\sE P^{M_D}_{[\text{id}]_D}\) in \(M_D\), or in ther words \(Z\in \Un_{\leq [\text{id}]_D}^{M_D}\). But the existence of such a \(Z\) implies \(D\sE D\), which contradicts that \(\sE\) is a strict partial order. Hence \(Z = P^{M_D}_{[\text{id}]_D}\). It follows that \(\tr U D = k(Z) = P_{k([\textnormal{id}]_D)}^{M_U}\).
\end{proof}
\end{lma}

\section{Ketonen embeddings}
In order to prove the supercompactness of the least strongly compact cardinal, one must somehow define elementary embeddings witnessing its supercompactness. Of course this cannot be done in ZFC alone, and at first it is hard to see how UA will help. In this section, we define ultrafilters \(U_\delta\) that will give rise to embeddings roughly witnessing \(\delta\)-supercompactness. It turns out that these ultrafilters {\it were} in a sense first isolated in the ZFC context by Ketonen \cite{Ketonen}. We begin by expositing some of Ketonen's work. 

\subsection{Ketonen embeddings}
Here we use the wellfoundedness of the Ketonen order to identify certain minimal ultrafilters which we call {\it Ketonen ultrafilters}. These ultrafilters will witness (approximately) the \(\delta\)-supercompactness of the least cardinal \(\kappa\) that is strongly compact to \(\delta\).
\begin{defn}
Suppose \(\delta\) is a regular uncountable cardinal. An elementary embedding \(j :  V\to M\) is a {\it Ketonen embedding} at \(\delta\) if the following hold where \(\delta_* = \sup j[\delta]\):
\begin{enumerate}[(1)]
\item \(M = H^{M}(j[V]\cup \{\delta_*\})\)
\item \(\delta_*\) is not tail uniform in \(M\).
\end{enumerate}
A uniform ultrafilter \(U\) is {\it Ketonen} at \(\delta\) if it is the ultrafilter derived from a Ketonen embedding \(j\) at \(\delta\) using \(\sup j[\delta]\).
\end{defn}

Thus the ultrapower embedding associated to a Ketonen ultrafilter is a Ketonen embedding. It is easy to see that if \(U\) is a Ketonen ultrafilter and \(\delta\) is uniform, then \(U\) is a uniform ultrafilter on \(\delta\). Otherwise \(U = P_\delta\) is a uniform ultrafilter on \(P_{\delta+1}\).

There is a simple combinatorial characterization of Ketonen ultrafilters:
\begin{lma}\label{KetonenCombinatorial}
A countably complete ultrafilter \(U\) is Ketonen at the regular cardinal \(\delta\) if and only if the following hold:
\begin{enumerate}[(1)]
\item Every regressive function on \(\textsc{sp}(U)\) is bounded on a \(U\)-large set.
\item \(\{\alpha < \textsc{sp}(U): \alpha\text{ is not tail uniform}\}\in U\).\qed
\end{enumerate}
\end{lma}
Because it is convenient, we allow that when \(\delta\) is a regular cardinal that is not tail uniform, the identity is Ketonen at \(\delta\), and therefore the uniform principal ultrafilter on \(\delta+ 1\) is Ketonen at \(\delta\).
\begin{thm}[Ketonen]\label{KetonenMinimal}
Suppose \(\delta\) is a regular cardinal and \(U\) is a uniform countably complete ultrafilter. Then the following are equivalent:
\begin{enumerate}[(1)]
\item \(U\) is Ketonen at \(\delta\).
\item \(U\) is an \(\sE\)-minimal element of \(\Un\setminus \Un_{<\delta}\).
\end{enumerate}
\begin{proof}
We first show (1) implies (2). Assume (1) and fix \(W\sE U\). Let \(\delta_* = [\text{id}]_U = \sup j_U[\delta]\). Then there is some \(W'\in \Un^{M_U}_{\leq\delta_*}\) such that \(W = U^-(W')\). Since \(U\) is zero order, \(\Un^{M_U}_{\delta_*} = \emptyset\), so \(W'\in \Un^{M_U}_{<\delta_*}\). It follows easily that \(W \in \Un_{<\delta}\). Since any \(W\sE U\) belongs to \(\Un_{<\delta}\), (2) holds.

We now show (2) implies (1). Assume (2). Let \(\delta_* = \sup j_U[\delta]\). Assume towards a contradiction that in \(M_U\), there is a uniform countable complete ultrafilter \(W'\) such that \(\delta_* \leq \textsc{sp}(W') \leq [\text{id}]_U\). Let \(\alpha\) be least such that \(j_U(\alpha) \geq \textsc{sp}(W')\), so \(\alpha\geq \delta\). Let \(W = U^-(W')\). Then \(W \sE U\). This contradicts that \(U\) is \(\sE\)-minimal. It follows that in \(M_U\), there is no uniform countable complete ultrafilter \(W'\) such that \(\delta_* \leq \textsc{sp}(W') \leq [\text{id}]_U\). It follows in particular that \(\Un_{\delta_*}^{M_U}= \emptyset\). Since the principal ultrafilter \(P_{\delta_*}\) is a uniform countably complete ultrafilter on \(\delta_*+1\), it follows that \([\text{id}]_U < \delta_* + 1\), or in other words \(\delta_* = [\text{id}]_U\).  That is, \(U\) is Ketonen.
\end{proof}
\end{thm}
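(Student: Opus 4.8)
The plan is to work at the level of the ultrapower $j_U : V\to M_U$, using the reformulation of Ketonen-ness that drops out of the definition: writing $\delta_* = \sup j_U[\delta]$, the ultrafilter $U$ is Ketonen at $\delta$ if and only if $[\text{id}]_U = \delta_*$ and $\delta_*$ is not tail uniform in $M_U$, i.e.\ $\Un_{\delta_*}^{M_U} = \emptyset$. Indeed, $U$ is always the ultrafilter derived from $j_U$ using $[\text{id}]_U$, so clause~(1) of the Ketonen embedding (that $M_U = H^{M_U}(j_U[V]\cup\{\delta_*\})$) holds automatically once $[\text{id}]_U = \delta_*$; and by \L o\'s's theorem the requirement in \cref{KetonenCombinatorial} that every regressive function be bounded on a $U$-large set is exactly what pins $[\text{id}]_U$ down to $\delta_* = \sup j_U[\delta]$, the inequality $[\text{id}]_U \geq \delta_*$ being automatic from uniformity. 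With this dictionary, both directions reduce to computing the space of a $U$-limit, governed by the least $\alpha$ with $j_U(\alpha)\geq\textsc{sp}(W')$.

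For $(1)\Rightarrow(2)$, I would fix an arbitrary $W \sE U$ and show $W \in \Un_{<\delta}$. By the definition of the Ketonen order, $W = U^-(W')$ for some $W'\in \Un^{M_U}_{\leq [\text{id}]_U}$. Ketonen-ness gives $[\text{id}]_U = \delta_*$ and $\Un_{\delta_*}^{M_U} = \emptyset$, so in fact $\beta := \textsc{sp}(W') < \delta_*$. Letting $\alpha$ be least with $j_U(\alpha) \geq \beta$, the inequality $\beta < \delta_* = \sup j_U[\delta]$ produces some $\gamma < \delta$ with $j_U(\gamma) > \beta$, whence $\alpha \leq \gamma < \delta$; as the least choice of $\alpha$ makes $U^-(W')$ uniform on $\alpha$, we get $\textsc{sp}(W) = \alpha < \delta$. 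Thus every $\sE$-predecessor of $U$ lands in $\Un_{<\delta}$, and since $\textsc{sp}(U) \geq \delta$ the ultrafilter $U$ is $\sE$-minimal in $\Un\setminus\Un_{<\delta}$.

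For $(2)\Rightarrow(1)$, assuming $\sE$-minimality, I would rule out any uniform countably complete $W'\in M_U$ with $\delta_* \leq \textsc{sp}(W') \leq [\text{id}]_U$. Given such a $W'$, the limit $W = U^-(W')$ satisfies $W \sE U$, but now the threshold comparison runs the other way: if $\alpha$ is least with $j_U(\alpha) \geq \textsc{sp}(W') \geq \delta_* = \sup j_U[\delta]$, then $\alpha \geq \delta$, since $j_U(\xi) < \delta_*$ for every $\xi < \delta$. Hence $\textsc{sp}(W) = \alpha \geq \delta$, so $W \in \Un\setminus\Un_{<\delta}$, contradicting minimality. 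No such $W'$ therefore exists. Taking $\textsc{sp}(W') = \delta_*$ yields $\Un_{\delta_*}^{M_U} = \emptyset$, i.e.\ $\delta_*$ is not tail uniform in $M_U$. To pin the value of $[\text{id}]_U$ down to $\delta_*$ exactly, I would feed in the principal ultrafilter $P_{\delta_*}^{M_U}$, a uniform countably complete ultrafilter of $M_U$ on $\delta_*+1$: were $[\text{id}]_U > \delta_*$, we would have $\delta_* \leq \textsc{sp}(P_{\delta_*}^{M_U}) = \delta_*+1 \leq [\text{id}]_U$, again a forbidden $W'$. So $[\text{id}]_U \leq \delta_*$, and with the reverse inequality automatic, $[\text{id}]_U = \delta_*$. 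Both clauses of the reformulation now hold, so $U$ is Ketonen at $\delta$.

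The step I expect to be the main obstacle is the bookkeeping that transports the space of $W'$ inside $M_U$ to the space of the limit $U^-(W')$ back in $V$ through the least $\alpha$ with $j_U(\alpha)\geq \textsc{sp}(W')$, and in particular getting the strict-versus-nonstrict inequalities exactly right at the threshold $\delta_*$. In the first direction $\textsc{sp}(W') < \delta_*$ must push $\alpha$ strictly below $\delta$, while in the second $\textsc{sp}(W') \geq \delta_*$ must push $\alpha$ up to at least $\delta$; the borderline case $\textsc{sp}(W') = \delta_*$ is precisely what forces the use of $P_{\delta_*}^{M_U}$ to separate $[\text{id}]_U = \delta_*$ from $[\text{id}]_U > \delta_*$. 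Regularity of $\delta$ underlies the combinatorial characterization of \cref{KetonenCombinatorial} on which the whole reformulation rests, and guarantees that $\sup j_U[\delta]$ is approached but never attained along $j_U[\delta]$, keeping these threshold comparisons clean.
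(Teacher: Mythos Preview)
Your proof is correct and follows essentially the same approach as the paper's: both directions hinge on computing the space of a $U$-limit via the least $\alpha$ with $j_U(\alpha)\geq \textsc{sp}(W')$, and the $(2)\Rightarrow(1)$ direction uses the principal ultrafilter $P_{\delta_*}$ to force $[\text{id}]_U = \delta_*$ exactly as you do. Your write-up simply unpacks the threshold bookkeeping (why $\textsc{sp}(W')<\delta_*$ gives $\alpha<\delta$ and $\textsc{sp}(W')\geq\delta_*$ gives $\alpha\geq\delta$) more explicitly than the paper, which leaves these as ``easy''.
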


\begin{cor}\label{KetonenExistence}
For every regular cardinal \(\delta\), there is a Ketonen ultrafilter at \(\delta\).
\begin{proof}
This is immediate from the wellfoundedness of the seed order.
\end{proof}
\end{cor}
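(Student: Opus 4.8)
The plan is to derive the corollary directly from Ketonen's characterization, \cref{KetonenMinimal}: an ultrafilter is Ketonen at \(\delta\) exactly when it is an \(\sE\)-minimal element of \(\Un\setminus\Un_{<\delta}\). Thus it suffices to exhibit such a minimal element, and the only inputs needed are that this class is nonempty and that the Ketonen order is wellfounded.

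First I would verify nonemptiness. The uniform principal ultrafilter \(P_\delta\) on \(\delta+1\) is countably complete and uniform, and \(\textsc{sp}(P_\delta)=\delta+1\geq\delta\), so \(P_\delta\in\Un\setminus\Un_{<\delta}\). (This is precisely the witness the stated convention isolates in the case that \(\delta\) is not tail uniform.) Then I would invoke the wellfoundedness of the Ketonen order on \(\Un\) from \cite{SO}---the seed order, which \(\sE\) extends, being wellfounded as well---to conclude that the nonempty class \(\Un\setminus\Un_{<\delta}\) has an \(\sE\)-minimal element \(U\). By \cref{KetonenMinimal}, \(U\) is Ketonen at \(\delta\), as desired.

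The one point meriting care is that \(\Un\setminus\Un_{<\delta}\) is in general a proper class: if measurable cardinals occur cofinally in the ordinals, there are uniform countably complete ultrafilters of unboundedly large space. Extracting a minimal element from a proper class therefore uses not merely that \(\sE\) has no infinite descending sequences but that it is set-like, so that every nonempty subclass has an \(\sE\)-minimal element. I expect this---rather than any substantive new construction---to be the main thing to check, and it is exactly the content of the wellfoundedness of the Ketonen order understood as a class relation.
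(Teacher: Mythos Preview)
Your proof is correct and follows essentially the same approach as the paper: invoke wellfoundedness to extract an \(\sE\)-minimal element of \(\Un\setminus\Un_{<\delta}\) and apply \cref{KetonenMinimal}. Your explicit verification of nonemptiness via \(P_\delta\) and your remark on set-likeness are helpful elaborations, and your care to cite wellfoundedness of the Ketonen order \(\sE\) (rather than the seed order, as the paper's one-line proof says) is in fact the more precise formulation, since \cref{KetonenMinimal} is stated for \(\sE\).
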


We end this section with Ketonen's remarkable theorem on the covering properties of Ketonen ultrapowers.

\begin{defn}
Suppose \(M\) is an inner model, \(\lambda\) is a cardinal, and \(\lambda'\) is an \(M\)-cardinal. We say \(M\) has the {\it \((\lambda,\lambda')\)-covering property} if every set \(A\subseteq M\) with \(|A|\leq \lambda\) is contained in a set \(A'\in M\) with \(|A'|^{M} \leq \lambda'\). We say \(M\) has the {\it \((\lambda,{<}\lambda')\)-covering property}  if every set \(A\subseteq M\) with \(|A|\leq \lambda\) is contained in a set \(A'\in M\) with \(|A'|^{M} < \lambda'\). 
\end{defn}

\begin{lma}\label{EACov}
Suppose \(j : V\to M\) is an elementary embedding. Suppose \(\lambda\) is a cardinal and \(\lambda'\) is an \(M\)-cardinal. Assume there exist sets \(X\) and \(X'\) such that \(|X| = \lambda\), \(X'\in M\), \(|X'|^M = \lambda'\), and \(j[X]\subseteq X'\). Then for any set \(Y\) with \(|Y| \leq \lambda\), \(j[Y]\) is contained in a set \(Y'\in M\) with \(|Y'|^M \leq \lambda'\).
\begin{proof}
Fix a surjection \(f : X\to Y\). Let \(Y' = j(f)[X']\). Then \(Y'\in M\) and since \(j(f)\) is a surjection from \(X'\) to \(Y'\) in \(M\), \(|Y'|^M \leq |X'|^M = \lambda'\). Finally \(j[Y] = j[f[X]] = j(f)[j[X]]\subseteq j(f)[X'] = Y'\). 
\end{proof}
\end{lma}

The following lemma is a standard property of ultrapower embeddings (and in fact it holds in a bit more generality).

\begin{lma}\label{UltCov}
Suppose \(j : V\to M\) is an ultrapower embedding. Suppose \(\lambda\) is a cardinal and \(\lambda'\) is an \(M\)-cardinal. Assume there exist sets \(X\) and \(X'\) such that \(|X| = \lambda\), \(X'\in M\), \(|X'|^M = \lambda'\), and \(j[X]\subseteq X'\).  Then \(M\) has the \((\lambda,\lambda')\)-covering property.
\begin{proof}
Fix a set \(A\subseteq M\) with \(|A|\leq \lambda\). Fix \(e\in M\) such that \(M = H^M(j[V]\cup \{e\})\). Fix a set of functions \(Y\) such that \(|Y| = \lambda\) and \(A = \{j(f)(e) : f\in Y\}\). By \cref{EACov}, let \(Y'\in M\) be a set of functions with \(|Y'|^M \leq \lambda'\) and \(j[Y]\subseteq Y'\). Then \(A\subseteq \{g(e) : g\in Y'\wedge e\in \text{dom}(g)\}\). Therefore let \(A' =  \{g(e) : g\in Y'\wedge e\in \text{dom}(g)\}\). Then \(A'\in M\) contains \(A\) and \(|A'|^M \leq |A'|^M \leq\lambda'\). This proves the lemma.
\end{proof}
\end{lma}

\begin{cor}[Ketonen]\label{KetonenCov}
Suppose \(j : V\to M\) is an ultrapower embedding and \(\delta\) is a regular cardinal. Let \(\delta' = \textnormal{cf}^M(\sup j[\delta])\). Then \(M\) has the \((\delta,\delta')\)-covering property.
\begin{proof}
It suffices by \cref{UltCov} to show that there exist sets \(X\) and \(X'\) such that \(|X| = \delta\), \(X'\in M\), \(|X'|^M = \delta'\), and \(j[X]\subseteq X'\). Let \(X'\in M\) be a closed unbounded subset of \(\sup j[\delta]\) such that the ordertype of \(X'\) is \(\delta'\). Let \(X = j^{-1}[X']\). Then since \(j\) is continuous at ordinals of cofinality \(\omega\), \(j^{-1}[X']\) is \(\omega\)-closed unbounded in \(\delta\). Since \(\delta\) is regular, \(|X| = \delta\). Therefore \(X\) and \(X'\) are as desired.
\end{proof}
\end{cor}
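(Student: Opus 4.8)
The statement to prove is Corollary~\ref{KetonenCov}: for an ultrapower embedding $j : V\to M$ and a regular cardinal $\delta$, setting $\delta' = \textnormal{cf}^M(\sup j[\delta])$, the model $M$ has the $(\delta,\delta')$-covering property. The plan is to reduce this to the general covering lemma \cref{UltCov}, which says that covering follows once we exhibit witnessing sets $X$ and $X'$ with $|X| = \delta$, $X'\in M$, $|X'|^M = \delta'$, and $j[X]\subseteq X'$. So the entire task is to produce these two sets; \cref{UltCov} then does the rest of the work for free.

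**Constructing the witnesses.** First I would work inside $M$ to choose a set $X'$ that is a closed unbounded subset of $\delta_* := \sup j[\delta]$ having ordertype exactly $\delta' = \textnormal{cf}^M(\delta_*)$; such a club exists in $M$ precisely because $\delta'$ is the cofinality of $\delta_*$ as computed in $M$. This gives $X'\in M$ with $|X'|^M = \delta'$. Then I would set $X = j^{-1}[X']$, the preimage of $X'$ under $j$. By construction $j[X]\subseteq X'$ is automatic, so the containment hypothesis of \cref{UltCov} holds immediately.

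**The cardinality of $X$.** The crux is verifying $|X| = \delta$, and this is where the regularity of $\delta$ and the continuity behavior of $j$ enter. The key observation is that $j$ is continuous at ordinals of cofinality $\omega$ (a standard property of ultrapower embeddings, stemming from countable completeness of the derived ultrafilter). Because $X'$ is closed in $\delta_*$ and $j$ is $\omega$-continuous, the preimage $X = j^{-1}[X']$ is $\omega$-closed and unbounded in $\delta$: it is cofinal since $X'$ is cofinal in $\delta_* = \sup j[\delta]$, and it is closed under suprema of countable increasing sequences because such suprema are sent by $j$ to the corresponding suprema in $M$, which land back in the closed set $X'$. An $\omega$-club in a regular cardinal $\delta > \omega$ has cardinality $\delta$, so $|X| = \delta$ as required.

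**Main obstacle.** The only subtle point — and the step I would single out as the heart of the argument — is the claim that $j^{-1}[X']$ is unbounded and $\omega$-closed in $\delta$, which rests entirely on the $\omega$-continuity of $j$. Once that continuity is invoked correctly, the cardinality count $|X| = \delta$ follows from regularity of $\delta$, and feeding $X,X'$ into \cref{UltCov} finishes the proof. No genuinely hard new idea is needed beyond marshalling \cref{UltCov} together with the standard continuity fact; the argument is essentially a clean application of the covering machinery to a carefully chosen club.
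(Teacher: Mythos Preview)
Your proposal is correct and follows essentially the same approach as the paper: both reduce to \cref{UltCov} by taking $X'\in M$ to be a club in $\sup j[\delta]$ of ordertype $\delta'$, setting $X=j^{-1}[X']$, and using $\omega$-continuity of $j$ to conclude that $X$ is $\omega$-closed unbounded in $\delta$, hence of size $\delta$. If anything, you spell out the $\omega$-closure step in slightly more detail than the paper does.
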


\begin{thm}[Ketonen]\label{KetonenRegularity}
Suppose \(\delta\) is a regular cardinal and \(j : V\to M\) is a Ketonen embedding at \(\delta\). Suppose \(\gamma \leq \delta\) is such that every regular cardinal in the interval \([\gamma,\delta]\) is uniform. Then \(M\) has the \((\delta,{<}j(\gamma))\)-covering property.
\begin{proof}
Let \(\delta_* = \sup j[\delta]\). Let \(\delta' = \text{cf}^M(\delta_*)\).  By \cref{KetonenCov}, it suffices to show that \(\delta' < j(\gamma)\). By the elementarity of \(j\), every \(M\)-regular cardinal in the interval \([j(\gamma), j(\delta)]\) is Fr\'echet. On the other hand, by the definition of a Ketonen embedding, \(\delta_*\) is not tail uniform in \(M\), and therefore by \cref{CofUniform2}, \(\delta'\) is not Fr\'echet in \(M\). Since \(\delta'\) is an \(M\)-regular cardinal, it follows that \(\delta'\notin [j(\gamma),j(\delta)]\). Since \(\delta' \leq j(\delta)\), it follows that \(\delta' < j(\gamma)\), as desired.
\end{proof}
\end{thm}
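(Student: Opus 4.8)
The plan is to combine the covering property from Ketonen's earlier result (\cref{KetonenCov}) with the defining property of a Ketonen embedding, namely that $\delta_* = \sup j[\delta]$ is not tail uniform in $M$. The target is to show that $M$ has the $(\delta, {<}j(\gamma))$-covering property, which by \cref{KetonenCov} reduces entirely to controlling the single $M$-cardinal $\delta' = \mathrm{cf}^M(\delta_*)$: it suffices to prove $\delta' < j(\gamma)$.

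\emph{First} I would set $\delta_* = \sup j[\delta]$ and $\delta' = \mathrm{cf}^M(\delta_*)$, and invoke \cref{KetonenCov} to record that $M$ already has the $(\delta,\delta')$-covering property. So the whole problem becomes an inequality $\delta' < j(\gamma)$, and everything downstream is packaging. \emph{Next}, I would translate the hypothesis ``every regular cardinal in $[\gamma,\delta]$ is uniform'' across the embedding $j$: by elementarity, every $M$-regular cardinal in the interval $[j(\gamma), j(\delta)]$ is Fr\'echet (uniform) \emph{in $M$}. This is the positive half of the argument, giving a band of cardinals on which uniformity is guaranteed inside $M$.

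\emph{The crux} is then the negative half: I would extract non-uniformity of $\delta'$ from the Ketonen condition. By definition $\delta_*$ is not tail uniform in $M$, and \cref{CofUniform2} (an ordinal is tail uniform iff its cofinality is uniform) applied inside $M$ says that $\delta' = \mathrm{cf}^M(\delta_*)$ is not uniform in $M$; since $\delta'$ is an $M$-regular cardinal, not uniform means not Fr\'echet in $M$. Now the two halves collide: $\delta'$ is an $M$-regular cardinal that is \emph{not} Fr\'echet in $M$, yet every $M$-regular cardinal in $[j(\gamma),j(\delta)]$ \emph{is} Fr\'echet. Therefore $\delta' \notin [j(\gamma), j(\delta)]$. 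Finally, $\delta_* \leq j(\delta)$ forces $\delta' \leq j(\delta)$, so the only way to escape the interval is $\delta' < j(\gamma)$, which is exactly what was needed.

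\emph{The main obstacle} I anticipate is purely one of bookkeeping at the two endpoints of the interval rather than any deep difficulty: one must be careful that \cref{CofUniform2} is being applied internally to $M$ (so the notions of ``tail uniform'' and ``Fr\'echet'' are computed in $M$, matching the $M$-version of \cref{RegTailFrec}), and one must confirm the boundary cases $\delta' = j(\gamma)$ and $\delta' = j(\delta)$ are correctly excluded. The inclusion $\delta' \leq j(\delta)$ holds because $\delta' = \mathrm{cf}^M(\delta_*)$ and $\delta_* = \sup j[\delta] \leq j(\delta)$, so $\delta'$ cannot exceed $j(\delta)$; and the strictness $\delta' < j(\gamma)$ (rather than $\leq$) is immediate once $\delta'$ is known to lie strictly below the Fr\'echet band. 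No serious combinatorics is required beyond the two cited propositions and the elementarity transfer of the hypothesis.
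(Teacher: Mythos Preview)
Your proposal is correct and follows essentially the same route as the paper's own proof: reduce via \cref{KetonenCov} to showing $\delta' < j(\gamma)$, transfer the uniformity hypothesis to $[j(\gamma),j(\delta)]$ by elementarity, use the Ketonen condition together with \cref{CofUniform2} to see $\delta'$ is not Fr\'echet in $M$, and conclude $\delta' < j(\gamma)$ since $\delta' \le j(\delta)$. Your bookkeeping remarks about applying \cref{CofUniform2} internally to $M$ and checking the endpoints are exactly the points the paper leaves implicit.
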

\subsection{The universal property of Ketonen embeddings}
Since the Ultrapower Axiom implies the MO of the Ketonen order, the following theorem is an immediate consequence of \cref{KetonenMinimal}:
\begin{thm}[UA]
For any regular cardinal \(\delta\), there is a unique Ketonen ultrafilter at \(\delta\).
\end{thm}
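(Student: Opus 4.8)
The plan is to combine the existence result \cref{KetonenExistence} with Ketonen's minimality characterization \cref{KetonenMinimal}, using the fact that UA forces the Ketonen order to be a wellorder. Since \cref{KetonenExistence} already delivers existence outright (it is proved in ZFC from wellfoundedness of the seed order), the entire remaining task is uniqueness, and this should reduce to a one-line consequence of linearity of \(\sE\).

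First I would record that under UA the Ketonen order \(\sE\) is a wellorder of \(\Un\). This follows from two earlier facts: the lemma stating that \(\sE\) extends the seed order and that the two orders coincide exactly when UA holds, together with the theorem that the seed order is a wellorder of \(\Un\) under UA. Hence under UA we have \(\sE\) equal to the seed order, and therefore \(\sE\) wellorders \(\Un\).

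Next I would invoke \cref{KetonenMinimal}, which says that a uniform countably complete ultrafilter is Ketonen at \(\delta\) precisely when it is an \(\sE\)-minimal element of \(\Un\setminus \Un_{<\delta}\). The class \(\Un\setminus \Un_{<\delta}\) is nonempty: indeed \cref{KetonenExistence} supplies a Ketonen ultrafilter at \(\delta\), which by \cref{KetonenMinimal} lies in this class. Since \(\sE\) wellorders \(\Un\), every nonempty subclass has an \(\sE\)-least element, and in a linear order the least element is the unique minimal element. Thus \(\Un\setminus \Un_{<\delta}\) has exactly one \(\sE\)-minimal element, and by \cref{KetonenMinimal} this is the unique Ketonen ultrafilter at \(\delta\).

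There is essentially no obstacle here, as the content is carried entirely by the cited results; the theorem is genuinely ``immediate'' once the wellordering of \(\sE\) is in hand. The only point deserving any care is that \(\Un\) is a proper class rather than a set, so one should be slightly careful to use the form of wellordering that guarantees a least element for every nonempty \emph{subclass}, not merely for subsets. This is immediate from the definition of a class wellordering, so it does not cause any real difficulty.
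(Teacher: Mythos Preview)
Your proposal is correct and is exactly the paper's approach: the paper declares this theorem ``an immediate consequence of \cref{KetonenMinimal}'' given that UA makes the Ketonen order linear, which is precisely the argument you spell out. (The paper later reproves uniqueness as \cref{KetonenUnique} via the universal property and a category-theoretic back-and-forth, but for the theorem as stated here your argument matches the paper's.)
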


But in fact Ketonen ultrafilters are minimum ultrafilters in a second way that we now describe. We first prove the key universal property that characterizes Ketonen embeddings under UA.
\begin{defn}
Suppose \(\delta\) is a regular uncountable cardinal. An ultrapower embedding \(j : V\to M\) is {\it zero order} at \(\delta\) if \(\sup j[\delta]\) is not tail uniform in \(N\).
\end{defn}
Thus a Ketonen embedding is a zero order ultrapower \(j\) with the additional property that \(M\) is generated by \(j[V]\cup \{\sup j[\delta]\}\).
\begin{thm}[UA; Universal Property of Ketonen embeddings]\label{KetonenUniversality}
Suppose \(\delta\) is regular, \(j : V\to M\) is Ketonen at \(\delta\), and \(j' : V\to M'\) is zero order at \(\delta\). Then there is an internal ultrapower embedding \(k : M\to M'\) such that \(j' = k\circ j\).
\begin{proof}
Let \((i,i') : (M,M')\to N\) be a comparison of \((j,j')\). Since \(j\) and \(j'\) are zero order, \(i\) and \(i'\) are continuous at \(\sup j[\delta]\) and \(\sup j'[\delta]\) respectively, so \[i(\sup j[\delta]) = \sup i \circ j[\delta] = \sup i'\circ j'[\delta] = i'(\sup j'[\delta])\] Since \(i\circ j = i'\circ j'\), \(i[j[V]] = i'[j'[V]]\subseteq i'[M']\). 

Since \(j\) is Ketonen, \(M = H^M(j[V]\cup \{\sup j[\delta]\})\), so since \(j[V]\cup \{\sup j[\delta]\}\subseteq i'[M']\), \(i[M]\subseteq i'[M']\). Define \(k : M\to M'\) by \[k = (i')^{-1}\circ i\]
Since \(i'\circ j' = i\circ j\), \(j' = (i')^{-1}\circ i\circ j = k \circ j\).

We must check that \(k\) is an internal ultrapower embedding of \(M\). To see that \(k\) is an ultrapower embedding of \(M\), we use the following trivial lemma (\cite{RF}, Lemma 5.14):
\begin{lma}
Suppose \(j : V\to M\) is an elementary embedding, \(j' : V\to M'\) is an ultrapower embedding, and \(k : M\to M'\) is an elementary embedding with \(k\circ j = j'\). Then \(k\) is an ultrapower embedding of \(M\).\qed
\end{lma}
To conclude that \(k\) is an internal ultrapower embedding of \(M\), we now cite another lemma, \cite{RF} Lemma 5.6:
\begin{lma}
Suppose \(i : M \to N\) is an internal ultrapower embedding, \(k : M\to M'\) is an ultrapower embedding, and \(i' : M'\to N\) is an elementary embedding such that \(i = i'\circ k\). Then \(k\) is an internal ultrapower embedding of \(M\).\qed
\end{lma}
We have produced an internal ultrapower embedding \(k : M \to M'\) such that \(j' = k\circ j\), which proves the theorem.
\end{proof}
\end{thm}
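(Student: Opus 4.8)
The plan is to produce \(k\) by comparing \(j\) and \(j'\) and then factoring one leg of the comparison through the other. By the Ultrapower Axiom, fix a comparison \((i,i') : (M,M')\to N\) of \((j,j')\); thus \(i,i'\) are internal ultrapower embeddings and \(i\circ j = i'\circ j'\). The target is to show \(i[M]\subseteq i'[M']\), for then \(k = (i')^{-1}\circ i : M\to M'\) is well defined, satisfies \(k\circ j = j'\) and \(i'\circ k = i\), and its being an internal ultrapower embedding falls out of standard factoring lemmas about ultrapower embeddings.

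The crucial step is the single equation
\[i(\sup j[\delta]) = i'(\sup j'[\delta]),\]
which I would obtain from continuity. Write \(\delta_* = \sup j[\delta]\). Since \(j\) is zero order at \(\delta\), \(\delta_*\) is not tail uniform in \(M\). As \(i : M\to N\) is an internal ultrapower embedding, a discontinuity of \(i\) at \(\delta_*\) would, by the embedding characterization of tail uniformity (\cref{TailUnifChar} applied inside \(M\), together with \cref{CofUniform2}), witness that \(\delta_*\) \emph{is} tail uniform in \(M\) --- a contradiction. Hence \(i\) is continuous at \(\delta_*\), so \(i(\delta_*) = \sup i\circ j[\delta]\); symmetrically \(i'(\sup j'[\delta]) = \sup i'\circ j'[\delta]\). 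These two suprema coincide because \(i\circ j = i'\circ j'\), which gives the display.

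I would then invoke the generation property of Ketonen embeddings: \(M = H^M(j[V]\cup\{\delta_*\})\). Applying \(i\) and using elementarity, \(i[M]\subseteq H^N(i[j[V]]\cup\{i(\delta_*)\})\). Now \(i[j[V]] = i'[j'[V]]\subseteq i'[M']\) since \(i\circ j = i'\circ j'\), and \(i(\delta_*) = i'(\sup j'[\delta])\in i'[M']\) by the key equation. Because \(i'[M']\) is an elementary substructure of \(N\), it is closed under Skolem functions, so \(H^N(i'[M']) = i'[M']\) and therefore \(i[M]\subseteq i'[M']\), as required. This legitimizes the definition \(k = (i')^{-1}\circ i\).

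Finally I would check that \(k\) is an internal ultrapower embedding. That \(k\) is an ultrapower embedding follows from the factoring lemma applied to the composite \(k\circ j = j'\) (recall zero order embeddings are ultrapower embeddings, so \(j'\) qualifies). That \(k\) is \emph{internal} follows from the lemma factoring the internal ultrapower embedding \(i = i'\circ k\) through the ultrapower embedding \(k\). The main obstacle --- really the whole content of the argument --- is the continuity step: one must correctly read the ``not tail uniform'' hypothesis as a statement internal to \(M\) about the internal ultrapower embedding \(i\), rather than about embeddings of \(V\), in order to conclude continuity of \(i\) at \(\delta_*\).
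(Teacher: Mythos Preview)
Your proof is correct and follows essentially the same route as the paper: take a comparison by UA, use the zero-order hypothesis to get continuity of each leg at the relevant supremum, conclude $i(\sup j[\delta]) = i'(\sup j'[\delta])$, use the Ketonen generation property to get $i[M]\subseteq i'[M']$, and then invoke the two standard factoring lemmas to see that $k=(i')^{-1}\circ i$ is an internal ultrapower embedding. Your write-up is in fact a bit more explicit than the paper's on the continuity step (spelling out why an internal ultrapower embedding of $M$ cannot be discontinuous at a non--tail-uniform point of $M$) and on why $i'[M']$ absorbs the Skolem hull, but these are elaborations of the same argument rather than a different approach.
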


The supercompactness of the least strongly compact cardinal is actually a consequence of the universal property of Ketonen embeddings alone:

\begin{defn}
If \(\delta\) is a regular cardinal, \(\KU\) denotes the statement that the universal property of Ketonen embeddings holds at \(\delta\).
\end{defn}

\(\KU\) itself can be seen as a weak form of the Ultrapower Axiom:

\begin{thm}\label{KUA}
Suppose \(\delta\) is a regular cardinal. The following are equivalent:
\begin{enumerate}[(1)]
\item \(\KU\).
\item Suppose \(j_0 : V\to M_0\) is an ultrapower embedding and \(j_1 : V\to M_1\) is a Ketonen embedding at \(\delta\). Then the pair \((j_0,j_1)\) admits a comparison.
\end{enumerate}
\begin{proof}
(2) implies (1) by the proof of \cref{KetonenUniversality}. 

We show (1) implies (2). Assume (1) and fix \(j_0\) and \(j_1\) as in (2). Let \(\delta' = \text{cf}^{M_0}(\sup j_0[\delta])\). By \cref{KetonenExistence} applied in \(M_0\), in \(M_0\), there is a Ketonen embedding \(i_0 : M_0\to N\) at \(\delta'\). In particular, \(i_0\) is an internal ultrapower embedding of \(M_0\). 

We claim \(i_0\circ j_0 : V\to N\) is a zero order ultrapower embedding at \(\delta\). Note that \(\text{cf}^N(\sup i_0\circ j_0[\delta]) = \text{cf}^N(\sup i_0[\delta'])\): fix an increasing cofinal function \(f : \delta'\to \sup j_0[\delta]\) that belongs to \(M_0\), and note that \(i_0(f)\restriction \sup j_0[\delta]\) is an increasing cofinal function from \(\sup i_0[\delta']\) to \(\sup i_0\circ j_0[\delta]\) that belongs to \(N\). Since \(i_0\) is Ketonen at \(\delta'\), \(\sup i_0[\delta']\) is not tail uniform in \(N\). Hence by \cref{CofUniform}, \(\sup i_0\circ j_0[\delta]\) is not tail uniform in \(N\). Thus \(i_0\circ j_0\) is zero order at \(\delta\).

Therefore by \cref{KetonenUniversality}, there is an internal ultrapower embedding \(i_1 : M_1\to N\) such that \(i_0\circ j_0 = i_1\circ j_1\). In other words, \((i_0,i_1) : (M_0,M_1)\to N\) is a comparison of \((j_0,j_1)\).
\end{proof}
\end{thm}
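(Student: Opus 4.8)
The plan is to prove the two implications separately, with essentially all of the content in $(1)\Rightarrow(2)$. For $(2)\Rightarrow(1)$, the key observation is that the argument of \cref{KetonenUniversality} never really used the Ultrapower Axiom: it began by \emph{fixing} a comparison $(i,i'):(M,M')\to N$ of the Ketonen embedding $j$ and the zero order embedding $j'$, and everything afterward — the continuity of $i$ and $i'$ at the relevant suprema, the inclusion $i[M]\subseteq i'[M']$ coming from $M=H^M(j[V]\cup\{\sup j[\delta]\})$, and the definition $k=(i')^{-1}\circ i$ — was pure calculation producing an internal ultrapower embedding $k$ with $j'=k\circ j$. Since a zero order embedding is in particular an ultrapower embedding, applying (2) with the Ketonen embedding in the role of $j_1$ and the zero order embedding in the role of $j_0$ supplies exactly the comparison that this argument consumes. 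So $\KU$ follows.

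For $(1)\Rightarrow(2)$, fix an ultrapower embedding $j_0:V\to M_0$ and a Ketonen embedding $j_1:V\to M_1$ at $\delta$. The idea is to first advance $j_0$ on the $M_0$ side so that the resulting composite becomes zero order at $\delta$, at which point $\KU$ can be invoked to compare it against $j_1$. Concretely, I would set $\delta'=\textnormal{cf}^{M_0}(\sup j_0[\delta])$, a regular cardinal of $M_0$, and apply \cref{KetonenExistence} inside $M_0$ to obtain an embedding $i_0:M_0\to N$ that is Ketonen at $\delta'$ in $M_0$; being an ultrapower of $M_0$ formed in $M_0$, this $i_0$ is an internal ultrapower embedding.

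The crux is to verify that $i_0\circ j_0:V\to N$ is zero order at $\delta$, i.e. that $\sup(i_0\circ j_0)[\delta]$ is not tail uniform in $N$. Here I would transport a witness for the cofinality of $\sup j_0[\delta]$: fixing an increasing cofinal $f:\delta'\to\sup j_0[\delta]$ in $M_0$, the function $i_0(f)\restriction\sup i_0[\delta']$ lies in $N$ and is an increasing cofinal map from $\sup i_0[\delta']$ onto $\sup(i_0\circ j_0)[\delta]$, so these two ordinals have the same cofinality in $N$. Since $i_0$ is Ketonen at $\delta'$, its defining clause gives that $\sup i_0[\delta']$ is not tail uniform in $N$; as tail uniformity depends only on cofinality (\cref{CofUniform2}, or directly \cref{CofUniform} applied along this cofinal map), $\sup(i_0\circ j_0)[\delta]$ is not tail uniform in $N$ either.

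With $i_0\circ j_0$ now zero order at $\delta$ and $j_1$ Ketonen at $\delta$, I would invoke $\KU$ — that is, the universal property of \cref{KetonenUniversality}, now assumed as a hypothesis — to obtain an internal ultrapower embedding $i_1:M_1\to N$ with $i_0\circ j_0=i_1\circ j_1$. Then $(i_0,i_1):(M_0,M_1)\to N$ is a pair of internal ultrapower embeddings equalizing $j_0$ and $j_1$, hence a comparison of $(j_0,j_1)$, which is precisely (2). I expect the cofinality bookkeeping in the zero order verification to be the only genuine obstacle; the remaining steps are just the correct assembly of \cref{KetonenExistence}, \cref{CofUniform}, and \cref{KetonenUniversality}.
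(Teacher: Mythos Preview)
Your proposal is correct and follows essentially the same route as the paper's proof: for $(2)\Rightarrow(1)$ you correctly observe that the argument of \cref{KetonenUniversality} only consumes a comparison, which (2) supplies, and for $(1)\Rightarrow(2)$ you compose $j_0$ with the internal Ketonen embedding at $\delta'=\textnormal{cf}^{M_0}(\sup j_0[\delta])$ obtained via \cref{KetonenExistence} in $M_0$, verify the composite is zero order at $\delta$ by the cofinal-function transport, and then invoke $\KU$. The only cosmetic difference is that you restrict $i_0(f)$ to $\sup i_0[\delta']$ rather than to $\sup j_0[\delta]$ as written in the paper; your version is in fact the cleaner formulation of the same computation.
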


The proof of \cref{KUA} yields the following partial analysis of the comparison of a Ketonen embedding with an arbitrary ultrapower embedding:

\begin{lma}[UA]\label{ZeroComparison}
Suppose \(\delta\) is a uniform regular cardinal and \(i: V\to M\) is an ultrapower embedding. Let \(\delta' = \textnormal{cf}^{M}(\sup i[\delta])\). Then there is a comparison of \((j_\delta,i)\) of the form \((i_*,j_{\delta'}^M) : (M_\delta,M)\to N\) where \(i_* : M_\delta\to N\) is an internal ultrapower embedding of \(M_\delta\).\qed
\end{lma}

Our theorems regarding the least supercompact cardinal require only \(\KU\), but we will just state them under full UA. What is interesting is that it does not seem possible to analyze larger supercompact cardinals using a principle analogous to \(\KU\). 

The following is a standard category theoretic argument:

\begin{thm}[UA]\label{KetonenUnique}
Suppose \(\delta\) is regular. There is a unique Ketonen embedding at \(\delta\).
\begin{proof}
Suppose \(j_0: V\to M_0\) and \(j_1: V\to M_1\) are Ketonen embeddings at \(\delta\). By \(\text{KU}_\delta\), there is an internal ultrapower embedding \(k_0 : M_0\to M_1\) such that \(j_1 = k_0\circ j_0\). By \(\text{KU}_\delta\), there is an internal ultrapower embedding \(k_1 : M_1\to M_0\) such that \(j_0 = k_1\circ j_1\). Since \(k_1 \circ k_0\) is an internal ultrapower embedding from \(M_0\) to itself, \(k_1 \circ k_0\) is the identity. Similarly \(k_1\circ k_0\) is the identity. Therefore \(M_0\) and \(M_1\) are isomorphic, and so since they are transitive classes, \(M_0 = M_1\) and \(k_0 = k_1 = \text{id}\). It follows that \(j_0 = k_0\circ j_0 = k_1\circ j_1 = j_1\), as desired.
\end{proof}
\end{thm}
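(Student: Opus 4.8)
The plan is to run the standard uniqueness argument for objects pinned down by a universal property. Suppose \(j_0 : V\to M_0\) and \(j_1 : V\to M_1\) are both Ketonen embeddings at \(\delta\). The first thing to observe is that a Ketonen embedding is in particular zero order at \(\delta\): clause (2) of the definition of a Ketonen embedding is exactly the requirement that \(\sup j[\delta]\) fail to be tail uniform in the target. Hence each of \(j_0,j_1\) is simultaneously Ketonen and zero order at \(\delta\), which is precisely the situation in which \cref{KetonenUniversality} applies in either direction.

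First I would apply the universal property with \(j_0\) in the role of the Ketonen embedding and \(j_1\) in the role of the zero order embedding, obtaining an internal ultrapower embedding \(k_0 : M_0\to M_1\) with \(j_1 = k_0\circ j_0\). Reversing the roles, I would apply it again to get an internal ultrapower embedding \(k_1 : M_1\to M_0\) with \(j_0 = k_1\circ j_1\). The key step is then to argue that the round trips \(k_1\circ k_0\) and \(k_0\circ k_1\) are the identity. Each is a composition of internal ultrapower embeddings, hence an internal ultrapower embedding of \(M_0\) (respectively \(M_1\)) into itself, and such an embedding must be trivial. Once this is established, \(k_0\) and \(k_1\) are mutually inverse elementary maps between the transitive classes \(M_0\) and \(M_1\); since isomorphic transitive classes coincide, \(M_0 = M_1\) and \(k_0 = k_1 = \text{id}\), whence \(j_0 = k_0\circ j_0 = j_1\).

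The main obstacle is justifying that an internal ultrapower embedding of a model into itself is the identity. I would handle this either by invoking that general fact directly, or, to stay self-contained, by exploiting the generation clause (1) of the Ketonen definition. The composite \(k_1\circ k_0\) fixes \(j_0[V]\) pointwise, since \(k_1\circ k_0\circ j_0 = k_1\circ j_1 = j_0\), and one checks that it also fixes \(\sup j_0[\delta]\). The latter follows because \(\sup j_0[\delta]\) is not tail uniform in \(M_0\), so its \(M_0\)-cofinality is not uniform by \cref{CofUniform2}; by \cref{TailUnifChar}, applied inside \(M_0\), no elementary embedding of \(M_0\) is discontinuous at a non-tail-uniform regular cardinal, so \(k_0\) is continuous at that cofinality and hence at \(\sup j_0[\delta]\), giving \(k_0(\sup j_0[\delta]) = \sup k_0[j_0[\delta]] = \sup j_1[\delta]\), and symmetrically for \(k_1\). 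Since \(M_0 = H^{M_0}(j_0[V]\cup\{\sup j_0[\delta]\})\), an elementary map of \(M_0\) fixing all of these generators is the identity, and the symmetric computation in \(M_1\) finishes the argument.
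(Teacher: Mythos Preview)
Your proposal is correct and follows essentially the same universal-property argument as the paper: obtain \(k_0,k_1\) from \(\text{KU}_\delta\) in both directions, observe that the round trips are internal ultrapower self-embeddings and hence trivial, and conclude \(j_0=j_1\). The paper simply asserts that an internal ultrapower embedding of a model into itself is the identity, whereas you additionally supply a self-contained verification via the generation clause and continuity at \(\sup j_0[\delta]\); this extra detail is sound (though the detour through \(\textnormal{cf}^{M_0}(\sup j_0[\delta])\) is unnecessary, since \cref{TailUnifChar} applies directly to the non-tail-uniform ordinal \(\sup j_0[\delta]\) itself).
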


\begin{defn}[UA]
Suppose \(\delta\) is regular. We denote by \(j_\delta : V \to M_\delta\) the unique Ketonen embedding at \(\delta\) and by \(U_\delta\) the unique Ketonen ultrafilter at \(\delta\).
\end{defn}

A very useful fact is that the internal ultrapower embeddings of \(M_\delta\) are characterized by the weakest possible property:

\begin{thm}[UA]\label{KetonenInternal}
Suppose \(\delta\) is regular. Suppose \(i : M_\delta\to N\) is an ultrapower embedding. Then the following are equivalent:
\begin{enumerate}[(1)]
\item \(i\) is continuous at \(\sup j_\delta[\delta]\).
\item \(i\) is an internal ultrapower embedding of \(M_\delta\).
\end{enumerate}
\begin{proof}
That (2) implies (1) does not require \(\KU\) (except to make sense of the notation \(M_\delta\)): since \(\sup j_\delta[\delta]\) is not tail uniform in \(M_\delta\), by \cref{TailUnifChar}, no internal ultrapower embedding of \(M_\delta\) is discontinuous at \(\sup j_\delta[\delta]\).

We now show that (1) implies (2). Assume (1). Let \(\delta_* = \sup j_\delta[\delta]\). Then \(i\circ j_\delta : V\to N\) is zero order at \(\delta\): since \(i\) is continuous at \(\delta_*\), \(\sup (i\circ j_\delta)[\delta] = i(\delta_*)\), and since \(\delta_*\) is not uniform in \(M_\delta\), \(i(\delta_*)\) is not uniform in \(N\). By \(\KU\), there is an internal ultrapower embedding \(k : M_\delta\to N\) such that \(k\circ j_{\delta} = i\circ j_\delta\). Since \(k\) is internal to \(M_\delta\), \(k\) is also continuous at \(\delta_*\), and therefore \(k(\delta_*]) = i(\delta_*)\). Thus \[k \restriction j_\delta[V]\cup \{\delta_*\} = i\restriction j_\delta[V]\cup \{\delta_*\}\] Since \(j_\delta : V\to M_\delta\) is Ketonen, \(M_\delta = H^{M_{\delta}}(j_\delta[V]\cup \{\delta_*\})\). It therefore follows that \(k = i\). Therefore \(i\) is an internal ultrapower embedding of \(M_\delta\) since \(k\) is. Thus (2) holds.
\end{proof}
\end{thm}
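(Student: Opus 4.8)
The plan is to treat the two implications separately: the direction (2) $\Rightarrow$ (1) should follow immediately from the defining property of a Ketonen embedding, while essentially all of the content lives in (1) $\Rightarrow$ (2), where I would manufacture an internal embedding using the universal property and then argue that it must coincide with $i$.

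For (2) $\Rightarrow$ (1), write $\delta_* = \sup j_\delta[\delta]$ and recall that, by definition of a Ketonen embedding, $\delta_*$ is not tail uniform in $M_\delta$. I would argue by contradiction: if an internal ultrapower embedding $i : M_\delta \to N$ were discontinuous at $\delta_*$, then $i$ would, inside $M_\delta$, be an elementary embedding discontinuous at $\delta_*$, so by \cref{TailUnifChar} applied in $M_\delta$ the ordinal $\delta_*$ would be tail uniform in $M_\delta$ — a contradiction. Thus every internal $i$ is continuous at $\delta_*$. Note this direction uses only \cref{TailUnifChar} and the definition of $M_\delta$, and in particular does not invoke $\KU$.

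For (1) $\Rightarrow$ (2), assume $i$ is continuous at $\delta_*$. The first step is to check that the composite $i \circ j_\delta : V \to N$ is zero order at $\delta$: continuity of $i$ at $\delta_*$ gives $\sup (i\circ j_\delta)[\delta] = i(\delta_*)$, and since $\delta_*$ is not tail uniform in $M_\delta$, elementarity of $i$ transfers this to the statement that $i(\delta_*)$ is not tail uniform in $N$. Having verified the hypothesis, I would apply \cref{KetonenUniversality} to the Ketonen embedding $j_\delta$ and the zero order embedding $i \circ j_\delta$ to obtain an \emph{internal} ultrapower embedding $k : M_\delta \to N$ with $k \circ j_\delta = i \circ j_\delta$. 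The goal is then to show $k = i$, after which $i$ inherits internality from $k$.

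The delicate and final step is precisely this equality $k = i$. Here I would exploit the Ketonen generation property $M_\delta = H^{M_\delta}(j_\delta[V] \cup \{\delta_*\})$, which reduces the problem to checking that $k$ and $i$ agree on the generators $j_\delta[V] \cup \{\delta_*\}$. They agree on $j_\delta[V]$ by construction, so everything comes down to the single ordinal $\delta_*$. This is where continuity is used on both sides: $i$ is continuous at $\delta_*$ by hypothesis, and $k$ is continuous at $\delta_*$ since it is internal, by the direction already proved. Pushing the supremum through the identity $k \circ j_\delta = i \circ j_\delta$ along the cofinal set $j_\delta[\delta]$ then yields $k(\delta_*) = \sup(k\circ j_\delta)[\delta] = \sup(i\circ j_\delta)[\delta] = i(\delta_*)$. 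Since two elementary embeddings that agree on a generating set must be equal, $k = i$, completing the argument. I expect this last equality to be the main obstacle only in the conceptual sense that one must recognize that Ketonen generation collapses the entire comparison to agreement at $\delta_*$; once that is seen, the computation is routine bookkeeping with continuity.
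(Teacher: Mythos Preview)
Your proposal is correct and follows essentially the same argument as the paper: both directions are handled identically, using \cref{TailUnifChar} for (2) $\Rightarrow$ (1) and, for (1) $\Rightarrow$ (2), verifying that $i\circ j_\delta$ is zero order, invoking \cref{KetonenUniversality} to produce an internal $k$, and then concluding $k=i$ from agreement on the generating set $j_\delta[V]\cup\{\delta_*\}$ via continuity of both maps at $\delta_*$.
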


Another way to write this is the following:
\begin{cor}[UA]\label{CombinatorialInternal}
Suppose \(\delta\) is regular. Suppose \(W\) is an \(M_\delta\)-ultrafilter on \(X\in M_\delta\). Let \(\delta' = \textnormal{cf}^{M_\delta}(\sup j_\delta[\delta])\). Then \(W\in M_\delta\) if and only if for every partition \(P\) of \(X\) such that \(P\in M_\delta\) and \(|P|^{M_\delta} = \delta'\), there is some \(Q\subseteq P\) such that  \(Q\in M_\delta\), \(|Q|^{M_\delta} < \delta'\), and \(\bigcup Q\in W\).\qed 
\end{cor}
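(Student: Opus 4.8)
The plan is to recast membership of \(W\) in \(M_\delta\) as a continuity property of its ultrapower and then invoke \cref{KetonenInternal}. Set \(\delta_* = \sup j_\delta[\delta]\) and form the ultrapower embedding \(i = j^{M_\delta}_W : M_\delta\to M_W\) (here \(W\) is of the kind for which \(M_W\) is wellfounded, so that \(i\) is genuinely an ultrapower embedding of \(M_\delta\)). By the remark following the definition of internal ultrapower embeddings, any \(M_\delta\)-ultrafilter \(U\) with \(i = j_U^{M_\delta}\) belongs to \(M_\delta\) precisely when \(i\) is internal; applying this to \(U = W\) gives that \(W\in M_\delta\) if and only if \(i\) is an internal ultrapower embedding of \(M_\delta\). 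By \cref{KetonenInternal} this holds if and only if \(i\) is continuous at \(\delta_*\). So it suffices to prove that \(i\) is continuous at \(\delta_*\) if and only if the stated combinatorial condition holds.

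Next I would replace continuity at \(\delta_*\) by continuity at \(\delta' = \textnormal{cf}^{M_\delta}(\delta_*)\). Working inside \(M_\delta\), fix an increasing cofinal function \(f : \delta'\to \delta_*\); then \(i(f) : i(\delta')\to i(\delta_*)\) is increasing and cofinal in \(M_W\), and since \(f\) is cofinal we have \(\sup i[\delta_*] = \sup_{\xi<\delta'} i(f(\xi))\). A short monotonicity argument then shows \(\sup i[\delta_*] = i(\delta_*)\) if and only if \(i[\delta']\) is cofinal in \(i(\delta')\), that is, if and only if \(i\) is continuous at \(\delta'\). (This is the standard fact that an ultrapower embedding is continuous at an ordinal iff it is continuous at its cofinality.) Thus the task reduces to showing that \(i\) is continuous at \(\delta'\) if and only if the combinatorial condition holds.

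The heart of the argument is this last equivalence, which I would extract from the \L o\'s representation of ordinals below \(i(\delta')\). Every \(g : X\to \delta'\) in \(M_\delta\) gives \([g]_W < i(\delta')\), and conversely every ordinal below \(i(\delta')\) is of this form; moreover for \(\xi<\delta'\) one has \([g]_W < i(\xi)\) iff \(\{x : g(x)<\xi\}\in W\), i.e.\ iff \(\bigcup\{g^{-1}(\eta):\eta<\xi\}\in W\). Given a partition \(P = \langle P_\xi : \xi<\delta'\rangle\) of \(X\) in \(M_\delta\) of size \(\delta'\), let \(g(x)\) be the index of the block containing \(x\); if \(i\) is continuous at \(\delta'\) then \([g]_W < \sup i[\delta'] = i(\delta')\), so \([g]_W < i(\xi)\) for some \(\xi<\delta'\), and \(Q = \{P_\eta : \eta<\xi\}\) is an \(M_\delta\)-subfamily of \(P\) of size \(<\delta'\) with \(\bigcup Q\in W\). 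Conversely, if \(i\) is discontinuous at \(\delta'\), pick \(g : X\to\delta'\) with \([g]_W\geq \sup i[\delta']\); then \([g]_W \geq i(\xi)\) for all \(\xi<\delta'\), so the range of \(g\) is cofinal in \(\delta'\) and hence of \(M_\delta\)-size \(\delta'\), and the nonempty fibers of \(g\) form a partition of size exactly \(\delta'\) for which no small initial union, and hence (using regularity of \(\delta'\) in \(M_\delta\)) no small subunion, lies in \(W\). This witnesses the failure of the combinatorial condition and closes the chain of equivalences.

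The main obstacle I expect is the bookkeeping in this final step: correctly matching the quantifier ``for every partition of size \(\delta'\)'' with boundedness of \([g]_W\) below \(\sup i[\delta']\), and in particular arranging in the discontinuous case a genuine partition of size \emph{exactly} \(\delta'\) (not merely \({\leq}\,\delta'\)) all of whose subfamilies of size \(<\delta'\) are \(W\)-null. The regularity of \(\delta'\) in \(M_\delta\)---which lets one pass freely between arbitrary subfamilies of size \(<\delta'\) and initial segments of the index set---is exactly what makes this translation work. Everything else is either standard (the cofinality reduction) or cited (\cref{KetonenInternal} together with the ultrafilter characterization of internal ultrapower embeddings).
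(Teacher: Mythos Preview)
Your argument is correct and is precisely the intended one: the paper omits the proof as straightforward, introducing the corollary as ``another way to write'' \cref{KetonenInternal}, and your reduction---internalness of \(j_W^{M_\delta}\) via \cref{KetonenInternal}, continuity at \(\delta_*\) reduced to continuity at \(\delta'\) via a cofinal map in \(M_\delta\), and the standard \L o\'s translation between continuity at \(\delta'\) and \(\delta'\)-indecomposability---is exactly that straightforward proof. Your caveat that \(W\) should be taken countably complete (so that \(M_W\) is wellfounded and \cref{KetonenInternal} applies) is appropriate and matches the ambient conventions of the paper.
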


We omit the straightforward proof.

The following special case will be particularly important for us:
\begin{cor}[UA]\label{KetonenAbsorption}
Suppose \(\delta\) is regular. For any \(W\in \Un_{<\delta}\), \(j_W\restriction M_\delta\) is an internal ultrapower embedding of \(M_\delta\) and \(W\cap M_\delta\in M_\delta\).
\begin{proof}
This is immediate from \cref{KetonenInternal} as soon as one knows that \(j_W\restriction M_\delta\) is an ultrapower embedding of \(M_\delta\). This is a basic lemma proved in \cite{IR} Lemma 2.10.
\end{proof}
\end{cor}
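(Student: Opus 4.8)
The plan is to reduce the entire statement to \cref{KetonenInternal}, which characterizes the internal ultrapower embeddings of \(M_\delta\) among all ultrapower embeddings of \(M_\delta\) as precisely those continuous at \(\delta_* = \sup j_\delta[\delta]\). Thus I need exactly two things: first, that \(j_W\restriction M_\delta\) is an ultrapower embedding of \(M_\delta\) at all; and second, that it is continuous at \(\delta_*\). The first point is the only substantive input and is the place I expect the real difficulty to lie. I would not attempt to reprove it: it is the assertion, cited as \cite{IR} Lemma 2.10, that restricting the ultrapower embedding \(j_W : V\to M_W\) to the inner model \(M_\delta\) yields an ultrapower embedding \(j_W\restriction M_\delta : M_\delta\to N\). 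I take this as given; everything else is a cofinality computation together with an invocation of \cref{KetonenInternal}.

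For the continuity, I observe that \(j_W\restriction M_\delta\) agrees with \(j_W\) on all ordinals (every ordinal lies in \(M_\delta\), and the expressions \((j_W\restriction M_\delta)(\alpha) = j_W(\alpha)\) are literal), so continuity of \(j_W\restriction M_\delta\) at \(\delta_*\) is identical to continuity of \(j_W\) at \(\delta_*\). Now \(\delta_* = \sup j_\delta[\delta]\) has \(\textnormal{cf}^V(\delta_*) = \delta\), since \(j_\delta[\delta]\) is cofinal in \(\delta_*\) of order type \(\delta\) and \(\delta\) is regular. Because \(W\in\Un_{<\delta}\), we have \(\textsc{sp}(W) < \delta\), hence \(|\textsc{sp}(W)| < \delta = \textnormal{cf}^V(\delta_*)\). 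Any ordinal below \(j_W(\delta_*)\) is represented by a function \(f : \textsc{sp}(W)\to\delta_*\), whose range has size at most \(|\textsc{sp}(W)| < \textnormal{cf}(\delta_*)\) and is therefore bounded below \(\delta_*\) by some \(\beta\); consequently \([f]_W \leq j_W(\beta) < \sup j_W[\delta_*]\). This shows \(j_W(\delta_*) = \sup j_W[\delta_*]\), i.e.\ \(j_W\restriction M_\delta\) is continuous at \(\delta_*\). Applying \cref{KetonenInternal}, \(j_W\restriction M_\delta\) is an internal ultrapower embedding of \(M_\delta\).

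It remains to extract \(W\cap M_\delta\in M_\delta\). Internality furnishes an \(M_\delta\)-ultrafilter \(W'\in M_\delta\) with \(j_W\restriction M_\delta = j^{M_\delta}_{W'}\), so it suffices to identify \(W'\) with \(W\cap M_\delta\). This is a routine derived-ultrafilter computation: under the canonical identification of the target of \(j^{M_\delta}_{W'}\) with its image inside \(M_W\), the generating seed is carried to \([\textnormal{id}]_W\), and for each \(X\in P(\textsc{sp}(W))\cap M_\delta\) one has \(X\in W'\) iff \([\textnormal{id}]_W\in (j_W\restriction M_\delta)(X) = j_W(X)\) iff \(X\in W\). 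Hence \(W' = W\cap M_\delta\), and since \(W'\in M_\delta\) we conclude \(W\cap M_\delta\in M_\delta\). The only genuinely nontrivial step in the whole argument is the cited fact that the restriction of \(j_W\) to \(M_\delta\) is again an ultrapower embedding; once that is in hand, the corollary follows by the cofinality estimate above and \cref{KetonenInternal}.
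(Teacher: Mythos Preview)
Your proposal is correct and follows precisely the approach the paper takes: cite the external fact that \(j_W\restriction M_\delta\) is an ultrapower embedding of \(M_\delta\), verify continuity at \(\sup j_\delta[\delta]\) (which the paper leaves implicit, since it is immediate from \(\textsc{sp}(W) < \delta = \textnormal{cf}(\sup j_\delta[\delta])\)), and invoke \cref{KetonenInternal}. Your explicit extraction of \(W\cap M_\delta\in M_\delta\) from internality is routine and matches what the paper intends by ``immediate.''
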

\section{Closure properties of \(M_\delta\)}
\cref{KetonenAbsorption} shows that the Ketonen ultrapower \(M_\delta\) absorbs many countably complete ultrafilters from \(V\). In order to show that the embedding \(j_\delta :V\to M_\delta\) is strong, and even supercompact, we use large cardinals to convert this absorption of ultrafilters into the absorption of sets. This conversion process involves 
\subsection{Independent families}
\begin{defn}
Suppose \(X\) is a set and \(\kappa\) is an infinite cardinal. A set \(F\subseteq P(X)\) is called a {\it \(\kappa\)-independent family} of subsets of \(X\) if for any disjoint subfamilies \(\sigma\) and \(\tau\) of \(F\) of cardinality less than \(\kappa\), there is some \(x\in X\) that belongs to every element of \(\sigma\) and no element of \(\tau\).
\end{defn}

Independent families come up in the theory of filters in the following way:

\begin{lma}\label{IndependentFilter}
Suppose \(X\) is a set and \(\kappa\) is an infinite cardinal and \(F\subseteq P(X)\). Then \(F\) is a \(\kappa\)-independent family of subsets of \(X\) if and only if for any \(S\subseteq F\), \(S\cup \{X\setminus A : A\in F\setminus S\}\) generates a \(\kappa\)-complete filter on \(X\).\qed
\end{lma}

One could take \cref{IndependentFilter} to be the definition of a \(\kappa\)-independent family, but the advantage of our definition is that it makes the following absoluteness lemma  obvious:

\begin{lma}\label{IndAbsolute}
Suppose \(\kappa\) is a cardinal and \(M\) is an inner model such that \(M^{<\kappa}\subseteq M\). Suppose \(X\in M\) and \(M\) satisfies that \(F\) is a  \(\kappa\)-independent family of subsets of \(X\). Then \(F\) is a \(\kappa\)-independent family of subsets of \(X\).\qed
\end{lma}

An important combinatorial fact, due to Hausdorff, is the existence of \(\kappa\)-independent families:
\begin{thm}[Hausdorff]\label{Hausdorff}
Suppose \(\kappa\leq\lambda\) are cardinals such that \(\lambda^{<\kappa} = \lambda\). Then there is a \(\kappa\)-independent family \(F\) of subsets of \(\lambda\) such that \(|F| = 2^\lambda\).
\begin{proof}
It clearly suffices to construct a set \(X\) such that \(|X| = \lambda\) there is a \(\kappa\)-independent family \(F\) of subsets of \(X\) such that \(|F| = 2^\lambda\). Let \[X= \{(\sigma,S) : \sigma\in P_\kappa(\lambda)\text{ and }S\subseteq \sigma\}\]
For each \(A\subseteq \lambda\), let \(X_A = \{(\sigma,S) \in X : \sigma\cap A\in S\}\).
\end{proof}
\end{thm}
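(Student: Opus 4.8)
The plan is to verify that \(F = \{X_A : A\subseteq\lambda\}\) is a \(\kappa\)-independent family of subsets of \(X\), that \(|X| = \lambda\), and that \(A\mapsto X_A\) is injective, so that \(|F| = 2^\lambda\). For the cardinality of \(X\), note that the hypothesis gives \(|P_\kappa(\lambda)| = \lambda^{<\kappa} = \lambda\), and that for each \(\sigma\in P_\kappa(\lambda)\) we have \(|P(\sigma)| = 2^{|\sigma|}\leq \lambda^{|\sigma|}\leq \lambda\). Since the sets \(S\) relevant to the construction range over subfamilies of \(P(\sigma)\) of size less than \(\kappa\), of which there are at most \(\lambda^{<\kappa} = \lambda\), we conclude \(|X| = \lambda\).

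The heart of the matter is \(\kappa\)-independence. Suppose \(\{X_{A_i} : i\in I\}\) and \(\{X_{B_j} : j\in J\}\) are disjoint subfamilies of \(F\) with \(|I|, |J| < \kappa\); disjointness forces \(A_i\neq B_j\) for all \(i,j\). I must find \((\sigma,S)\in X\) with \(\sigma\cap A_i\in S\) for every \(i\) and \(\sigma\cap B_j\notin S\) for every \(j\). The idea is to choose \(\sigma\) so that it already distinguishes the first family from the second: for each pair \((i,j)\) fix an ordinal \(\xi_{ij}\in A_i\triangle B_j\) (possible as \(A_i\neq B_j\)) and let \(\sigma = \{\xi_{ij} : i\in I,\ j\in J\}\). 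Since \(|I|\cdot|J| < \kappa\) we have \(\sigma\in P_\kappa(\lambda)\), and by construction \(\sigma\cap A_i\neq\sigma\cap B_j\) for all \(i,j\). Setting \(S = \{\sigma\cap A_i : i\in I\}\), we get \(\sigma\cap A_i\in S\) for each \(i\) immediately, while \(\sigma\cap B_j\notin S\) for each \(j\) because \(\sigma\cap B_j\) differs from every \(\sigma\cap A_i\). Thus \((\sigma,S)\) is the required witness.

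Finally, injectivity of \(A\mapsto X_A\) follows from the same separation idea applied to singletons: if \(\xi\in A\setminus A'\) then \((\{\xi\}, \{\{\xi\}\})\) lies in \(X_A\) but not in \(X_{A'}\), so \(X_A\neq X_{A'}\) and hence \(|F| = |P(\lambda)| = 2^\lambda\). I expect the main obstacle to be the independence step, and in particular the observation that fewer than \(\kappa\) pairwise-distinct subsets of \(\lambda\) can be separated on a single \(\sigma\) of size less than \(\kappa\); this is exactly where the arithmetic \(|I|\cdot|J| < \kappa\) and the covering hypothesis \(\lambda^{<\kappa} = \lambda\), which keeps both \(\sigma\) and the ambient set \(X\) of size \(\lambda\), are used.
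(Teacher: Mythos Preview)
Your proof is correct and follows exactly the construction sketched in the paper, which only writes down \(X\) and the sets \(X_A\) and leaves the verification of independence, injectivity, and \(|X|=\lambda\) to the reader. Your implicit correction---reading \(S\) as a subfamily of \(P(\sigma)\) of size less than \(\kappa\), rather than \(S\subseteq\sigma\) as literally written---is necessary both for the condition \(\sigma\cap A\in S\) to make sense and for the bound \(|X|=\lambda\) to hold, and is clearly the intended reading.
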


\subsection{The tight covering property}
In this section we prove that \(M_\delta\) has the {\it tight covering property} under a technical assumption proved in \cite{Frechet}.
\begin{defn}
An inner model \(M\) has the {\it tight covering property} at a cardinal \(\lambda\) if every set \(A\subseteq M\) such that \(|A|\leq \lambda\) is contained in a set \(A'\in M\) such that \(|A'|^M\leq \lambda\).
\end{defn}
The tight covering property at \(\lambda\) is simply the \((\lambda,\lambda)\)-covering property defined above. The key observation is the following fact:
\begin{thm}[UA]\label{CoverDichotomy}
Suppose \(\delta\) is a uniform regular cardinal. Then exactly one of the following holds:
\begin{enumerate}[(1)]
\item \(M_\delta\) has the tight covering property at \(\delta\).
\item For any \(U\in \Un_{\leq\delta}\), \(U\cap M_\delta\in M_\delta\).
\end{enumerate}
\begin{proof}
We first show that (1) implies (2) fails. Assume (1). Then \(\text{cf}^{M_\delta}(\sup j_\delta[\delta]) = \delta\). By \cref{CofUniform2}, since \(\sup j_\delta[\delta]\) is not tail uniform in \(M_\delta\), \(\delta\) is not uniform in \(M_\delta\). Since \(\delta\) is uniform in \(V\), it follows that (2) fails.

We now show that if (1) fails, then (2) holds. Let \(\delta' = \text{cf}^{M_\delta}(\sup j_\delta[\delta])\). By \cref{KetonenCov}, the tight covering property at \(\delta\) fails for \(M_\delta\) if and only if \(\delta' > \delta\). But as a consequence of \cref{CombinatorialInternal}, any countably complete \(M_\delta\)-ultrafilter on an ordinal \(\alpha < \delta'\) belongs to \(M_\delta\). In particular since \(\delta < \delta'\), (2) holds.
\end{proof}
\end{thm}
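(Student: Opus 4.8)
The plan is to reduce both clauses to a single comparison of cardinals. Set $\delta_* = \sup j_\delta[\delta]$ and $\delta' = \textnormal{cf}^{M_\delta}(\delta_*)$. First I would record two facts about $\delta'$. On one hand $\delta'\geq \delta$ always: if there were a cofinal map $c:\delta'\to\delta_*$ in $M_\delta$ with $\delta'<\delta$, then pulling back through $j_\delta$ (sending $\xi$ to the least $\alpha<\delta$ with $j_\delta(\alpha)>c(\xi)$) would produce a cofinal map $\delta'\to\delta$ in $V$, contradicting the regularity of $\delta$. On the other hand, by \cref{KetonenCov} the model $M_\delta$ has the $(\delta,\delta')$-covering property, so if $\delta'=\delta$ then tight covering holds; conversely, if tight covering holds then covering $A=j_\delta[\delta]$ by some $A'\in M_\delta$ of $M_\delta$-cardinality $\leq\delta$ and intersecting with $\delta_*$ yields a cofinal subset of $\delta_*$ of $M_\delta$-size $\leq\delta$, forcing $\delta'\leq\delta$. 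Combining, clause (1) holds exactly when $\delta'=\delta$, and it fails exactly when $\delta'>\delta$. Thus the statement becomes: clause (2) holds if and only if $\delta'>\delta$, which automatically gives a genuine dichotomy.

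For mutual exclusivity I would argue that $\delta'=\delta$ precludes (2). By the definition of a Ketonen embedding, $\delta_*$ is not tail uniform in $M_\delta$, so by \cref{CofUniform2} its $M_\delta$-cofinality $\delta'=\delta$ is not uniform in $M_\delta$. But $\delta$ is uniform in $V$, so fix a uniform countably complete ultrafilter $U$ on $\delta$. If (2) held, then $U\cap M_\delta\in M_\delta$; this is an ultrafilter on $\delta$ in $M_\delta$ which is countably complete there (countable completeness is inherited by the inner model) and tail uniform there (every $X\in U$ has supremum $\delta$). Since $\delta$ is regular in $M_\delta$—regularity is downward absolute—\cref{RegTailFrec} applied inside $M_\delta$ makes $U\cap M_\delta$ a Fréchet uniform ultrafilter, so $\delta$ is uniform in $M_\delta$, a contradiction. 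Hence (1) implies the failure of (2).

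It remains to show that $\delta'>\delta$ yields (2). Fix $U\in\Un_{\leq\delta}$, so $\textsc{sp}(U)\leq\delta<\delta'$, and consider the $M_\delta$-ultrafilter $W=U\cap M_\delta$ on the ordinal $\textsc{sp}(U)$, which is countably complete in $M_\delta$. The main work is the absorption $W\in M_\delta$, and I would extract it from the combinatorial characterization of internal ultrapower embeddings of $M_\delta$, namely \cref{CombinatorialInternal} (itself a repackaging of \cref{KetonenInternal}, where internality is detected by continuity at $\delta_*$). That criterion asks, for every partition $P\in M_\delta$ of $\textsc{sp}(U)$ with $|P|^{M_\delta}=\delta'$, for a subfamily $Q\in M_\delta$ of $M_\delta$-size $<\delta'$ with $\bigcup Q\in W$; but since $|\textsc{sp}(U)|^{M_\delta}<\delta'$ there is no partition of $\textsc{sp}(U)$ into $\delta'$ pieces at all, so the criterion is satisfied vacuously and $W\in M_\delta$. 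This establishes (2). The hard part is precisely this last step: everything hinges on having the clean combinatorial internality criterion \cref{CombinatorialInternal} available, together with the correct identification of $\delta'$ as the relevant threshold, so that an ordinal below $\delta'$ is automatically too small to obstruct absorption.
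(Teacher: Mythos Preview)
Your proof is correct and follows essentially the same approach as the paper: both reduce the dichotomy to whether $\delta'=\textnormal{cf}^{M_\delta}(\sup j_\delta[\delta])$ equals $\delta$ or exceeds it, use the non-uniformity of $\delta_*$ in $M_\delta$ together with \cref{CofUniform2} to show (1) excludes (2), and invoke \cref{CombinatorialInternal} to get absorption when $\delta'>\delta$. Your version is more explicit in places---you spell out why $\delta'\geq\delta$, why tight covering is equivalent to $\delta'=\delta$, and why the partition criterion in \cref{CombinatorialInternal} is vacuous for ordinals below $\delta'$---but the structure and the key lemmas are identical.
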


By \cref{CoverDichotomy}, there is a bizarre consequence of the failure of tight covering for \(M_\delta\): if \(M_\delta\) does not have the tight covering property at \(\delta\), then \(U_\delta\cap M_\delta \in M_\delta\), even though \(M_\delta = M_{U_\delta}\). But in fact, we do not know how to rule this out, and the following question is open:
\begin{qst}[UA]\label{TightCoverQ}
Suppose \(\delta\) is a regular cardinal. Does \(M_\delta\) have the tight covering property at \(\delta\)?
\end{qst}
Perhaps the situation is not as bizarre as it appears on first glance, considering the following theorem from \cite{IR}:

\begin{thm}
If \(\kappa\) is supercompact, then there is a \(\kappa\)-complete nonprincipal ultrafilter \(U\) on a cardinal such that \(U\cap M_U\in M_U\). 
\end{thm}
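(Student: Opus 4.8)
The plan is to produce the ultrafilter as a projection of a supercompactness measure to the successor of a singular cardinal, arranged so that the ambient ultrapower has just barely too little closure to cover tightly.

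Fix the supercompact \(\kappa\) and choose a singular cardinal \(\mu > \kappa\) of cofinality \(\omega\) (a strong limit, say). Let \(\delta = \mu^+\) and fix a normal fine \(\kappa\)-complete ultrafilter \(\mathcal U\) on \(P_\kappa(\mu)\), so that \(j = j_{\mathcal U} : V \to M\) has critical point \(\kappa\) and \(M^\mu \subseteq M\) but in general not \(M^{\mu^+}\subseteq M\). Let \(U\) be the ultrafilter on \(\delta\) derived from \(j\) using the seed \(\nu = \sup j[\delta]\); equivalently, \(U = F_*(\mathcal U)\) where \(F : P_\kappa(\mu)\to\delta\) represents \(\nu\). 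Then \(U\) is a \(\kappa\)-complete uniform, hence nonprincipal, ultrafilter on the cardinal \(\delta\), it is zero order at \(\delta\) (so \([\text{id}]_U = \sup j_U[\delta]\)), and there is a factor embedding \(h : M_U\to M\) with \(h\circ j_U = j\) and \(h([\text{id}]_U) = \nu\).

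First I would record that covering failure is automatic once self-absorption is established: since \(U\) is nonprincipal, \(U\notin M_U\), so if \(U\cap M_U\in M_U\) then \(P(\delta)\not\subseteq M_U\), and by \cref{KetonenCov} this is precisely the assertion that \(\delta' := \textnormal{cf}^{M_U}(\sup j_U[\delta]) > \delta\). Thus the entire content of the theorem is the single statement \(U\cap M_U\in M_U\), and the reason for choosing \(\mu\) singular of small cofinality with only \(\mu\)-closure is exactly to force \(\delta'>\delta\). To prove \(U\cap M_U\in M_U\) I would reformulate it using the basic fact (cited in the proof of \cref{KetonenAbsorption}) that \(j_U\restriction M_U\) is an ultrapower embedding of \(M_U\): the \(M_U\)-ultrafilter it derives from the seed \([\text{id}]_U\) is precisely \(U\cap M_U\), so the conclusion is equivalent to \(j_U\restriction M_U\) being an \emph{internal} ultrapower embedding of \(M_U\). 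To establish internality I would identify \(U\cap M_U\) concretely. In one direction this is clean: for every closed cofinal \(C\subseteq\delta\) lying in \(M_U\), the set \(j_U[C]\) is cofinal in \(\sup j_U[\delta] = [\text{id}]_U\) and is contained in the \(M_U\)-closed set \(j_U(C)\), so \([\text{id}]_U\in j_U(C)\) and hence \(C\in U\); thus \(U\cap M_U\) extends the \(M_U\)-club filter on \(\delta\). In the other direction I would use the high cofinality \(\delta'>\delta\) to pin down how \(U\cap M_U\) decides the remaining \(M_U\)-stationary sets: whether a stationary \(S\in M_U\) lies in \(U\) is governed by whether the ordinal \(\sup j_U[\delta]\), of \(M_U\)-cofinality \(\delta'\), belongs to \(j_U(S)\), and I would analyze this through the cofinality structure of \(\sup j[\mu^+]\) inside the \(\mu\)-closed model \(M\), transported along \(h\), to show the decision is uniformly definable over \(M_U\).

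The hard part is this last reconstruction. Because \(M\), and hence \(M_U\), is closed only up to \(\mu\) and not \(\mu^+\) — which is exactly what makes covering fail — I cannot invoke any closure argument to capture \(U\cap M_U\) directly, and must instead exploit the normal generation of \(M\) together with the singular-cardinal (PCF) structure at \(\mu\). Concretely, the crux is to show that \(\delta'\) is not in the range of \(j_U\) on \(M_U\)-regular cardinals below \(j_U(\delta)\), and that this forces the membership relation \(\sup j_U[\delta]\in j_U(S)\) to be computable inside \(M_U\) from \(S\) alone. Verifying that the chosen \(\mu\) genuinely yields \(\delta'>\delta\) and that the trace is internally definable is where essentially all of the work lies.
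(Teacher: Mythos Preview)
The paper does not prove this theorem; it is cited from \cite{IR}. So there is no in-paper argument to compare against. Evaluating your proposal on its own terms: it is not a proof but an outline that explicitly defers the main step.

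You correctly reduce the problem to showing \(U\cap M_U\in M_U\) for your derived ultrafilter, and your observation that \(U\cap M_U\) extends the \(M_U\)-club filter on \(\delta\) is valid. But from there the argument dissolves. Two concrete issues:

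First, you have not verified \(\delta'>\delta\). From \(M^\mu\subseteq M\) and \(\textnormal{cf}(\nu)=\delta\) one gets only \(\textnormal{cf}^M(\nu)\geq\delta\), and elementarity of \(h\) gives \(h(\delta')=\textnormal{cf}^M(\nu)\); this is perfectly compatible with \(\delta'=\delta\) (namely when \(\textnormal{cf}^M(\nu)=h(\delta)\)), so the choice \(\textnormal{cf}(\mu)=\omega\) does not by itself force strict inequality. You say as much in your final paragraph.

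Second, and more seriously, even granting \(\delta'>\delta\), you give no mechanism for showing that membership of \(M_U\)-stationary sets in \(U\) is computable inside \(M_U\). In the paper the analogous implication (\cref{CoverDichotomy}, from covering failure to \(U_\delta\cap M_\delta\in M_\delta\)) passes through \cref{CombinatorialInternal}, which in turn rests on the universal property of Ketonen embeddings under UA; there is no ZFC substitute available to you here, and your \(U\) is not a Ketonen ultrafilter in any case. Your gesture toward ``PCF structure'' and transport along \(h\) is not an argument: what you would actually need is to exhibit, inside \(M_U\), an \(M_U\)-ultrafilter on \(\delta\) contained in \(U\) (which would then equal \(U\cap M_U\)), and nothing in your sketch produces one.

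In short, you have identified a plausible candidate for \(U\) and correctly isolated the crux, but the proposal stops exactly where the proof would have to begin.
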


We do come very close to a positive answer to \cref{TightCoverQ}, and depending on one's interests, one might say that the question is answered in all cases of interest (e.g., assuming GCH).

We start with the following fact:

\begin{thm}[UA]\label{TightCover}
Suppose \(\delta\) is regular. Let \(\kappa = \textsc{crt}(j_\delta)\) and assume \(\kappa\) is \(\delta\)-strongly compact. Then \(M_\delta\) has the tight covering property at \(\delta\).
\begin{proof}Assume towards a contradiction that \(M_\delta\) does not have the tight covering propery, so by \cref{CoverDichotomy}, for any  \(U\in \Un_{\delta}\), \begin{equation}\label{BadSide}U\cap M_\delta\in M_\delta\end{equation} In order to control cardinal arithmetic, it is convenient to prove the following claim.
\begin{clm}\label{StrongCompactAbs}
In \(M_\delta\), \(\kappa\) is \(\delta\)-strongly compact.
\begin{proof}
By a theorem of Ketonen \cite{Ketonen} (which is closely related to \cref{KetonenRegularity}), to show that \(M_\delta\) satisfies that \(\kappa\) is \(\delta\)-strongly compact, it suffices to show that every ordinal \(\alpha\) such that \(\kappa \leq \alpha \leq\delta\) and \(\text{cf}^{M_{U_\delta}}(\alpha) \geq \kappa\) carries a uniform \(\kappa\)-complete ultrafilter in \(M_\delta\). But note that since \(M_\delta\) is closed under \(\kappa\)-sequences and \(\text{cf}^{M_{U_\delta}}(\alpha) \geq \kappa\), in fact \(\text{cf}(\alpha) \geq \kappa\). Therefore since \(\kappa\) is \(\delta\)-strongly compact, in \(V\), \(\alpha\) carries a \(\kappa\)-complete ultrafilter \(U\). Now \(U\cap M_\delta\in M_\delta\) by \cref{BadSide}, so \(U\cap M_\delta\) witnesses that \(\alpha\) carries a uniform \(\kappa\)-complete ultrafilter in \(M_\delta\).
\end{proof}
\end{clm}
Since \(\kappa\) is \(\delta\)-strongly compact in \(M_\delta\), by Solovay's theorem \cite{Solovay}, \((\delta^{<\kappa})^{M_\delta} = \delta\). Therefore we are in a position to apply \cref{Hausdorff} inside \(M_\delta\). This yields the existence of a \(\kappa\)-independent family \(F\) of subsets of \(\delta\) such that \(|F|^{M_\delta} = \delta\). (In fact one could also have taken \(|F|^{M_\delta} = (2^\delta)^{M_\delta}\), but we do not need this here.) By \cref{IndAbsolute}, \(F\) is a \(\kappa\)-independent family of subsets of \(\delta\) in \(V\).
\begin{clm}\label{PowerClm}
\(P(F)\subseteq M_\delta\).
\begin{proof}
Suppose \(S\subseteq F\). Let \(G\) be the filter generated by \(S\cup \{\delta\setminus A : A\in F\setminus S\}\). Then by \cref{IndependentFilter}, \(G\) is a \(\kappa\)-complete filter on \(\delta\). Moreover \(G\) is generated by \(\delta\)-many sets. Therefore since \(\kappa\) is \(\delta\)-strongly compact, \(G\) extends to a \(\kappa\)-complete ultrafilter \(U\) on \(\delta\). By \cref{BadSide}, \(U\cap M_\delta\in M_\delta\). But \(S = (U\cap M_\delta)\cap F\), so \(S\in M_\delta\).
\end{proof}
\end{clm}
Since \(P(F)\subseteq M_\delta\) and \(|F|^{M_\delta} = \delta\), it follows that \(P(\delta) \subseteq M_\delta\). But by \cref{BadSide}, \(U_\delta = U_\delta\cap M_\delta \in M_\delta\). This is a contradiction since \(M_\delta = M_{U_\delta}\) and no nonprincipal countably complete ultrafilter belongs to its own ultrapower.
\end{proof}
\end{thm}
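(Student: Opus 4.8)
The plan is to argue by contradiction using the dichotomy of \cref{CoverDichotomy}. Suppose $M_\delta$ fails to have the tight covering property at $\delta$. Since exactly one of the two alternatives of \cref{CoverDichotomy} holds, we obtain that every $U \in \Un_{\leq\delta}$ satisfies $U \cap M_\delta \in M_\delta$; call this the absorption hypothesis. The goal is then to leverage the $\delta$-strong compactness of $\kappa$ to upgrade this absorption of ultrafilters into absorption of subsets of $\delta$, eventually showing $P(\delta) \subseteq M_\delta$. Once that is achieved, applying the absorption hypothesis to $U_\delta$ itself gives $U_\delta = U_\delta \cap M_\delta \in M_\delta = M_{U_\delta}$, which is absurd since no nonprincipal countably complete ultrafilter lies in its own ultrapower.

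To set up the conversion I would first verify that $\kappa$ is $\delta$-strongly compact in $M_\delta$. Using Ketonen's ultrafilter characterization of strong compactness (in the spirit of \cref{StrongcompactnessEquiv}(4)) internally to $M_\delta$, it suffices to produce, for each ordinal $\alpha \leq \delta$ with $\textnormal{cf}^{M_\delta}(\alpha) \geq \kappa$, a uniform $\kappa$-complete ultrafilter on $\alpha$ in $M_\delta$. Because $M_\delta$ is closed under $<\kappa$-sequences, such an $\alpha$ also has cofinality at least $\kappa$ in $V$, so the $\delta$-strong compactness of $\kappa$ in $V$ supplies a $\kappa$-complete uniform ultrafilter $U$ on $\alpha$; the absorption hypothesis then places $U \cap M_\delta$ in $M_\delta$, and this is the desired internal witness. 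With strong compactness available internally, Solovay's theorem \cite{Solovay} gives $(\delta^{<\kappa})^{M_\delta} = \delta$, which is exactly the cardinal arithmetic needed to run \cref{Hausdorff} inside $M_\delta$.

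Applying \cref{Hausdorff} in $M_\delta$ yields a family $F$ that $M_\delta$ regards as a $\kappa$-independent family of subsets of $\delta$ with $|F|^{M_\delta} = \delta$; by \cref{IndAbsolute} (using the $<\kappa$-closure of $M_\delta$) the family $F$ is genuinely $\kappa$-independent in $V$. The crux is then to show $P(F) \subseteq M_\delta$. Given any $S \subseteq F$, \cref{IndependentFilter} guarantees that $S \cup \{\delta \setminus A : A \in F \setminus S\}$ generates a $\kappa$-complete filter on $\delta$, and this filter has at most $\delta$ generators, so by the $\delta$-strong compactness of $\kappa$ (in the form \cref{StrongcompactnessEquiv}(3)) it extends to a $\kappa$-complete ultrafilter $U$ on $\delta$. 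The absorption hypothesis gives $U \cap M_\delta \in M_\delta$, and since $S$ is recovered as $(U \cap M_\delta) \cap F$, we conclude $S \in M_\delta$. As $F$ has size $\delta$ in $M_\delta$, a bijection $F \leftrightarrow \delta$ inside $M_\delta$ transports $P(F) \subseteq M_\delta$ to $P(\delta) \subseteq M_\delta$, completing the contradiction described above.

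I expect the conceptual heart, and the step most easily overlooked, to be the use of independent families to encode each subset $S$ of $\delta$ as the trace of an ultrafilter on $F$: this is precisely what converts the purely ultrafilter-theoretic absorption hypothesis into a statement about arbitrary sets. The accompanying technical burden is ensuring the cardinal arithmetic works, which is why the internal strong compactness of $\kappa$ in $M_\delta$ --- and hence $(\delta^{<\kappa})^{M_\delta} = \delta$ --- must be established first; without it there is no guarantee that a $\kappa$-independent family of size $\delta$ exists in $M_\delta$, and the whole encoding would collapse.
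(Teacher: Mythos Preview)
Your proposal is correct and follows the paper's proof essentially step for step: the contradiction via \cref{CoverDichotomy}, the internal strong compactness claim to obtain \((\delta^{<\kappa})^{M_\delta}=\delta\), the Hausdorff independent family in \(M_\delta\), the encoding of each \(S\subseteq F\) as the trace of a \(\kappa\)-complete ultrafilter, and the final contradiction \(U_\delta\in M_{U_\delta}\) all match exactly.
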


A very similar argument yields the following fact:
\begin{thm}[UA]
Suppose \(\delta\) is regular. Let \(\kappa = \textsc{crt}(j_\delta)\) and assume \(\kappa\) is \(\delta\)-strongly compact. Then for any \(\gamma < \delta\), \(P(\gamma)\subseteq M_\delta\).
\end{thm}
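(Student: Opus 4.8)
The plan is to run the argument of \cref{TightCover} almost verbatim, with one crucial change. In \cref{TightCover} the key input was the failure-of-tight-covering hypothesis, namely the assertion that \(U\cap M_\delta\in M_\delta\) for every \(U\in\Un_{\leq\delta}\), which holds only in the ``bad'' case of \cref{CoverDichotomy}. Here, because \(\gamma<\delta\), every uniform countably complete ultrafilter on an ordinal \(\leq\gamma\) lies in \(\Un_{<\delta}\), so \cref{KetonenAbsorption} guarantees unconditionally that it is absorbed by \(M_\delta\). This absorption plays exactly the role of the bad-case hypothesis, but now it is free of charge. We may assume \(\kappa\leq\gamma\), since \(P(\gamma)\subseteq V_\kappa\subseteq M_\delta\) when \(\gamma<\kappa\); write \(\lambda=|\gamma|^{M_\delta}\).

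First I would establish the analogue of \cref{StrongCompactAbs}: that \(\kappa\) is \(\gamma\)-strongly compact in \(M_\delta\). By Ketonen's characterization (\cref{StrongcompactnessEquiv}(4)) applied inside \(M_\delta\), it suffices that every \(\alpha\leq\gamma\) with \(\textnormal{cf}^{M_\delta}(\alpha)\geq\kappa\) carry a uniform \(\kappa\)-complete ultrafilter in \(M_\delta\). Given such an \(\alpha\), closure of \(M_\delta\) under \({<}\kappa\)-sequences forces \(\textnormal{cf}^V(\alpha)\geq\kappa\); since \(\kappa\) is \(\gamma\)-strongly compact in \(V\) (being \(\delta\)-strongly compact with \(\gamma\leq\delta\)), \(\alpha\) carries a \(\kappa\)-complete uniform ultrafilter \(U\) in \(V\), using \cref{StrongcompactnessEquiv}(4) on \(\textnormal{cf}^V(\alpha)\) together with the pushforward \cref{CofUniform}. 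As \(\alpha\leq\gamma<\delta\), we have \(U\in\Un_{<\delta}\), so \(U\cap M_\delta\in M_\delta\) by \cref{KetonenAbsorption}, and this is the desired ultrafilter. With \(\kappa\) now \(\gamma\)-strongly compact in \(M_\delta\), Solovay's theorem \cite{Solovay} gives \((\lambda^{<\kappa})^{M_\delta}=\lambda\), so \cref{Hausdorff} may be applied in \(M_\delta\) to produce a \(\kappa\)-independent family \(F\in M_\delta\) of subsets of \(\gamma\) with \(|F|^{M_\delta}=\lambda\) (taking a subfamily of the size-\((2^\lambda)^{M_\delta}\) family and transferring it to subsets of \(\gamma\) along an \(M_\delta\)-bijection). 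By \cref{IndAbsolute}, \(F\) is \(\kappa\)-independent in \(V\) as well.

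Next I would show \(P(F)\subseteq M_\delta\). Given \(S\subseteq F\), the family \(S\cup\{\gamma\setminus A:A\in F\setminus S\}\) generates a \(\kappa\)-complete filter on \(\gamma\) by \cref{IndependentFilter}, and it is generated by at most \(|\gamma|\) sets, so by \cref{StrongcompactnessEquiv}(3) (\(\kappa\) being \(\gamma\)-strongly compact in \(V\)) it extends to a \(\kappa\)-complete ultrafilter \(U\) on \(\gamma\). Let \(\bar U\) be its uniform restriction, with \(\alpha=\textsc{sp}(\bar U)\leq\gamma<\delta\), so that \(\bar U\in\Un_{<\delta}\) and \(\bar U\cap M_\delta\in M_\delta\) by \cref{KetonenAbsorption}. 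For \(A\in F\) we have \(A\in U\) if and only if \(A\cap\alpha\in\bar U\), and \(A\cap\alpha\in M_\delta\); hence \(S=\{A\in F:A\cap\alpha\in\bar U\cap M_\delta\}\) is definable in \(M_\delta\) from \(F\), \(\alpha\), and \(\bar U\cap M_\delta\), so \(S\in M_\delta\). Since \(F\in M_\delta\) and \(|F|^{M_\delta}=|\gamma|^{M_\delta}\), transferring \(P(F)\subseteq M_\delta\) along a bijection \(F\to\gamma\) in \(M_\delta\) yields \(P(\gamma)\subseteq M_\delta\).

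The main obstacle is the same as in \cref{TightCover}: one must manufacture enough strong compactness inside \(M_\delta\) to run Hausdorff's construction there, and then carefully reduce each \(\kappa\)-complete ultrafilter on \(\gamma\) to its uniform restriction so that it falls within the scope of \cref{KetonenAbsorption}. The feature that makes this unconditional version succeed is precisely that every ultrafilter entering the argument lives on an ordinal below \(\delta\), so \cref{KetonenAbsorption} applies without any appeal to the dichotomy of \cref{CoverDichotomy}. This is also exactly why the argument halts at \(\gamma<\delta\) and cannot be pushed to \(P(\delta)\subseteq M_\delta\): the relevant ultrafilter \(U_\delta\) has space \(\sup j_\delta[\delta]\geq\delta\), so it lies outside \(\Un_{<\delta}\) and need not be absorbed, in accordance with the still-open \cref{TightCoverQ}.
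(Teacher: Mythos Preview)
Your argument is correct and is precisely the approach the paper intends; the paper gives no separate proof, merely remarking that ``a very similar argument'' to that of \cref{TightCover} yields the result, and you have supplied the details faithfully, including the crucial observation that \cref{KetonenAbsorption} replaces the bad case of \cref{CoverDichotomy} once all the ultrafilters involved have space below~$\delta$.

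One small addition is needed: Solovay's theorem gives \((\lambda^{<\kappa})^{M_\delta}=\lambda\) only when \(\textnormal{cf}^{M_\delta}(\lambda)\geq\kappa\), so as written the Hausdorff step fails if \(\gamma\) has cofinality below~\(\kappa\). The fix is routine: reduce first to the case that \(\gamma\) is a regular cardinal of \(V\) (hence of \(M_\delta\), so \(\lambda=\gamma\) and Solovay applies). For \(\gamma\) of cofinality \({<}\kappa\), write \(\gamma=\sup_{i<\textnormal{cf}(\gamma)}\gamma_i\) with each \(\gamma_i\) regular; then any \(A\subseteq\gamma\) is the union of the sequence \(\langle A\cap\gamma_i:i<\textnormal{cf}(\gamma)\rangle\), which lies in \(M_\delta\) by the regular case together with \((M_\delta)^{<\kappa}\subseteq M_\delta\).
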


Combining these facts, we obtain supercompactness:
\begin{thm}[UA]
Suppose \(\delta\) is regular. Let \(\kappa = \textsc{crt}(j_\delta)\) and assume \(\kappa\) is \(\delta\)-strongly compact. Then \((M_\delta)^\gamma\subseteq M_\delta\) for all \(\gamma < \delta\).
\begin{proof}
We may assume by induction that for all \(\bar \gamma < \gamma\), \((M_\delta)^{\bar \gamma}\subseteq M_\delta\). Then if \(\gamma\) is singular, it is immediate that \((M_\delta)^\gamma\subseteq M_\delta\). Therefore we may assume that \(\gamma\) is regular. 

Let \(\gamma' = \text{cf}^{M_\delta}(\sup j_\delta[\gamma])\). By the tight covering property, \(\gamma' \leq \delta\). But \(\gamma'\neq \delta\) since \(\text{cf}(\gamma') = \text{cf}(\sup j_\delta[\gamma]) = \gamma\). Therefore \(\gamma' < \delta\). By \cref{KetonenCov}, it follows that there is a set \(A'\in M_\delta\) containing \(j_\delta[\gamma]\) such that \(|A'|^{M_\delta}  = \gamma'\). Since \(\gamma' < \delta\), \(P(\gamma')\subseteq M_\delta\). Since \(|A'|^{M_\delta} = \gamma'\), \(P(A')\subseteq M_\delta\). Since \(j_\delta[\gamma]\subseteq A'\), it follows that \(j_\delta[\gamma]\in M_\delta\).
\end{proof}
\end{thm}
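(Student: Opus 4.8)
The plan is to induct on \(\gamma < \delta\). Assuming \((M_\delta)^{\bar\gamma} \subseteq M_\delta\) for all \(\bar\gamma < \gamma\), I would treat the singular and regular cases separately, and in the regular case reduce everything to the single statement \(j_\delta[\gamma] \in M_\delta\).

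For singular \(\gamma\), write \(\gamma = \sup_{i < \mu} \gamma_i\) with \(\mu = \textnormal{cf}(\gamma) < \gamma\). A \(\gamma\)-sequence from \(M_\delta\) splits into its \(\mu\)-indexed family of blocks \(\langle s_\alpha : \gamma_i \le \alpha < \gamma_{i+1}\rangle\); each block has length \(<\gamma\) and so lies in \(M_\delta\) by the inductive hypothesis, and the resulting \(\mu\)-sequence of blocks also lies in \(M_\delta\) by the inductive hypothesis, so \(M_\delta\) reassembles the original sequence by concatenation.

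For regular \(\gamma\), I would first invoke the standard fact that for the ultrapower embedding \(j_\delta = j_{U_\delta}\), closure under \(\gamma\)-sequences follows once \(j_\delta[\gamma] \in M_\delta\): writing an arbitrary \(\gamma\)-sequence of elements of \(M_\delta\) as \(\langle j_\delta(f_\alpha)([\text{id}]_{U_\delta}) : \alpha < \gamma\rangle\), the value of \(j_\delta\) on \(\langle f_\alpha : \alpha < \gamma\rangle\) lives in \(M_\delta\), and given \(j_\delta[\gamma]\) (whose increasing enumeration is \(j_\delta\restriction\gamma\)) together with the seed \([\text{id}]_{U_\delta}\), the model \(M_\delta\) recovers \(\alpha \mapsto j_\delta(f_\alpha)([\text{id}]_{U_\delta})\) internally. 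So it suffices to get \(j_\delta[\gamma] \in M_\delta\). Set \(\gamma' = \textnormal{cf}^{M_\delta}(\sup j_\delta[\gamma])\). Since \(|j_\delta[\gamma]| = \gamma \le \delta\), the tight covering property of \cref{TightCover} yields \(A' \in M_\delta\) with \(j_\delta[\gamma] \subseteq A'\) and \(|A'|^{M_\delta} \le \delta\); as \(A' \cap \sup j_\delta[\gamma]\) is then cofinal in \(\sup j_\delta[\gamma]\), this bounds \(\gamma' \le \delta\). I would then rule out \(\gamma' = \delta\): any \(M_\delta\)-cofinal map out of \(\gamma'\) witnesses \(\textnormal{cf}(\gamma') = \textnormal{cf}(\sup j_\delta[\gamma]) = \gamma < \delta\), whereas \(\delta\) is regular, forcing \(\gamma' < \delta\). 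Now \cref{KetonenCov} provides \(A' \in M_\delta\) covering \(j_\delta[\gamma]\) with \(|A'|^{M_\delta} = \gamma' < \delta\); the preceding theorem gives \(P(\gamma') \subseteq M_\delta\), hence \(P(A') \subseteq M_\delta\) via an \(M_\delta\)-bijection between \(A'\) and \(\gamma'\), and therefore \(j_\delta[\gamma] \in M_\delta\) since \(j_\delta[\gamma] \subseteq A'\).

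The hard part lies entirely in the regular case, and specifically in the inequality \(\gamma' < \delta\): one must show that \(M_\delta\) does not inflate the cofinality of \(\sup j_\delta[\gamma]\) all the way up to \(\delta\). This is where the two nontrivial inputs combine — the tight covering property of \cref{TightCover} caps \(\gamma'\) at \(\delta\), while the regularity of \(\delta\) together with the fact that \(\gamma'\) must carry true cofinality \(\gamma\) excludes the boundary case \(\gamma' = \delta\). Once \(\gamma' < \delta\), absorption of \(P(\gamma')\) from the preceding theorem converts covering into genuine membership \(j_\delta[\gamma] \in M_\delta\), and the reduction does the rest.
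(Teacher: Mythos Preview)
Your proposal is correct and follows essentially the same approach as the paper: induct on \(\gamma\), dispose of the singular case by the inductive hypothesis, and in the regular case show \(\gamma' = \textnormal{cf}^{M_\delta}(\sup j_\delta[\gamma]) < \delta\) via tight covering plus the cofinality observation \(\textnormal{cf}(\gamma') = \gamma\), then use \cref{KetonenCov} and the preceding theorem on \(P(\gamma')\) to conclude \(j_\delta[\gamma]\in M_\delta\). You simply spell out in more detail the points the paper leaves implicit (the singular concatenation, the reduction to \(j_\delta[\gamma]\in M_\delta\), and the covering argument behind \(\gamma'\le\delta\)).
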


The following is a glaring open question:
\begin{qst}[UA]
Let \(\kappa = \textsc{crt}(j_\delta)\) and assume \(\kappa\) is \(\delta\)-strongly compact.  Is \(M_\delta\) is closed under \(\delta\)-sequences?
\end{qst}

We can answer this question when \(\delta\) is a successor cardinal:
\begin{thm}[UA]
Suppose \(\delta\) is regular but not strongly inaccessible. Let \(\kappa = \textsc{crt}(j_\delta)\) and assume \(\kappa\) is \(\delta\)-strongly compact. Then \(M_\delta\) is closed under \(\delta\)-sequences.
\begin{proof}
We use the following fact:
\begin{lma}
Suppose \(M\) is an ultrapower of \(V\) and \(\lambda\) is a cardinal. Then the following are equivalent:
\begin{enumerate}[(1)]
\item \(M^\lambda\subseteq M\).
\item \(M\) has the tight covering property at \(\lambda\) and \(P(\lambda)\subseteq M\).\qed
\end{enumerate}
\end{lma}
Therefore by \cref{TightCover}, it suffices to show that \(P(\delta)\subseteq M_\delta\). There are two cases:
\begin{case}\label{BigCofCase}
For some cardinal \(\gamma < \delta\) of cofinality at least \(\kappa\), \(2^\gamma\geq \delta\).
\end{case}
Note that by the proof of \cref{StrongCompactAbs} in the proof of \cref{TightCover}, \(\kappa\) is \({<}\delta\)-strongly compact in \(M_\delta\). Therefore by Solovay's theorem \cite{Solovay}, \((\gamma^{<\kappa})^{M_\delta} =\gamma\). By \cref{Hausdorff}, it follows that in \(M_\delta\), there is a \(\kappa\)-independent family \(F\) of subsets of \(\gamma\) such that \(|F|^{M_\delta} = \delta\). As in \cref{PowerClm} in the proof of \cref{TightCover}, \(P(F)\subseteq M_\delta\), so \(P(\delta)\subseteq M_\delta\).
\begin{case}
Otherwise.
\end{case}
In this case, \(\delta\) must be the successor of a singular cardinal of cofinality less than \(\kappa\). To see this, let \(\lambda < \delta\) be a cardinal such that \(2^\lambda\geq \delta\). Then \(2^{(\lambda^+)}\geq \delta\), and so since we are not in \cref{BigCofCase}, \(\lambda^+ = \delta\). Similarly since we are not in \cref{BigCofCase}, \(\text{cf}(\lambda) < \kappa\). We use the following standard fact:
\begin{lma}\label{SmallCofSuper} Suppose \(j : V\to M\) is an elementary embedding with critical point \(\kappa\). Suppose \(M\) is closed under \(\lambda\)-sequences. Then \(M\) is closed under \(\lambda^{<\kappa}\)-sequences.
\begin{proof}
Since \(|P_\kappa(\lambda)| = \lambda^{<\kappa}\), it suffices to show that \(j\restriction P_\kappa(\lambda)\in M\). For \(\sigma\in P_\kappa(\lambda)\), \(j(\sigma) = j[\sigma]\). But \(j\restriction \lambda \in M\), so the function \(\sigma\mapsto j[\sigma]\) belongs to \(M\).
\end{proof}
\end{lma}
By Konig's theorem \(\lambda^{<\kappa} \geq \lambda^+ = \delta\). Thus by \cref{SmallCofSuper}, \((M_\delta)^\delta\subseteq M_\delta\). 
\end{proof}
\end{thm}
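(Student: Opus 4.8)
The plan is to reduce closure under $\delta$-sequences to the single absorption statement $P(\delta)\subseteq M_\delta$ and then to prove that statement by a case analysis on the cardinal arithmetic below $\delta$. First I would record the standard equivalence that, for an ultrapower $M$ of $V$ and a cardinal $\lambda$, one has $M^\lambda\subseteq M$ if and only if $M$ has the tight covering property at $\lambda$ and $P(\lambda)\subseteq M$. The forward direction is trivial; for the reverse, the range of a $\lambda$-sequence from $M$ is tightly covered by some $A'\in M$, and relative to a fixed enumeration of $A'$ inside $M$ the sequence is coded by a subset of $\lambda$, which lies in $M$ by hypothesis. Since \cref{TightCover} already furnishes the tight covering property at $\delta$ under the present assumptions, this collapses the theorem to showing $P(\delta)\subseteq M_\delta$.

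To prove $P(\delta)\subseteq M_\delta$, I would use that $\delta$ is regular but not strongly inaccessible to fix a cardinal $\lambda<\delta$ with $2^\lambda\geq\delta$; because $\kappa$ is measurable and hence a strong limit, any such $\lambda$ satisfies $\lambda\geq\kappa$. The argument then splits according to whether the power-set growth can be located at a cardinal of cofinality at least $\kappa$.

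In the first case, some $\gamma<\delta$ with $\mathrm{cf}(\gamma)\geq\kappa$ already satisfies $2^\gamma\geq\delta$, and I would transplant the independent-family construction of \cref{TightCover}. One first checks that $\kappa$ is ${<}\delta$-strongly compact inside $M_\delta$; this runs parallel to \cref{StrongCompactAbs}, except that the ultrafilter absorption it needs is now supplied by \cref{KetonenAbsorption}---legitimate because the relevant ultrafilters live on ordinals below $\delta$, hence in $\Un_{<\delta}$---rather than by the failure of tight covering. Solovay's theorem then gives $(\gamma^{<\kappa})^{M_\delta}=\gamma$, so \cref{Hausdorff} applied in $M_\delta$ produces a $\kappa$-independent family $F$ of subsets of $\gamma$ with $|F|^{M_\delta}=\delta$, which by \cref{IndAbsolute} remains $\kappa$-independent in $V$. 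For each $S\subseteq F$, I extend the $\kappa$-complete filter generated by $S\cup\{\gamma\setminus A:A\in F\setminus S\}$ to a $\kappa$-complete ultrafilter $U$ on $\gamma$ using $\delta$-strong compactness in $V$; since $U\in\Un_{<\delta}$, \cref{KetonenAbsorption} gives $U\cap M_\delta\in M_\delta$, whence $S=(U\cap M_\delta)\cap F\in M_\delta$. Thus $P(F)\subseteq M_\delta$, and since $|F|^{M_\delta}=\delta$ this yields $P(\delta)\subseteq M_\delta$.

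The main obstacle is the remaining case, where no high-cofinality witness exists and the independent-family method is unavailable. Here I would first argue that $\delta$ must be the successor of a singular cardinal $\lambda$ of cofinality less than $\kappa$: for any witness $\lambda<\delta$ with $2^\lambda\geq\delta$, if $\lambda^+$ were below $\delta$ it would be a regular---hence cofinality $\geq\kappa$---witness, forcing $\lambda^+=\delta$, and $\mathrm{cf}(\lambda)\geq\kappa$ would likewise place $\lambda$ in the first case. Now $M_\delta$ is already closed under $\lambda$-sequences since $\lambda<\delta$ (by the preceding theorem), so the standard fact that an embedding with critical point $\kappa$ upgrades $\lambda$-closure to $\lambda^{<\kappa}$-closure---which holds because $j_\delta\restriction P_\kappa(\lambda)\in M_\delta$---gives $(M_\delta)^{\lambda^{<\kappa}}\subseteq M_\delta$. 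Finally König's theorem yields $\lambda^{\mathrm{cf}(\lambda)}>\lambda$, and since $\mathrm{cf}(\lambda)<\kappa$ we get $\lambda^{<\kappa}\geq\lambda^{\mathrm{cf}(\lambda)}>\lambda$, so $\lambda^{<\kappa}\geq\lambda^+=\delta$ and therefore $(M_\delta)^\delta\subseteq M_\delta$. This case is the delicate one precisely because the combinatorial machinery driving the rest of the section fails, and the conclusion instead rides on the interaction between the critical point of $j_\delta$ and the singular-cardinal arithmetic at $\lambda$.
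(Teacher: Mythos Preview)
Your proposal is correct and follows essentially the same route as the paper's proof: reduce to $P(\delta)\subseteq M_\delta$ via tight covering, then handle the independent-family case and the singular-predecessor case exactly as the paper does. Your explicit remark that the ultrafilter absorption in Case~1 now comes from \cref{KetonenAbsorption} (since the relevant ultrafilters lie in $\Un_{<\delta}$) rather than from the failure of tight covering is a helpful clarification of what the paper's references to \cref{StrongCompactAbs} and \cref{PowerClm} actually mean in this context.
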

\subsection{The least strongly compact cardinal}
We must now say something about the hypothesis that \(\textsc{crt}(j_\delta)\) is \(\delta\)-strongly compact. The following is Theorem 7.1 of \cite{Frechet}:
\begin{thm}[UA]
Suppose \(\delta\) is a successor cardinal or a strongly inaccessible cardinal. Then \(\textsc{crt}(j_\delta)\) is \(\delta\)-strongly compact.
\end{thm}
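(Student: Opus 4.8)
The plan is to show that the Ketonen embedding \(j_\delta : V\to M_\delta\) itself witnesses the \(\delta\)-strong compactness of \(\kappa = \textsc{crt}(j_\delta)\). (We may assume \(\delta\) is uniform; otherwise \(j_\delta\) is the identity and there is nothing to prove.) By the covering formulation of the definition of strong compactness, an embedding \(j : V\to M\) with critical point \(\kappa\) witnesses that \(\kappa\) is \(\delta\)-strongly compact precisely when \(M\) has the \((\delta,{<}j(\kappa))\)-covering property. Since \(\textsc{crt}(j_\delta) = \kappa\), it therefore suffices to show that \(M_\delta\) has the \((\delta,{<}j_\delta(\kappa))\)-covering property. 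But this is exactly the conclusion of Ketonen's regularity theorem \cref{KetonenRegularity} applied with \(\gamma = \kappa\), \emph{provided} its hypothesis is met. Thus the whole theorem reduces to the following gap-freeness statement, which I will call the \textbf{Main Claim}: every regular cardinal \(\gamma\) with \(\kappa\leq\gamma\leq\delta\) is uniform.

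The two endpoints of the interval are immediate: \(\kappa\) is the critical point of a nontrivial ultrapower embedding, hence measurable, hence uniform; and \(\delta\) is uniform by assumption, since it carries \(U_\delta\). So everything rests on ruling out a non-uniform regular cardinal strictly between \(\kappa\) and \(\delta\). I would attack this through a minimal-counterexample argument: let \(\gamma_0\) be the least cardinal such that every regular cardinal in \([\gamma_0,\delta]\) is uniform (this is well defined, as the relevant collection of cardinals is an end-segment containing \(\delta\)), and try to prove \(\gamma_0\leq\kappa\), which is equivalent to the Main Claim. Assuming toward a contradiction that \(\gamma_0 > \kappa\), \cref{KetonenRegularity} still applies at \(\gamma_0\) and endows \(M_\delta\) with the \((\delta,{<}j_\delta(\gamma_0))\)-covering property, which asserts that \(\kappa\) is ``\(\delta\)-strongly compact up to the wrong image bound,'' namely \(j_\delta(\gamma_0)\) rather than \(j_\delta(\kappa)\).

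The crux --- and where I expect the real difficulty --- is converting this near-miss into an outright contradiction, and this is precisely where the hypothesis that \(\delta\) is a successor or strongly inaccessible cardinal is indispensable. The idea would be to combine the absorption of countably complete ultrafilters by \(M_\delta\) (\cref{KetonenAbsorption}) with the universal property \(\KU\) (\cref{KetonenUniversality}) to control the uniform cardinals below \(\gamma_0\) relative to \(\textsc{crt}(j_\delta)\). The arithmetic hypothesis controls \(\gamma^{<\kappa}\) for \(\gamma<\delta\) --- in the successor case because there are few cardinals in play, and in the strongly inaccessible case because \(\delta\) is a strong limit --- which is exactly what is needed to run Hausdorff's independent-family construction \cref{Hausdorff} and thereby convert absorbed ultrafilters into absorbed subsets, the same device used in the proofs of \cref{TightCover} and \cref{PowerClm}. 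Carrying this analysis out should force \(\gamma_0 = \kappa\). When \(\delta\) is weakly but not strongly inaccessible this conversion breaks down, and indeed the conclusion can genuinely fail, which is why that case is excluded.

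I should be candid that the Main Claim is the substantial content of the theorem. The reduction of the first paragraph is essentially formal, but establishing the absence of gaps in the uniform cardinals throughout \([\kappa,\delta]\) is where all the work lies, and a complete proof draws on the full analysis of the Ketonen order, translation functions, and internal-ultrapower characterizations developed in \cite{Frechet}.
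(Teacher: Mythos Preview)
The paper does not prove this theorem: it is quoted verbatim as Theorem 7.1 of \cite{Frechet} and treated as a black box. There is therefore no ``paper's own proof'' to compare against beyond the bare citation.

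Your reduction is correct and is the obvious one: once the Main Claim holds, \cref{KetonenRegularity} with \(\gamma = \kappa\) immediately gives the \((\delta,{<}j_\delta(\kappa))\)-covering property, which is exactly the statement that \(j_\delta\) witnesses the \(\delta\)-strong compactness of \(\kappa\). Your observation that the Main Claim is equivalent to the theorem is also right (the converse direction is \cref{StrongcompactnessEquiv} (4)).

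Where your proposal thins out is in the sketch of the Main Claim itself. You invoke the independent-family device from \cref{TightCover} and \cref{PowerClm}, but note that in this paper those arguments \emph{assume} the very conclusion you are trying to prove --- \(\kappa\) is \(\delta\)-strongly compact --- as a hypothesis, in order to get Solovay's \(\delta^{<\kappa}=\delta\) and to extend the relevant \(\kappa\)-complete filters. So the device cannot be lifted wholesale; one must find a substitute source of strong compactness below \(\gamma_0\), or argue some other way entirely. You acknowledge this in your final paragraph by deferring to the machinery of \cite{Frechet}, which is exactly what the paper does. In that sense your proposal and the paper are on equal footing: both identify the Main Claim as the real content and both outsource its proof to \cite{Frechet}. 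What you have written is a correct framing and an honest assessment of where the difficulty lies, but not an independent proof.
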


Thus we have the following facts:

\begin{thm}[UA]\label{LeastSuper}
Suppose \(\delta\) is a successor cardinal that carries a uniform countably complete ultrafilter. Then \(j_\delta : V\to M_\delta\) witnesses that \(\textsc{crt}(j_\delta)\) is \(\delta\)-supercompact.\qed
\end{thm}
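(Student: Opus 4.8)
The plan is to read the statement off from the two theorems immediately preceding it, together with a short verification that the hypothesis makes everything nontrivial. First I would record that, since \(\delta\) is a regular cardinal (being a successor) carrying a uniform countably complete ultrafilter, \(\delta\) is uniform; hence the Ketonen ultrafilter \(U_\delta\) is an honest uniform ultrafilter on \(\delta\) rather than the principal ultrafilter \(P_\delta\), as noted after the definition of Ketonen ultrafilters. Consequently \(j_\delta\) is discontinuous at \(\delta\): by uniformity of \(U_\delta\) one has \(\sup j_\delta[\delta]\leq [\textnormal{id}]_{U_\delta} < j_\delta(\delta)\). Discontinuity at \(\delta\) forces \(\kappa = \textsc{crt}(j_\delta)\leq \delta\), so that the phrase ``\(\delta\)-supercompact'' is a meaningful assertion about \(\kappa\).

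Next I would apply Theorem 7.1 of \cite{Frechet} (stated just above): because \(\delta\) is a successor cardinal, \(\kappa=\textsc{crt}(j_\delta)\) is \(\delta\)-strongly compact. This is exactly the hypothesis of the preceding theorem, which says that if \(\delta\) is regular but not strongly inaccessible and \(\textsc{crt}(j_\delta)\) is \(\delta\)-strongly compact, then \(M_\delta\) is closed under \(\delta\)-sequences. A successor cardinal is regular and manifestly not strongly inaccessible, so that theorem applies and yields \((M_\delta)^\delta\subseteq M_\delta\).

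Finally I would assemble the conclusion: \(j_\delta : V\to M_\delta\) is an elementary embedding with critical point \(\kappa\leq\delta\) whose target is an inner model closed under \(\delta\)-sequences, and this is by definition a witness that \(\kappa\) is \(\delta\)-supercompact. Since the substantive work is entirely contained in the two cited theorems, there is no real obstacle here; the only step demanding attention—and the reason the statement is not vacuous—is the opening bookkeeping that converts the hypothesis ``\(\delta\) carries a uniform countably complete ultrafilter'' into the nontriviality of \(j_\delta\) and the bound \(\kappa\leq\delta\). This accounts for the fact that the statement is marked as requiring no separate proof, the author evidently regarding it as an immediate corollary of the preceding two results.
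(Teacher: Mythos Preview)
Your proposal is correct and matches the paper's intended derivation: the statement is marked \qed\ precisely because it is an immediate corollary of combining Theorem~7.1 of \cite{Frechet} (giving that \(\textsc{crt}(j_\delta)\) is \(\delta\)-strongly compact when \(\delta\) is a successor) with the preceding theorem (giving \((M_\delta)^\delta\subseteq M_\delta\) once \(\textsc{crt}(j_\delta)\) is \(\delta\)-strongly compact and \(\delta\) is regular but not strongly inaccessible). Your bookkeeping that the uniformity hypothesis makes \(j_\delta\) nontrivial and forces \(\textsc{crt}(j_\delta)\leq\delta\) is the only additional remark needed, and it is correct.
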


\begin{thm}[UA]
Suppose \(\delta\) is an inaccessible cardinal that carries a uniform countably complete ultrafilter. Then \(j_\delta : V\to M_\delta\) witnesses that \(\textsc{crt}(j_\delta)\) is \({<}\delta\)-supercompact.\qed
\end{thm}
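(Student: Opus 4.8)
The plan is to obtain this as an immediate corollary of the two preceding theorems, so essentially no new work is required. First I would invoke the theorem immediately above (Theorem 7.1 of \cite{Frechet}): since \(\delta\) is strongly inaccessible, \(\kappa = \textsc{crt}(j_\delta)\) is \(\delta\)-strongly compact. The hypothesis that \(\delta\) carries a uniform countably complete ultrafilter is precisely what guarantees that \(U_\delta\) is a genuine uniform ultrafilter on \(\delta\) rather than the principal ultrafilter \(P_\delta\); thus \(j_\delta : V \to M_\delta\) is a nontrivial embedding with a well-defined critical point \(\kappa\), which is what the statement about \(\textsc{crt}(j_\delta)\) refers to.

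Next, with \(\kappa\) now known to be \(\delta\)-strongly compact, I would apply the earlier closure theorem, which asserts exactly that \((M_\delta)^\gamma \subseteq M_\delta\) for every \(\gamma < \delta\). It then remains only to unwind the definition of \({<}\delta\)-supercompactness. Since \(j_\delta : V \to M_\delta\) has critical point \(\kappa\) and \(M_\delta\) is closed under \(\gamma\)-sequences for each \(\gamma < \delta\), the \emph{single} embedding \(j_\delta\) witnesses that \(\kappa\) is \(\gamma\)-supercompact for every \(\gamma < \delta\); that is, \(j_\delta\) witnesses that \(\kappa\) is \({<}\delta\)-supercompact, as claimed. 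The only point deserving a moment's care is that one and the same witness \(j_\delta\) works uniformly across all \(\gamma < \delta\), rather than requiring a separate embedding for each \(\gamma\); this is immediate from the way the closure theorem is phrased.

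Because both the strong-compactness input and the \({<}\delta\)-closure output are already in hand, there is no genuine obstacle here, which is why the statement is marked with \(\qed\). It is worth contrasting this with the successor case (\cref{LeastSuper}), where \(j_\delta\) witnesses full \(\delta\)-supercompactness: there one additionally has \(\delta\)-closure of \(M_\delta\), whereas in the strongly inaccessible case the full \(\delta\)-closure of \(M_\delta\) is exactly the glaring open question recorded above, so that only \({<}\delta\)-closure—and hence only \({<}\delta\)-supercompactness—is available.
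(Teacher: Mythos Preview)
Your proposal is correct and matches the paper's intended reasoning: the statement is marked \(\qed\) precisely because it is an immediate corollary of the cited theorem from \cite{Frechet} (giving \(\delta\)-strong compactness of \(\kappa=\textsc{crt}(j_\delta)\)) together with the preceding closure theorem \((M_\delta)^\gamma\subseteq M_\delta\) for all \(\gamma<\delta\). Your observation about why only \({<}\delta\)-supercompactness (rather than full \(\delta\)-supercompactness) is claimed in the inaccessible case is also exactly right.
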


\begin{defn}
For any regular cardinal \(\delta\), let \(\kappa_\delta\) denote the least cardinal \(\kappa\) such that every regular cardinal in the interval \([\kappa,\delta]\) is uniform.
\end{defn}

We will have \(\kappa_\delta = \delta^+\) when \(\delta\) is not uniform.
\begin{lma}[UA]\label{crtchar}
Suppose \(\delta\) is a successor cardinal or a strongly inaccessible cardinal. If \(\delta\) is uniform then \(\textsc{crt}(j_\delta) = \kappa_\delta\).
\begin{proof}
It suffices to show that \(\textsc{crt}(j_\delta) \leq \kappa_\delta\). By \cref{KetonenRegularity}, every set \(A\subseteq M_\delta\) with \(|A|\leq \delta\) is contained in a set \(A'\in M_\delta\) such that \(|A'|^{M_\delta} < j_\delta(\kappa_\delta)\). Thus \(j_\delta(\kappa_\delta)> \delta\), so  \(\textsc{crt}(j_\delta) \leq \kappa_\delta\).
\end{proof}
\end{lma}
Using terminology inspired by Bagaria-Magidor, \(\kappa_\delta\) is the least cardinal that is \((\omega_1,\delta)\)-strongly compact. (Indeed it is the least \(\kappa\) such that every \(\kappa\)-complete filter on \(\delta\) generated by \(\delta\) sets extends to a countably complete ultrafilter.)
In particular, we have the following fact which is a version of Ketonen's theorem relating strongly compact cardinals to uniform ultrafilters on regular cardinals:
\begin{prp}\label{omega1Ketonen}
Assume \(\kappa\) is a cardinal. Then the following are equivalent:
\begin{enumerate}[(1)]
\item \(\kappa\) is the least \(\omega_1\)-strongly compact cardinal.
\item For all regular cardinals \(\delta \geq\kappa\), \(\kappa_\delta = \kappa\).\qed
\end{enumerate}
\end{prp}

A global consequence is the following fact:

\begin{thm}[UA]
The least \(\omega_1\)-strongly compact cardinal is supercompact.
\begin{proof}
Let \(\kappa\) be the least \(\omega_1\)-strongly compact cardinal. Fix a regular cardinal \(\delta \geq \kappa\). Then by \cref{omega1Ketonen} and \cref{crtchar}, \(\kappa = \kappa_\delta = \textsc{crt}(j_\delta)\) for all regular cardinals \(\delta\geq \kappa\). Therefore if \(\delta\geq \kappa\) is a successor cardinal, \(j_\delta: V\to M_\delta\) witnesses that \(\kappa\) is \(\delta\)-supercompact. Therefore \(\kappa\) is \(\delta\)-supercompact for all successor cardinals \(\delta\geq \kappa\). In other words, \(\kappa\) is supercompact.
\end{proof}
\end{thm}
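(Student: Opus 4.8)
The plan is to apply the Ketonen-embedding machinery to every successor cardinal above $\kappa$, where $\kappa$ denotes the least $\omega_1$-strongly compact cardinal. Since $\kappa$ is supercompact as soon as it is $\lambda$-supercompact for a class of $\lambda$ cofinal in the cardinals, and since $\delta$-supercompactness entails $\lambda$-supercompactness whenever $\delta\geq\lambda$, it suffices to show that $\kappa$ is $\delta$-supercompact for every successor cardinal $\delta\geq\kappa$: given any $\lambda\geq\kappa$, one simply applies this to $\delta=\lambda^+$, which is a successor cardinal above $\kappa$.

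First I would use \cref{omega1Ketonen} to convert the hypothesis on $\kappa$ into the assertion that $\kappa_\delta=\kappa$ for every regular cardinal $\delta\geq\kappa$. The key consequence I would extract is that \emph{every} regular $\delta\geq\kappa$ is uniform. This follows by unwinding the definition of $\kappa_\delta$ as the least cardinal for which every regular cardinal in the interval $[\kappa_\delta,\delta]$ is uniform: the equality $\kappa_\delta=\kappa\leq\delta$ places $\delta$ itself inside $[\kappa_\delta,\delta]$, so $\delta$ must carry a uniform countably complete ultrafilter. This is the step that genuinely ties the $\omega_1$-strong compactness of $\kappa$ to the hypotheses needed by the earlier results; I expect it to be the only subtle point, although in the end it is just an appeal to the definition of $\kappa_\delta$.

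Next, fixing a successor cardinal $\delta\geq\kappa$, I would invoke \cref{crtchar}. Since $\delta$ is a successor cardinal (hence regular) and, by the previous paragraph, uniform, that lemma yields $\textsc{crt}(j_\delta)=\kappa_\delta=\kappa$. Because $\delta$ is then a successor cardinal carrying a uniform countably complete ultrafilter, \cref{LeastSuper} applies verbatim and tells us that $j_\delta:V\to M_\delta$ witnesses that $\textsc{crt}(j_\delta)=\kappa$ is $\delta$-supercompact.

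Finally I would assemble these observations: $\kappa$ is $\delta$-supercompact for every successor cardinal $\delta\geq\kappa$, and since such $\delta$ are cofinal in the cardinals, $\kappa$ is supercompact. Beyond citing \cref{crtchar} and \cref{LeastSuper}, the only content is the bookkeeping of the first paragraph reducing full supercompactness to successor cardinals and the observation that $\kappa_\delta=\kappa$ forces $\delta$ to be uniform; every other step is an immediate appeal to the results already established.
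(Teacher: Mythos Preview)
Your proposal is correct and follows essentially the same route as the paper: use \cref{omega1Ketonen} to get $\kappa_\delta=\kappa$, then \cref{crtchar} to identify $\textsc{crt}(j_\delta)=\kappa$, and finally \cref{LeastSuper} at successor $\delta$ to obtain $\delta$-supercompactness. You are in fact slightly more careful than the paper in explicitly verifying the uniformity hypothesis of \cref{crtchar} and in naming \cref{LeastSuper}, both of which the paper leaves implicit.
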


\begin{cor}[UA]
The least strongly compact cardinal is supercompact.\qed
\end{cor}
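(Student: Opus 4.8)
The plan is to read this corollary off from the preceding theorem---that the least \(\omega_1\)-strongly compact cardinal is supercompact---by sandwiching the least strongly compact cardinal between the least \(\omega_1\)-strongly compact cardinal and itself. Write \(\kappa\) for the least strongly compact cardinal and \(\kappa_0\) for the least \(\omega_1\)-strongly compact cardinal.

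First I would verify \(\kappa_0 \leq \kappa\), which amounts to checking that \(\kappa\) is itself \(\omega_1\)-strongly compact. This is the only computation and it is routine: by \cref{StrongcompactnessEquiv} (4), every regular cardinal \(\delta \geq \kappa\) carries a \(\kappa\)-complete uniform ultrafilter, and such an ultrafilter is in particular countably complete, so every regular \(\delta \geq \kappa\) is uniform. Hence for each regular \(\delta \geq \kappa\) every regular cardinal in \([\kappa,\delta]\) is uniform, so \(\kappa_\delta \leq \kappa\); that is, \(\kappa\) is \((\omega_1,\delta)\)-strongly compact for every regular \(\delta \geq \kappa\), and therefore \(\omega_1\)-strongly compact. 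Minimality of \(\kappa_0\) then gives \(\kappa_0 \leq \kappa\).

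For the reverse inequality I would invoke the preceding theorem to conclude that \(\kappa_0\) is supercompact, hence strongly compact (every supercompact cardinal is strongly compact); since \(\kappa\) is least among strongly compact cardinals, \(\kappa \leq \kappa_0\). Combining the two inequalities gives \(\kappa = \kappa_0\), which is supercompact, as desired. There is no real obstacle here---all the substance lives in the theorem on the least \(\omega_1\)-strongly compact cardinal---and the only point requiring attention is the trivial observation that a \(\kappa\)-complete ultrafilter is countably complete, which is exactly what lets full strong compactness degrade to \(\omega_1\)-strong compactness and pin \(\kappa_0\) at or below \(\kappa\).
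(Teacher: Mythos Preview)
Your argument is correct and is exactly the intended one: the paper marks this corollary with \qed\ immediately after the theorem that the least \(\omega_1\)-strongly compact cardinal is supercompact, and your sandwich argument (\(\kappa_0\leq\kappa\) because \(\kappa\)-complete implies countably complete, \(\kappa\leq\kappa_0\) because supercompact implies strongly compact) is precisely how one reads it off.
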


\subsection{The next Fr\'echet cardinal in \(M_\delta\)}
In this subsection we establish a technical fact that will be of use in the proof of the Irreducibility Theorem below:
\begin{thm}[UA]\label{NextUniform}
Suppose \(\delta\) is a uniform regular cardinal. Let \(\kappa = \textsc{crt}(j_\delta)\), and assume \(\kappa\) is \(\delta\)-strongly compact. Suppose that in \(M_\delta\), \(\lambda\) is the least Fr\'echet cardinal above \(\delta\). Then \(\lambda\) is measurable.
\end{thm}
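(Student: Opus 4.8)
The plan is to pin down \(\lambda\) exactly by arguing that \(\lambda = j_\delta(\kappa)\); measurability is then immediate, since \(\kappa\) is measurable in \(V\) (being \(\delta\)-strongly compact) and so \(j_\delta(\kappa)\) is measurable in \(M_\delta\) by elementarity. First I would collect the structural facts already at hand. By \cref{TightCover}, \(M_\delta\) has the tight covering property at \(\delta\), so by the argument in \cref{CoverDichotomy} together with \cref{CofUniform2} we get \(\textnormal{cf}^{M_\delta}(\sup j_\delta[\delta]) = \delta\) and, crucially, that \(\delta\) is regular but \emph{not} Fr\'echet in \(M_\delta\). Moreover \(\kappa\) is \({<}\delta\)-strongly compact in \(M_\delta\) (by the proof of \cref{StrongCompactAbs}), and applying \cref{KetonenRegularity} with \(\gamma = \kappa\) — legitimate since \(\kappa\) being \(\delta\)-strongly compact makes every regular cardinal in \([\kappa,\delta]\) uniform — the model \(M_\delta\) has the \((\delta,{<}j_\delta(\kappa))\)-covering property. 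In particular \(\delta < j_\delta(\kappa)\), so \(j_\delta(\kappa)\) is a Fr\'echet (indeed measurable) cardinal of \(M_\delta\) above \(\delta\), giving the upper bound \(\lambda \le j_\delta(\kappa)\).

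Next I would set up the reduction that converts measurability into a statement about critical points. Since \(\lambda\) is Fr\'echet in \(M_\delta\), fix any uniform countably complete \(W\in M_\delta\) on \(\lambda\) and let \(\eta = \textsc{crt}(j_W^{M_\delta})\). The normal measure derived from \(j_W\) at \(\eta\) lies in \(M_\delta\), so \(\eta\) is measurable, hence Fr\'echet, in \(M_\delta\); as \(\eta \le \lambda\) and \(\lambda\) is the \emph{least} Fr\'echet cardinal above \(\delta\), and \(\delta\) itself is not Fr\'echet, we must have either \(\eta = \lambda\) or \(\eta < \delta\). In the first case \(W\) is \(\lambda\)-complete and \(\lambda\) is measurable, so it suffices to produce a single uniform countably complete ultrafilter on \(\lambda\) whose critical point is at least \(\delta\). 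Two further constraints hold for every such \(W\): because \(\delta\) is not Fr\'echet in \(M_\delta\), no partition of \(\lambda\) into \(\delta\) pieces projects to a uniform ultrafilter on \(\delta\), so \(W\) is \(\delta\)-indecomposable and \(j_W\) is continuous at \(\delta\); and by \cref{CombinatorialInternal} (with \(\delta' = \delta\)) the fact that \(W\in M_\delta\) is exactly the statement that every \(\delta\)-partition of \(\lambda\) concentrates on fewer than \(\delta\) of its pieces.

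The crux is thus to exclude the alternative that \emph{every} uniform countably complete ultrafilter on \(\lambda\) has critical point below \(\delta\); equivalently, that there is a Fr\'echet cardinal of \(M_\delta\) strictly between \(\delta\) and \(j_\delta(\kappa)\). My plan here is a translation-theoretic descent. Given \(W\) on \(\lambda\) with \(\eta = \textsc{crt}(j_W) < \delta\), let \(Z \in \Un^{M_\delta}_{<\delta}\) be the normal measure on \(\eta\) through which \(j_W\) factors; then \(W = Z^-(t_Z(W))\), and via \cref{kInternal} and \cref{Reciprocity1} one identifies \(t_Z(W)\) as a uniform ultrafilter of \(M_Z\) whose space, being \(\ge \sup j_Z[\lambda] > j_Z(\delta)\) and Fr\'echet, must equal the least Fr\'echet cardinal of \(M_Z\) above \(j_Z(\delta)\), namely \(j_Z(\lambda)\); moreover its ultrapower has critical point at least \(j_Z(\eta) > \eta\). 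So the very same configuration reappears one normal-measure ultrapower higher up, with a strictly larger critical point, and iterating yields a linear countably complete iteration of \(M_\delta\) along which the critical points strictly increase while staying below the successive images of \(\delta\). The hardest step — and the one I would expect to be the main obstacle — is to derive a contradiction from this iteration: one must show that such a critical sequence cannot persist, either by bounding the critical points below a fixed ordinal and appealing to wellfoundedness, or, more combinatorially, by using the \(U_\delta\)-product representation of \(M_\delta\)-ultrafilters (any \(W\) on a \(\mu < j_\delta(\kappa)\) is \([g]_{U_\delta}\) for a \(U_\delta\)-sequence of \(V\)-ultrafilters on cardinals below \(\kappa\)) to show that a genuinely small critical point is incompatible with uniformity on a cardinal above \(\delta\). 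Eliminating the small-critical-point case forces \(\lambda = j_\delta(\kappa)\), and measurability of \(\lambda\) in \(M_\delta\) follows.
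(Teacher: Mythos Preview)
Your reduction is sound up through the dichotomy $\eta = \lambda$ or $\eta < \delta$: it indeed suffices to produce a single uniform countably complete ultrafilter on $\lambda$ in $M_\delta$ whose associated embedding has critical point at least $\delta$. (Your opening plan to prove the stronger $\lambda = j_\delta(\kappa)$ is not needed for the theorem, and your argument would not establish it even if it succeeded.) But the crucial step --- ruling out that every such ultrafilter has small critical point --- is not carried out. Your ``translation-theoretic descent'' produces an iteration along which critical points strictly increase below successive images of $\delta$, yet an \emph{increasing} sequence of ordinals contradicts nothing; you explicitly flag this as the main obstacle and offer only vague possibilities. Even the single descent step is shaky: factoring $j_W = k\circ j_Z$ through the normal measure $Z$ derived at $\eta$ gives only $\textsc{crt}(k) > \eta$, not $\textsc{crt}(k) \geq j_Z(\eta)$ as you assert, and the identification of $\textsc{sp}(t_Z(W))$ with $j_Z(\lambda)$ needs more care.

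The paper's route avoids iteration entirely. It works with the one canonical ultrafilter $U = (U_\lambda)^{M_\delta}$ and imports a structural fact from \cite{Frechet} (Theorem~9.2): since $\lambda$ is isolated in $M_\delta$, $U$ is $\gamma$-supercompact for every Fr\'echet $\gamma < \lambda$, and in particular ${<}\delta$-supercompact in $M_\delta$. Assuming $\textsc{crt}(j_U^{M_\delta}) < \delta$, Kunen's inconsistency theorem then forces $j_U^{M_\delta}(\delta) > \delta$. Using this, one shows via \cref{KetonenInternal} and \cref{IsolatedInternal} that every countably complete $M_U^{M_\delta}$-ultrafilter on $\delta$ already lies in $M_U^{M_\delta}$; the independent-families argument of \cref{TightCover} then yields $P(\delta)\subseteq M_U^{M_\delta}$, whence $U_\delta\in M_\delta$, a contradiction. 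The supercompactness of $U_\lambda$ is the missing lever: it is what converts a small critical point into enough absorption to trap $U_\delta$, and nothing in your descent supplies a substitute.
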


For the proof we need some facts about Fr\'echet cardinals from \cite{Frechet}.

\begin{defn}[UA]
For any Fr\'echet cardinal \(\lambda\), let \(U_\lambda\) denote the \(\sE\)-least Fr\'echet uniform ultrafilter on \(\lambda\). 
\end{defn}

Thus \(U_\lambda\) generalizes \(U_\delta\) to all Fr\'echet cardinals. It turns out that the following notion sometimes suffices as a substitute for regularity in the analysis of \(U_\lambda\):
\begin{defn}
A Fr\'echet cardinal \(\lambda\) is {\it isolated} if \(\lambda\) is a limit cardinal but \(\lambda\) is not a limit of Fr\'echet cardinals.
\end{defn}
A reasonable conjecture is that any isolated cardinal is measurable, but we do not know how to prove this from UA alone. (UA + GCH suffices.)
\begin{defn}
For any cardinal \(\gamma\), \(\gamma^\sigma\) denotes the least Fr\'echet cardinal \(\lambda\) such that \(\lambda > \gamma\).
\end{defn}

The following is a restatement of \cite{Frechet} Lemma 3.3:

\begin{prp}\label{IsolationLimit}
A cardinal \(\lambda\) is isolated if and only if \(\lambda = \gamma^\sigma\) for some cardinal \(\gamma\) with \(\gamma^+ < \lambda\).\qed
\end{prp}

Corollary 8.9 from \cite{Frechet} gives an example of how isolation can stand in for regularity in some of our arguments:
\begin{prp}[UA]\label{IsolatedInternal}
If \(\lambda\) is isolated then for any countably complete \(M_{U_\lambda}\)-ultrafilter \(W\) on an ordinal less than \(\sup j_{U_\lambda}[\lambda]\), \(W\in M_{U_\lambda}\).\qed
\end{prp}

\begin{proof}[Proof of \cref{NextUniform}]
First note that \(\lambda > \delta^+\): \(\delta\) is not uniform in \(M_\delta\) by \cref{TightCover}, and a theorem of Prikry \cite{Prikry} shows that if a regular cardinal \(\delta\) is not uniform, neither is its successor. Therefore by \cref{IsolationLimit}, \(\lambda\) is isolated. In other words, \(\lambda\) is a limit cardinal.

If \(M_\delta\) satisfies that there is a measurable cardinal in the interval \([\delta,\lambda]\), then \(\lambda\) is measurable. Let \(U = (U_\lambda)^{M_\delta}\). To show \(\lambda\) is measurable, it suffices to show that \(\textsc{crt}(j^{M_\delta}_{U}) \geq \delta\). Assume towards a contradiction that \(\textsc{crt}(j^{M_\delta}_{U}) < \delta\).

By  \cite{Frechet} Theorem 9.2 applied in \(M_\delta\), \(M_\delta\) satisfies that \(U\) is \(\gamma\)-supercompact for all Fr\'echet cardinals \(\gamma < \lambda\). Since \(\textsc{crt}(j_\delta)\) is \(\delta\)-strongly compact, \(\textsc{crt}(j_\delta)\) is \({<}\delta\)-strongly compact in \(M_\delta\) by \cref{KetonenInternal}, and hence \(U\) is \({<}\delta\)-supercompact in \(M_\delta\). By the Kunen inconsistency theorem, since \(\textsc{crt}(j^{M_\delta}_{U}) < \delta\) and \(U\) is \({<}\delta\)-supercompact in \(M_\delta\), we must have \(j^{M_\delta}_{U}(\delta) > \delta\).
\begin{clm}\label{InternalClm}
Suppose \(W\) is a countably complete \(M_{U}^{M_\delta}\)-ultrafilter on \(\delta\). Then \(W\in M_{U}^{M_\delta}\).
\begin{proof}
By \cref{IsolatedInternal} applied in \(M_\delta\), it suffices to show that \(W\in M_\delta\). Let \(i : M_{U}^{M_\delta}\to N\) be the ultrapower of \(M_{U}^{M_\delta}\) by \(W\) using functions in \(M_{U}^{M_\delta}\). Since \(\delta < j_{U}^{M_\delta}(\delta)\), and \(j_{U}^{M_\delta}(\delta)\) is regular in \(M_{U}^{M_\delta}\), \(i\) is continuous at \(j_{U}^{M_\delta}(\delta)\). Since \(\delta\) is not uniform in \(M_\delta\), \(j_{U}^{M_\delta}\) is continuous at \(\delta\). Therefore \(i\circ j_{U}^{M_\delta}\) is continuous at \(\delta\). It follows from \cref{KetonenInternal} that \(i\circ j_{U}^{M_\delta}\) is an internal ultrapower embedding of \(M_\delta\). Let \(\xi = [\text{id}]_{U}^{M_\delta}\). Then for any \(f\in M_\delta\),
\[i([f]_{U}^{M_\delta}) = i\circ j_{U}^{M_\delta}(f)(i(\xi))\]
Thus \(i\) can be defined within \(M_\delta\). Therefore \(W\in M_\delta\), as desired.
\end{proof}
\end{clm}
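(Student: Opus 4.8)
The plan is to run exactly the reduction built into the hypotheses. Since \(\lambda\) has already been shown to be a limit cardinal, it is isolated in \(M_\delta\), and since \(\delta < \lambda \leq \sup j_U^{M_\delta}[\lambda]\), the ordinal \(\delta\) on which \(W\) lives falls strictly below \(\sup j_U^{M_\delta}[\lambda]\). Thus \cref{IsolatedInternal}, read inside \(M_\delta\) with \(M_U^{M_\delta}\) in the role of \(M_{U_\lambda}\), will apply to any \(W\in M_\delta\) that \(M_\delta\) recognizes as a countably complete \(M_U^{M_\delta}\)-ultrafilter on \(\delta\); countable completeness is absolute to \(M_\delta\) because \(M_\delta\) is closed under \(\omega\)-sequences. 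Consequently the entire content of the claim collapses to showing that \(W\in M_\delta\), after which \cref{IsolatedInternal} in \(M_\delta\) immediately upgrades this to \(W\in M_U^{M_\delta}\).

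To show \(W\in M_\delta\), I would form the (possibly external) ultrapower \(i : M_U^{M_\delta}\to N\) of \(M_U^{M_\delta}\) by \(W\) using functions in \(M_U^{M_\delta}\), and prove that the composite \(i\circ j_U^{M_\delta} : M_\delta\to N\) is an internal ultrapower embedding of \(M_\delta\). This rests on two continuity facts. First, \(i\) is continuous at \(j_U^{M_\delta}(\delta)\): that ordinal is the image of the \(M_\delta\)-regular cardinal \(\delta\), hence regular in \(M_U^{M_\delta}\), and it strictly exceeds \(\delta\), so every function from \(\delta\) into it lying in \(M_U^{M_\delta}\) is bounded and \(i[j_U^{M_\delta}(\delta)]\) is cofinal in its image. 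Second, \(j_U^{M_\delta}\) is continuous at \(\delta\): the ultrafilter \(U=(U_\lambda)^{M_\delta}\) lies in \(M_\delta\), so \(j_U^{M_\delta}\) is internal to \(M_\delta\), and since \(\delta\) is not tail uniform in \(M_\delta\) by \cref{TightCover}, \cref{TailUnifChar} read inside \(M_\delta\) forbids any internal ultrapower of \(M_\delta\) from being discontinuous at \(\delta\). Composing gives continuity of \(i\circ j_U^{M_\delta}\) at \(\delta\); and because tight covering yields \(\text{cf}^{M_\delta}(\sup j_\delta[\delta]) = \delta\), a cofinal map \(\delta\to\sup j_\delta[\delta]\) in \(M_\delta\) transfers this to continuity at \(\sup j_\delta[\delta]\). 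Noting that \(i\circ j_U^{M_\delta}\) is an ultrapower embedding of \(M_\delta\) (a composite of ultrapowers, with the two generators \(i([\text{id}]_U^{M_\delta})\) and the seed of \(i\) coded as one point via \(N\)'s pairing), \cref{KetonenInternal} then makes it internal to \(M_\delta\); in particular \(N\) is an inner model of \(M_\delta\), so \(N\subseteq M_\delta\).

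The final step is to exhibit \(W\) inside \(M_\delta\). Writing \(\xi = [\text{id}]_U^{M_\delta}\), every element of \(M_U^{M_\delta}\) has the form \(j_U^{M_\delta}(f)(\xi)\) with \(f\in M_\delta\), and applying \(i\) gives \(i(j_U^{M_\delta}(f)(\xi)) = (i\circ j_U^{M_\delta})(f)(i(\xi))\). Since \(i\circ j_U^{M_\delta}\) is internal, hence definable over \(M_\delta\), and \(i(\xi)\in N\subseteq M_\delta\), this formula defines the action of \(i\) from parameters available in \(M_\delta\); its restriction to \(P(\delta)^{M_U^{M_\delta}}\) is therefore a set belonging to \(M_\delta\). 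As the seed \(s\) of \(i\) also lies in \(N\subseteq M_\delta\), the ultrafilter \(W = \{A\in P(\delta)^{M_U^{M_\delta}} : s\in i(A)\}\) is definable over \(M_\delta\) from set parameters in \(M_\delta\), so \(W\in M_\delta\). I expect the genuine obstacle to be precisely this definability step: one must verify that the factorization is independent of the chosen representative \(f\), and — the crucial leverage — that internality of \(i\circ j_U^{M_\delta}\) really does drag \(N\), and with it both \(i(\xi)\) and the seed \(s\), down into \(M_\delta\), rather than leaving them as objects available only externally.
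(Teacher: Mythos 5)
Your proof is correct and follows essentially the same route as the paper's: reduce to \(W\in M_\delta\) via \cref{IsolatedInternal} applied in \(M_\delta\), establish continuity of \(i\circ j_U^{M_\delta}\) at \(\delta\) (hence at \(\sup j_\delta[\delta]\), a transfer the paper leaves implicit) to invoke \cref{KetonenInternal}, and then recover \(W\) inside \(M_\delta\) from the now-internal embedding and its seed. The obstacles you flag at the end are non-issues: internality means \(i\circ j_U^{M_\delta} = j_Z^{M_\delta}\) for some \(Z\in M_\delta\), so \(N\subseteq M_\delta\) and both \(i(\xi)\) and the seed of \(i\) are automatically available in \(M_\delta\), while independence of the representative \(f\) is automatic because the factorization formula merely expresses \(i\) applied to the point \(j_U^{M_\delta}(f)(\xi)\).
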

By the argument of \cref{TightCover}, since \(\textsc{crt}(j_\delta)\) is \(\delta\)-strongly compact, it follows from \cref{InternalClm}  that \(P(\delta)\subseteq M_{U_\lambda}^{M_\delta}\). Applying \cref{InternalClm} again, we have \(U_\delta = U_\delta\cap M_{U_\lambda}^{M_\delta}\in M_{U_\lambda}^{M_\delta}\subseteq M_\delta = M_{U_\delta}\), which contradicts the fact that a nonprincipal countably complete ultrafilter cannot belong to its own ultrapower.
\end{proof}

\section{The Irreducibility Theorem}
In this section, we prove the main result of this paper which we call the {\it Irreducibility Theorem}. To define the notion of irreduciblity, and to give the proof, we need some preliminary facts about factorizations of ultrapower embeddings.

\subsection{The Rudin-Frolik order}
The Rudin-Frolik order is an order on ultrafilters that measures how ultrapower embeddings factor into iterated ultrapowers.
\begin{defn}
The {\it Rudin-Frolik order} is defined on countably complete ultrafilters \(U\) and \(W\) by setting \(U\D W\) if there is an internal ultrapower embedding \(k : M_U\to M_W\) such that \(j_W = k\circ j_U\).
\end{defn}

The Ultrapower Axiom is equivalent to the statement that the Rudin-Frolik order is directed on countably complete ultrafilters.

\begin{defn}
A countably complete ultrafilter \(U\) is {\it irreducible} if for all \(D\D U\), either \(D\) is principal or \(D\) is isomorphic to \(U\).  

An ultrapower embedding \(j :V \to M\) is {\it irreducible} if \(j = j_U\) for some irreducible ultrafilter \(U\).
\end{defn}

One version of the Irreducibility Theorem we will prove is the following:
\begin{thm}[UA]
Suppose \(U\) is an irreducible Fr\'echet uniform ultrafilter on a successor cardinal or a strong limit singular cardinal \(\lambda\). Then \(M_U^\lambda\subseteq M_U\).
\end{thm}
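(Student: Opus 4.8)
The plan is to reduce \(M_U^\lambda\subseteq M_U\) to two covering facts and then obtain both from a single absorption principle, whose proof is where the irreducibility hypothesis does its work. Write \(\kappa=\textsc{crt}(j_U)\). By the characterization of \(\lambda\)-closure for ultrapowers stated above (the lemma used in the closure theorem for \(M_\delta\)), for an ultrapower \(M\) of \(V\) one has \(M^\lambda\subseteq M\) if and only if \(M\) has the tight covering property at \(\lambda\) and \(P(\lambda)\subseteq M\). Thus it suffices to prove (a) \(M_U\) has the tight covering property at \(\lambda\), and (b) \(P(\lambda)\subseteq M_U\). Since \(U\) is Fr\'echet uniform on \(\lambda\), a \(\kappa\)-complete uniform ultrafilter on a singular \(\lambda\) forces \(\textnormal{cf}(\lambda)\geq\kappa\); together with the hypothesis that \(\lambda\) is a successor or a strong limit singular cardinal, this guarantees \(\lambda^{<\kappa}=\lambda\) both in \(V\) and (once \(\kappa\) is seen to be suitably strongly compact) in \(M_U\). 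This arithmetic identity is exactly what the restriction on the shape of \(\lambda\) buys us, and it is what makes the independent-family machinery available.

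The engine for both (a) and (b) is the \emph{absorption principle}: for every \(\kappa\)-complete ultrafilter \(W\) on \(\lambda\) arising below, \(W\cap M_U\in M_U\). Granting this, the argument follows the pattern of \cref{TightCover}. Assuming \(\kappa\) is \(\lambda\)-strongly compact, \cref{Hausdorff} yields a \(\kappa\)-independent family \(F\subseteq P(\lambda)\) with \(|F|=\lambda\); for each \(S\subseteq F\) the collection \(S\cup\{\lambda\setminus A:A\in F\setminus S\}\) generates a \(\kappa\)-complete filter by \cref{IndependentFilter}, which extends to a \(\kappa\)-complete ultrafilter \(W_S\) on \(\lambda\); absorption gives \(W_S\cap M_U\in M_U\), whence \(S=(W_S\cap M_U)\cap F\in M_U\). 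Hence \(P(F)\subseteq M_U\), and since \(|F|=\lambda\) this is (b). For (a) I would argue by contradiction as in \cref{TightCover}: a failure of tight covering at \(\lambda\) would, via a \cref{CoverDichotomy}-style dichotomy for \(M_U\), force the absorption principle to hold for \emph{all} uniform \(\kappa\)-complete ultrafilters of space \(\leq\lambda\), including \(U\) itself; the computation above would then give \(U\cap M_U\in M_U\), contradicting that a nonprincipal countably complete ultrafilter cannot belong to its own ultrapower.

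Two further ingredients are needed. First, the \(\lambda\)-strong compactness of \(\kappa\): I would show that an irreducible Fr\'echet uniform \(U\) cannot skip a regular cardinal, i.e.\ every regular \(\delta\in[\kappa,\lambda]\) carries a \(\kappa\)-complete uniform ultrafilter. The idea is that a missing \(\delta\) would let one factor \(j_U\) through the Ketonen embedding \(j_\delta\) (using its universal property and the internal-embedding criterion \cref{KetonenInternal}) so as to exhibit a nonprincipal \(D\D U\) with \(D\not\cong U\), contradicting irreducibility; \cref{NextUniform} enters here to guarantee that the next Fr\'echet cardinals computed in the relevant Ketonen ultrapowers are measurable, ruling out an isolated gap. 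Second, and most importantly, the absorption principle is the crux. I would detect a failure of \(W\cap M_U\in M_U\) as the non-internality of \(j_W\restriction M_U\) (through \cref{KetonenInternal} and \cref{CombinatorialInternal}), analyze the canonical comparison of \(U\) with \(W\) via the translation functions \(t_U,t_W\) and \cref{Reciprocity1}, and read off from a nontrivial comparison a nonprincipal \(D\D U\) not isomorphic to \(U\) --- again contradicting irreducibility.

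The main obstacle is precisely this conversion of irreducibility (an absence of nontrivial Rudin--Frolik factors) into the absorption of genuine sets: everything downstream is a routine imitation of \cref{TightCover}, but extracting a concrete nonprincipal \(D\D U\) from the failure of absorption, and verifying it is not isomorphic to \(U\), is delicate and is where I expect the real work to lie. A secondary difficulty is the strong limit singular case, where \(\lambda\) is not regular: there one must work with \(\delta=\textnormal{cf}(\lambda)\) and the Ketonen embedding \(j_\delta\), transferring covering information from \(M_\delta\) via \cref{KetonenCov}, and exploit that \(\lambda\) is a strong limit (so that the independent families and the relevant filters one manufactures on cardinals \(\gamma<\lambda\), where absorption below \(\lambda\) is more readily available, still recover all of \(P(\lambda)\)).
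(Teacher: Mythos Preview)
Your plan diverges from the paper's and, as written, has a genuine gap at exactly the point you flag as delicate. You try to prove closure of \(M_U\) \emph{directly}, by establishing an absorption principle \(W\cap M_U\in M_U\) for \(\kappa\)-complete \(W\) on \(\lambda\) and then rerunning the independent-family argument of \cref{TightCover}. But the tools you invoke for absorption---\cref{KetonenInternal} and \cref{CombinatorialInternal}---are statements about \(M_\delta\), not about an arbitrary irreducible \(M_U\); they rest on the universal property of Ketonen embeddings, and there is no analogous internal-embedding criterion for \(M_U\). Likewise, there is no \cref{CoverDichotomy}-style dichotomy available for \(M_U\), so your contradiction argument for tight covering has nothing to stand on. Your sketch for extracting a nonprincipal \(D\D U\) from the canonical comparison of \(U\) with a general \(W\) does not work: a comparison yields internal ultrapower embeddings of \(M_U\) and \(M_W\) into a common \(N\), but nothing in that picture produces a Rudin--Frolik factor of \(U\) itself.

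The paper avoids this entirely by \emph{transferring} closure from \(M_\delta\), where it is already known (\cref{LeastSuper}), rather than proving it for \(M_U\) from scratch. One takes the canonical comparison \((i_*,j^M_{\delta'}):(M_\delta,M_U)\to N\) of \((j_\delta,j_U)\) and shows \(\textsc{crt}(i_*)>\delta\); then \(N\) inherits \(\delta\)-closure from \(M_\delta\), and since \(N\subseteq M_U\) one gets \(\text{Ord}^\delta\subseteq M_U\). The real content is showing \(\textsc{crt}(i_*)>\delta\): Silver's factorization (via \cref{DeltaFactor}, which uses \cref{NextUniform}) writes \(i_*=i'\circ j_D^{M_\delta}\) with \(D\in\Un_{<\delta}\) and \(\textsc{crt}(i')>\delta\); a substantial argument using Solovay sets and \cref{NormalGeneration} then proves \(d(j_\delta)\restriction N=j^M_{\delta'}\restriction N\), which together with the pushout property (\cref{CanonicalInternal}) forces \(D\D U\). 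Irreducibility now kills \(D\). So the Rudin--Frolik factor is not read off a comparison with an arbitrary \(W\), but manufactured from the very specific comparison with the Ketonen ultrapower, where Silver's theorem applies on the \(M_\delta\) side.
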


We continue with some more preliminary facts about the Rudin-Frolik order.

\begin{defn}
Supposer \(U\in \Un\) and \(D\) is a nonprincipal countably complete ultrafilter such that \(D\D U\). Then \(U/D\) is the uniform \(M_D\)-ultrafilter derived from \(k\) using \([\text{id}]_U\) where \(k : M_D\to M_U\) is the (unique) internal ultrapower embedding of \(M_D\) such that \(j_U = k\circ j_D\).
\end{defn}

Thus \(j^{M_D}_{U/D} : M_D\to M_U\) witnesses \(D\D U\). The following is a key lemma in the analysis of the Rudin-Frolik order under UA, proved in \cite{RF} Lemma 6.6:

\begin{lma}\label{Pushdown}
Supposer \(U\in \Un\) and \(D\) is a nonprincipal countably complete ultrafilter such that \(D\D U\). Then in \(M_D\), \(j_D(U)\swo U/D\).\qed
\end{lma}

The following fact is one of the main theorems of \cite{RF}, Theorem 8.3:

\begin{thm}[UA]
A countably complete ultrafilter has at most finitely many predecessors in the Rudin-Frolik order up to isomorphism. 
\end{thm}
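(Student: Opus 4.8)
The plan is to fix $U\in\Un$ and bound the number of isomorphism classes of nonprincipal countably complete ultrafilters $D$ with $D\D U$. I would first set up the "prime factorization'' viewpoint. Using the directedness of the Rudin-Frolik order (which is equivalent to UA), I want to upgrade directedness to a lattice structure on the poset $P_U$ of isomorphism classes of predecessors of $U$: given $D_0,D_1\D U$ with canonical internal quotient embeddings $k_{D_0}:M_{D_0}\to M_U$ and $k_{D_1}:M_{D_1}\to M_U$ satisfying $j_U=k_{D_i}\circ j_{D_i}$, I would take the pushout of $(k_{D_0},k_{D_1})$ and use \cref{CanonicalInternal} (with \cref{kInternal}) to argue that the resulting common factor again lies below $U$, so that $D_0\vee D_1\D U$. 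Granting this, every predecessor is a join of join-irreducible predecessors, and an infinite antichain of join-irreducibles yields, via successive partial joins, an infinite strictly increasing $\D$-chain below $U$. Thus the whole statement reduces to: \emph{there is no infinite strictly increasing sequence $D_0,D_1,D_2,\dots\D U$ of proper nonprincipal predecessors.}

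For the reduced statement I would introduce the quotient data. For each $n$ let $k_n:M_{D_n}\to M_U$ be the internal embedding with $j_U=k_n\circ j_{D_n}$, and let $c_n:M_{D_n}\to M_{D_{n+1}}$ witness $D_n\D D_{n+1}$, so that $k_{n+1}\circ c_n=k_n$ and $c_n=j^{M_{D_n}}_{E_n}$ for the nonprincipal quotient $E_n=D_{n+1}/D_n\in M_{D_n}$. Setting $Z_n=U/D_n\in M_{D_n}$, so $k_n=j^{M_{D_n}}_{Z_n}$, the identity $k_{n+1}\circ c_n=k_n$ expresses $Z_n$ as the two-step iterated ultrapower of $M_{D_n}$ by $E_n$ followed by $Z_{n+1}$; concretely $E_n\D Z_n$ inside $M_{D_n}$ with $Z_n/E_n$ identified with $Z_{n+1}$ along $c_n$. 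The engine is then \cref{Pushdown}, applied \emph{inside} $M_{D_n}$ to $E_n\D Z_n$, which gives $j^{M_{D_n}}_{E_n}(Z_n)\swo Z_n/E_n$, i.e.\ $c_n(Z_n)\swo Z_{n+1}$ in $M_{D_{n+1}}$, and this is a \emph{strict} seed-order relation precisely because $E_n$ is nonprincipal, so $c_n$ is nontrivial.

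The contradiction should come from the single-generation of $M_U$. Since each step is proper, $[\text{id}]_U\notin k_n[M_{D_n}]=H^{M_U}(j_U[V]\cup\{k_n([\text{id}]_{D_n})\})$ for every $n$, so the generators $t_n:=k_n([\text{id}]_{D_n})$ produce a strictly increasing chain of proper hulls of $M_U$, all contained in $M_U=H^{M_U}(j_U[V]\cup\{[\text{id}]_U\})$. I would combine this with the seed-order relations above, transported to the fixed model $M_U$: the embeddings $k_n$ are elementary and the seed order is definable, so one can compare the images $k_n(Z_n)$ uniformly in $M_U$. The intended payoff is that the factorization $U=E_0\cdot E_1\cdots$ mirrors a unique-factorization phenomenon in which the single seed $[\text{id}]_U$ caps the number of irreducible factors, so an infinite chain is impossible.

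The main obstacle is exactly this last extraction. A naive push-forward of $c_n(Z_n)\swo Z_{n+1}$ along $k_{n+1}$ gives an \emph{increasing} sequence $k_n(Z_n)\swo k_{n+1}(Z_{n+1})$, which is bounded by $[\text{id}]_U$ but, living in a wellorder, is not by itself contradictory. The genuine work is therefore to identify the correct strictly \emph{descending} invariant — plausibly a seed-order rank, in the fixed model $M_U$, of the remaining quotient $U/D_n$ that drops at each proper step — and to show, using \cref{OrderPreserving}, \cref{Reciprocity1}, and the fact that $M_U$ is generated over $j_U[V]$ by a single seed, that each nonprincipal factor $E_n$ forces a strict decrease. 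Making this precise, together with verifying that the join of two predecessors of $U$ is again a predecessor of $U$ (for the reduction of antichains to chains), are the two technical points on which the argument rests.
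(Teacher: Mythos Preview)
The paper does not prove this theorem here; it is cited as Theorem 8.3 of \cite{RF}. So there is no in-paper proof to compare against, and I can only evaluate your proposal on its own terms.

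There is a genuine gap in your reduction step. Even granting that the predecessors of $U$ form a lattice under $\D$, an infinite antichain of join-irreducibles need not produce a strictly increasing chain of partial joins. Consider the lattice with a bottom, a top, and countably many pairwise incomparable atoms: the atoms are join-irreducible and form an infinite antichain, yet any two of them join to the top, so the sequence of partial joins stabilizes after one step and no infinite chain appears. To make your argument work you would need join-irreducible predecessors to be join-\emph{prime}, which amounts to distributivity of the lattice of predecessors --- a substantive structural fact about the Rudin--Frolik order that you neither state nor prove. You also implicitly assume that every predecessor is a \emph{finite} join of join-irreducibles, which presupposes a descending chain condition you have not established.

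The second gap you already identify: the Pushdown inequality $c_n(Z_n)\swo Z_{n+1}$ in $M_{D_{n+1}}$, when transported to $M_U$, yields only an increasing bounded sequence in a wellorder, which is no contradiction. You are right that the crux is to isolate a strictly descending invariant, but as written the proposal contains no candidate for it and no argument that any such quantity decreases. This is the heart of the matter, not a detail to be filled in later, so the proposal is really a description of the setup together with an honest acknowledgement of what remains to be done.
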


In other words, an ultrapower embedding of \(V\) has at most finitely many predecessors in the Rudin-Frolik order.

The following is an easy corollary:
\begin{thm}[UA]
For any ultrapower embedding \(j : V\to M\), there is a finite iterated ultrapower \[V = M_0\stackrel{j_0}{\longrightarrow} M_1 \stackrel{j_1}{\longrightarrow}\cdots \stackrel{j_{n-1}}{\longrightarrow} M_{n} = M\] such that for each \(i < n\), \(j_i : M_i\to M_{i+1}\) is an irreducible ultrapower embedding of \(M_i\) and \(j = j_{n-1}\circ \cdots j_1\circ j_0\).\end{thm}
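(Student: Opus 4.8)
The plan is to induct on the number of nonprincipal Rudin--Frolik predecessors, peeling off one irreducible factor at a time. Write \(j = j_U\) for some \(U\in\Un\), which is possible since \(j\) is an ultrapower embedding of \(V\) into a wellfounded model. By the preceding theorem, the collection of nonprincipal countably complete ultrafilters \(D\D U\), taken up to isomorphism, is finite; let \(\rho(U)\) denote its size. I will argue by strong induction on \(\rho(U)\) that \(j_U\) factors as a finite iterated ultrapower by irreducible embeddings.

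If \(\rho(U)=0\) then \(U\) has no nonprincipal predecessor; since \(U\D U\), this forces \(U\) to be principal, so \(j_U=\text{id}\) and the empty iterated ultrapower (\(n=0\)) works. Now suppose \(\rho(U)\geq 1\), so that \(U\) is nonprincipal and is itself one of its finitely many nonprincipal predecessors. Choose \(D\) to be \(\D\)-minimal among the nonprincipal predecessors of \(U\), up to isomorphism. Then \(D\) is irreducible: any nonprincipal \(D''\D D\) satisfies \(D''\D U\) by transitivity, so minimality of \(D\), together with the fact that \(\D\) is a partial order on isomorphism classes (from \cite{RF}), forces \(D''\cong D\). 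Set \(j_0=j_D:V\to M_D=M_1\), and let \(k=j^{M_D}_{U/D}:M_D\to M_U\) be the internal ultrapower embedding with \(j_U=k\circ j_D\), so that \(W=U/D\) is an ultrafilter in \(\Un^{M_D}\).

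It then remains to factor \(k=j^{M_D}_{W}\) and prepend \(j_D\), and for this I apply the induction hypothesis inside \(M_D\) to \(W\). This requires \(\rho^{M_D}(W)<\rho(U)\), which I obtain from a correspondence supplied by the structure theory of the Rudin--Frolik order in \cite{RF}: given a nonprincipal \(E\D^{M_D} W\) witnessed by an internal \(l:M^{M_D}_E\to M^{M_D}_W=M_U\), the composite \(j^{M_D}_E\circ j_D:V\to M^{M_D}_E\) is an ultrapower embedding of \(V\), say equal to \(j_{E'}\), and \(l:M_{E'}\to M_U\) witnesses \(E'\D U\). The assignment \(E\mapsto E'\) is injective on isomorphism classes, since \(E\) is recovered as the quotient \(E'/D\), and its image consists precisely of the nonprincipal predecessors \(E'\) of \(U\) with \(D\D E'\) and \(E'/D\) nonprincipal. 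Since \(D\) is itself a nonprincipal predecessor of \(U\) whose quotient \(D/D\) is principal, \(D\) lies outside this image, so \(\rho^{M_D}(W)\leq \rho(U)-1\). Applying the induction hypothesis in \(M_D\) produces a finite iterated ultrapower \(M_D=N_0\to\cdots\to N_m=M^{M_D}_{W}=M_U\) by irreducible embeddings, and prepending \(j_D\) yields the desired factorization of \(j_U\).

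The main obstacle is exactly the strict decrease \(\rho^{M_D}(W)<\rho(U)\): establishing that the nonprincipal predecessors of the quotient \(U/D\) in \(M_D\) inject into the nonprincipal predecessors of \(U\) in \(V\) while omitting \(D\). Checking that \(E\mapsto E'\) is well defined (that \(l\) is genuinely an internal ultrapower embedding of \(M_{E'}\)), injective, and has the claimed range is the content of the quotient analysis of the Rudin--Frolik order, and is where the argument leans on \cite{RF} rather than on the large-cardinal machinery developed earlier in this paper. Everything else---taking a \(\D\)-minimal predecessor to obtain an irreducible factor, recognizing that compositions of ultrapower embeddings are ultrapower embeddings, and reassembling the iterated ultrapower---is routine.
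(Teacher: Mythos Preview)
The paper does not actually prove this theorem: it is stated without proof as ``an easy corollary'' of the finiteness of Rudin--Frolik predecessors. Your argument is a correct way to fill in the details. Peeling off a \(\D\)-minimal nonprincipal predecessor \(D\) (irreducible by minimality together with antisymmetry of \(\D\) on isomorphism classes), and then inducting on the number of predecessors, works; the correspondence \(E\mapsto E'\) you describe is exactly the right tool for controlling \(\rho^{M_D}(U/D)\).

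Two small remarks. First, your inductive step applies the induction hypothesis \emph{inside} \(M_D\), so you are tacitly using that \(M_D\) satisfies UA (so that the finiteness theorem and the lower cases of the induction are available there); this is true and standard, but it should be said. Second, there is a slightly cleaner variant that avoids changing models: take a maximal chain \(P_0\D P_1\D\cdots\D P_m\cong U\) (with \(P_0\) principal) in the finite poset of \(\D\)-predecessors of \(U\), and use your correspondence \(E\mapsto E'\) to see that any nonprincipal \(E\D^{M_{P_i}} P_{i+1}/P_i\) yields \(P_i\D E'\D P_{i+1}\), forcing \(E'\cong P_{i+1}\) by maximality of the chain and hence \(E\cong P_{i+1}/P_i\). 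This gives the irreducibility of each quotient directly, without an induction or a descent into \(M_D\). Both approaches rely on the same bookkeeping lemma about quotients, so the difference is purely organizational.
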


The analysis of countably complete ultrafilters under UA therefore reduces to the analysis of irreducible ultrafilters (and how they can be iterated). But even given the Irreducibility Theorem, irreducible ultrafilters remain somewhat mysterious. 
\begin{comment}
Consider the following key question, which is completely open:
\begin{iuh}
Suppose \(U\) and \(W\) are countably complete Fr\'echet uniform ultrafilters on cardinals. Either \(U\equiv W\), \(U\mo W\), or \(W\mo U\).
\end{iuh}
\begin{qst}Does UA imply the Irreducible Ultrafilter Hypothesis?\end{qst}

The results of this section may be seen as a partial positive answer to this question. For example, our results imply that if \(U\) and \(W\) are Fr\'echet uniform ultrafilters such that \(\textsc{sp}(U) < \textsc{sp}(W)\), then \(U\mo W\).
\end{comment}

\subsection{Indecomposability and factorization}
The key to the proof of the irreducibility theorem is a generalization of a theorem of Silver that under favorable circumstances allows an ultrapower embedding to be ``factored across a continuity point." 
\begin{defn}
Suppose \(\lambda\) is a cardinal. An ultrafilter \(U\) on a set \(X\) is {\it \(\lambda\)-indecomposable} if whenever \(P\) is a partition of \(X\) into \(\lambda\) parts, there is some \(Q\subseteq P\) such that \(|Q| < \lambda\) and \(\bigcup Q\in U\).
\end{defn}
An ultrafilter is \(\lambda\)-decomposable if it is not \(\lambda\)-indecomposable.
\begin{prp}
Suppose \(U\) is an ultrafilter and \(\lambda\) is a cardinal. Then the following are equivalent:
\begin{enumerate}[(1)]
\item \(U\) is \(\lambda\)-decomposable.
\item There is a Fr\'echet uniform ultrafilter \(D\) on \(\lambda\) with \(D\RK U\).
\end{enumerate}
Assuming \(U\) is countably complete, one can add to the list:
\begin{enumerate}[(3)]
\item There are elementary embeddings \(V\stackrel j\longrightarrow M\stackrel k\longrightarrow M_U\) such that \(j_U = k\circ j\) and \(\sup j[\lambda] \leq \textsc{crt}(k) < j(\lambda)\).\qed
\end{enumerate}
\end{prp}

The proof of the following theorem appears in \cite{Frechet} Theorem 4.8. A lucid sketch of the special case that was relevant to Silver appears in \cite{Silver}.

\begin{thm}[Silver]\label{Silver}
Suppose \(U\) is an ultrafilter and \(\delta\) is a regular cardinal. Assume \(U\) is \(\lambda\)-decomposable for all cardinals \(\lambda\) with \(\delta \leq \lambda \leq 2^\delta\). Then there is an ultrafilter \(D\) on some cardinal \(\gamma <\delta\) and an elementary embedding \(k : M_{D} \to M_U\) such that \(j_U((2^\delta)^+)\subseteq k[M_D]\).\qed
\end{thm}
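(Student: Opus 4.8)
The plan is to recast the decomposability hypothesis geometrically. For an infinite cardinal \(\lambda\), \(U\) is \(\lambda\)-decomposable if and only if \(\sup j_U[\lambda] < j_U(\lambda)\): given a decomposition \(f : X \to \lambda\) with \(f_*(U)\) uniform, the ordinal \([f]_U\) lies in \([\sup j_U[\lambda], j_U(\lambda))\), and the ultrafilter on \(\lambda\) derived from \(j_U\) at \([f]_U\) equals \(f_*(U)\), which is uniform and Rudin--Keisler below \(U\); conversely any ordinal in this (nonempty) interval yields such a uniform ultrafilter. So the hypothesis says precisely that \(j_U\) is discontinuous at every cardinal in \([\delta, 2^\delta]\), and for each such \(\lambda\) we may fix a seed \(a_\lambda \in [\sup j_U[\lambda], j_U(\lambda))\) inducing a uniform ultrafilter on \(\lambda\) that is Rudin--Keisler below \(U\).

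It pays to notice first why the statement is delicate: the factor \(k\) \emph{cannot} commute with \(j_U\). If we had \(k \circ j_D = j_U\) with \(D\) on \(\gamma < \delta\), then for each \(\lambda \in (\delta, 2^\delta]\) the seed \(a_\lambda < j_U(\lambda) \leq j_U(2^\delta) < j_U((2^\delta)^+)\) would lie in \(k[M_D]\); writing \(a_\lambda = k(\bar a)\), the equivalence \(a_\lambda \in j_U(A) \iff \bar a \in j_D(A)\) shows the uniform ultrafilter on \(\lambda\) derived from \(j_U\) at \(a_\lambda\) is Rudin--Keisler below \(D\), forcing a uniform ultrafilter on \(\lambda\) to concentrate on a set of size \(\gamma < \lambda\) --- absurd. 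Hence the theorem must produce a genuinely non-commuting embedding: equivalently, an elementary \(\sigma : V \to M_U\) distinct from \(j_U\) together with a seed \(e\) whose derived ultrafilter \(D = \{A \subseteq \gamma : e \in \sigma(A)\}\) lives on some \(\gamma < \delta\) and for which \(\{\sigma(g)(e) : g \in V\} \supseteq j_U((2^\delta)^+)\). The required \(k : M_D \to M_U\) is then the factor map \([g]_D \mapsto \sigma(g)(e)\), automatically elementary with the prescribed image. In short, one must exhibit \(M_U\) as having, below \(j_U((2^\delta)^+)\), a second ultrapower representation coming from a \emph{small} ultrafilter \(D\) into which all the discontinuities of \(j_U\) on \([\delta, 2^\delta]\) have been absorbed.

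To build \(\sigma\) and \(e\), I would proceed by recursion along the at most \(2^\delta\)-many cardinals \(\lambda \in [\delta, 2^\delta]\), at each step using the seed \(a_\lambda\) to peel off the discontinuity of \(j_U\) at \(\lambda\) --- for countably complete \(U\) this is the factorization \(V \to M \to M_U\) of clause (3) of the preceding proposition, with critical point in \([\sup j_U[\lambda], j_U(\lambda))\), and in general it is handled at the level of the decomposition functions --- and amalgamating the results into a single embedding \(\sigma\) continuous throughout \([\delta, 2^\delta]\). The bound \((2^\delta)^+\) is exactly calibrated: capturing \(j_U\) up to \(j_U((2^\delta)^+)\) is tantamount to capturing \(j_U \restriction P(\delta)\), and a Hausdorff-style independent family of \(2^\delta\) subsets of \(\delta\) (as in \cref{Hausdorff}) lets the action of the single seed \(e\) simultaneously encode every subset of \(\delta\); this is the mechanism by which an ultrafilter on \(\gamma < \delta\) comes to carry the information of the whole interval, with each high seed \(a_\lambda\) realized as some value \(\sigma(g)(e)\).

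The main obstacle is this amalgamation together with the \emph{completeness} of the capture. Splitting off a single discontinuity is routine (Silver's lemma), but combining all \(2^\delta\) of them while keeping the resulting factor ultrafilter \(D\) strictly below \(\delta\), and then proving that \(\{\sigma(g)(e) : g \in V\}\) contains \emph{every} ordinal below \(j_U((2^\delta)^+)\) rather than merely a cofinal set of them, is precisely the intricate heart of the argument. Controlling it means producing, for each ordinal \(\xi < j_U((2^\delta)^+)\), an explicit representation \(\xi = \sigma(g)(e)\), using the regularity of \(\delta\) for the bookkeeping and the \(P(\delta)\)-coding to dissolve the generators \(a_\lambda\) into \(\sigma[V]\). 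Since this is carried out in full in \cite{Frechet} (Theorem 4.8), with Silver's original special case sketched in \cite{Silver}, I would cite those for the amalgamation itself and devote the exposition to the reformulation above and to the non-commuting phenomenon that dictates the shape of the conclusion.
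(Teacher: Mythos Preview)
Your proposal is derailed by a typo in the statement: the hypothesis should read that \(U\) is \(\lambda\)-\emph{in}decomposable for all \(\lambda\in[\delta,2^\delta]\), not \(\lambda\)-decomposable. This is clear from the way the theorem is invoked in \cref{DeltaFactor} (where the hypothesis verified is explicitly indecomposability), from the remark immediately following the theorem (which speaks of \(\textsc{crt}(k)>j_U((2^\delta)^+)\) and concludes \(j_U((2^\delta)^+)=(2^\delta)^+\), both of which presuppose \(k\circ j_D=j_U\)), and from Silver's original result. Your entire second and third paragraphs --- the argument that \(k\) \emph{cannot} satisfy \(k\circ j_D=j_U\), and the plan to manufacture a non-commuting \(\sigma:V\to M_U\) by amalgamating discontinuities --- are artifacts of reading the typo literally. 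Under the intended hypothesis there are no uniform ultrafilters on any \(\lambda\in[\delta,2^\delta]\) Rudin--Keisler below \(U\), so your contradiction evaporates, and the classical proof does produce a commuting factorization \(j_U=k\circ j_D\) with \(\textsc{crt}(k)\) large.

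At the level of what is actually proved you and the paper coincide: the paper gives no argument here and simply cites \cite{Frechet} Theorem~4.8 and \cite{Silver}, and you land on the same citations. But the surrounding exposition you propose is built on the misstated hypothesis and should be dropped; if you want to add motivation, it should instead explain why indecomposability on \([\delta,2^\delta]\) forces \(j_U\restriction P(\delta)\) to look like \(j_D\restriction P(\delta)\) for some \(D\) on \(\gamma<\delta\).
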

In the countably complete case, which is the only case in which we will be interested, \(j_U((2^\delta)^+)\subseteq k[M_D]\) is equivalent to \(\textsc{crt}(k) > j_U((2^\delta)^+)\). Thus we have \( j_U((2^\delta)^+) = j_D((2^\delta)^+) = (2^\delta)^+\).

Under UA, this has the following consequence:
\begin{thm}[UA]\label{SigmaSilver}
Suppose \(U\) is a countably complete ultrafilter and \(\delta\) is a regular cardinal. Assume \(U\) is \(\lambda\)-decomposable for all cardinals \(\lambda\) with \(\delta \leq \lambda \leq 2^\delta\). Then there is a countably complete ultrafilter \(D\) on some cardinal \(\gamma <\delta\) and an internal ultrapower embedding \(k : M_{D} \to M_U\) such that \(\textsc{crt}(k) > (2^\delta)^+\).\end{thm}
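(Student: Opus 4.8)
The plan is to take the factorization handed to us by Silver's theorem and, under UA, promote its elementary embedding to an internal ultrapower embedding, while checking along the way that the predecessor ultrafilter may be taken countably complete. First I would apply \cref{Silver} to obtain an ultrafilter $D$ on some cardinal $\gamma < \delta$ together with the factor embedding $k : M_D \to M_U$ satisfying $k \circ j_D = j_U$; by the remark following \cref{Silver}, in the countably complete case $\textsc{crt}(k) > (2^\delta)^+$. Since $k$ embeds $M_D$ elementarily into the wellfounded model $M_U$, the model $M_D$ is itself wellfounded, and hence $D$ is countably complete; restricting $D$ to its space (still below $\delta$) we may assume $D\in\Un$. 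The trivial lemma \cite{RF} Lemma 5.14 then shows $k$ is an ultrapower embedding of $M_D$, so the only remaining task is internality.

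For internality I would invoke \cref{kInternal}, whose hypotheses ($D,U\in\Un$ and $k\circ j_D = j_U$) are now in place; it suffices to verify condition (3), namely $\tr U D \in k[M_D]$. Here is where the large critical point pays off: since $\textsc{crt}(k) > (2^\delta)^+$, the embedding $k$ fixes $V_{(2^\delta)^+}^{M_D}$ pointwise and $V_{(2^\delta)^+}^{M_D} = V_{(2^\delta)^+}^{M_U}$, so every element of $M_U$ of rank below $(2^\delta)^+$ lies in $k[M_D]$. Thus it is enough to show that $\tr U D$ is an $M_U$-ultrafilter on an ordinal below $(2^\delta)^+$.

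To bound the space of $\tr U D$, I would exhibit a concrete witness. Since $[\text{id}]_D < j_D(\gamma)$ and $|j_D(\gamma)| \leq 2^\gamma \leq 2^\delta$, we have $[\text{id}]_D < (2^\delta)^+ < \textsc{crt}(k)$, so $k$ fixes $[\text{id}]_D$ and fixes $j_D(A)$ for every $A\subseteq\gamma$ in $M_D$. A direct computation from the definition of the $U$-limit then gives $U^-(P^{M_U}_{[\text{id}]_D}) = D$: indeed $U^-(P^{M_U}_{[\text{id}]_D}) = \{A : [\text{id}]_D \in j_U(A)\}$, and for $A\subseteq\gamma$ we have $[\text{id}]_D \in j_U(A) = k(j_D(A)) = j_D(A)$ if and only if $A\in D$. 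Since $P^{M_U}_{[\text{id}]_D}\in\Un^{M_U}$, minimality of the translation gives $\tr U D \E^{M_U} P^{M_U}_{[\text{id}]_D}$, and because the Ketonen order is non-increasing in space we conclude $\textsc{sp}^{M_U}(\tr U D) \leq [\text{id}]_D + 1 < (2^\delta)^+$. Therefore $\tr U D \in V_{(2^\delta)^+}^{M_U} \subseteq k[M_D]$, and \cref{kInternal} yields that $k$ is an internal ultrapower embedding, completing the proof.

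I expect the main obstacle to be precisely this internality upgrade: countable completeness is a soft wellfoundedness observation and the estimate $[\text{id}]_D < (2^\delta)^+$ is routine cardinal arithmetic, but identifying $\tr U D$ as a Ketonen predecessor of the principal ultrafilter $P^{M_U}_{[\text{id}]_D}$, and thereby bounding its space, is the crux where UA does the real work through the translation function and \cref{kInternal}. The one ancillary point I would take care to verify is that the Ketonen order respects space, i.e.\ $Z \E^{M_U} Z'$ implies $\textsc{sp}^{M_U}(Z)\leq\textsc{sp}^{M_U}(Z')$, since this is what converts the bound on $\tr U D$ into a genuine rank bound placing it inside $V_{(2^\delta)^+}^{M_U}$.
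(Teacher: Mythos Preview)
Your argument is correct and follows the paper's overall architecture (Silver's theorem, then \cref{kInternal} via $\tr U D\in k[M_D]$), but the way you establish $\tr U D\in k[M_D]$ is genuinely different and in one respect sharper. The paper invokes \cref{BoundingLemma} to place $\tr U D$ in $\Un^{M_U}_{\leq j_U(\delta)}$ and then appeals to the ultrafilter counting theorem \cref{UFCounting} to bound $|\Un^{M_U}_{\leq j_U(\delta)}|^{M_U}$ by $j_U((2^\delta)^+)$; since this set lies in $k[M_D]$ and $j_U((2^\delta)^+)\subseteq k[M_D]$, every element of it lies in $k[M_D]$. You instead compute directly that $U^-(P^{M_U}_{[\text{id}]_D})=D$, obtaining the much tighter bound $\textsc{sp}^{M_U}(\tr U D)\leq [\text{id}]_D+1<(2^\delta)^+$, so that $\tr U D\in V^{M_U}_{(2^\delta)^+}=V^{M_D}_{(2^\delta)^+}$ is literally fixed by $k$. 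Your route avoids the nontrivial external input \cref{UFCounting} entirely, at the cost of the small computation with $P^{M_U}_{[\text{id}]_D}$ and the (easy) fact that the Ketonen order is nonincreasing in space; the paper's route is more structural but leans on heavier machinery.
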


For the proof we need the following theorem, which appears as \cite{Frechet} Theorem 12.1:

\begin{thm}[UA]\label{UFCounting}
For any cardinal \(\lambda\), \(|\Un_\lambda|\leq (2^\lambda)^+\).\qed
\end{thm}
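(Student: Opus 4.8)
The plan is to exploit that, under UA, the seed order \(\swo\) (equivalently the Ketonen order \(\sE\)) wellorders \(\Un\), so that \((\Un_\lambda,\sE)\) is a wellorder and it suffices to bound its length. The engine of the bound is a cardinal-arithmetic observation: if \(U\) is a uniform ultrafilter with \(\textsc{sp}(U)=\lambda\), then every ordinal below \(j_U(\lambda)\) is represented by a function \(\lambda\to\lambda\), so \(j_U(\lambda)<(2^\lambda)^+\), and in particular the seed \([\text{id}]_U\) is a genuine ordinal below \((2^\lambda)^+\). More generally \(|j_U(\theta)|\le\theta^\lambda\) for every cardinal \(\theta\), which, together with Hausdorff's formula \(((2^\lambda)^+)^\lambda=(2^\lambda)^+\), controls how the \(M_U\)-cardinals near \(j_U(\lambda)\) are seen back in \(V\).

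First I would reflect the \(\sE\)-rank of each \(U\in\Un_\lambda\) into its own ultrapower. By \cref{OrderPreserving} the translation \(t_U:(\Un,\sE)\to(\Un,\sE)^{M_U}\) is order preserving, hence injective, and by the computation in the proof of \cref{kInternal} (taking \(k=\text{id}\)) one has \(\tr U U=P^{M_U}_{[\text{id}]_U}\). Since the \(\sE^{M_U}\)-predecessors of a principal ultrafilter \(P_s\) are exactly the uniform ultrafilters of \(M_U\) of space at most \(s\), the map \(t_U\) sends the initial segment \(\{W:W\sE U\}\) order-preservingly into \((\Un^{M_U}_{\le[\text{id}]_U},\sE)^{M_U}\). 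Thus the \(\sE\)-rank of \(U\) is bounded by the order type, computed in \(M_U\), of the uniform ultrafilters of space \(\le[\text{id}]_U<j_U(\lambda)\).

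Next I would run the induction. Applying the theorem inside \(M_U\) — legitimate because \(M_U\) again satisfies UA — bounds, for each \(\beta\le[\text{id}]_U\), the \(M_U\)-number of uniform ultrafilters of space \(\beta\) by \(((2^{|\beta|})^+)^{M_U}\); summing over the fewer than \(j_U(\lambda)\) relevant \(\beta\) and transferring back to \(V\) by counting the representing functions \(\lambda\to\textnormal{Ord}\) (this is where \(|j_U(\theta)|\le\theta^\lambda\) and Hausdorff's formula enter) shows that each proper initial segment of \((\Un_\lambda,\sE)\) has size at most \((2^\lambda)^+\). To turn this into a bound on \(\Un_\lambda\) itself I would use that \(\Un_\lambda\) is an initial segment of \((\Un_{\le\lambda},\sE)\): comparing any \(U\in\Un_\lambda\) with the principal ultrafilter \(P_\lambda\) via the comparison \((\text{id},j_U):(M_U,V)\to M_U\) gives \(U\swo P_\lambda\) precisely because \([\text{id}]_U<j_U(\lambda)\), so \(P_\lambda\) is a strict \(\swo\)-upper bound for \(\Un_\lambda\).

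The delicate point — and the step I expect to be the main obstacle — is extracting the \emph{sharp} constant \((2^\lambda)^+\) rather than merely \((2^\lambda)^{++}\). The reflection argument as stated bounds each proper initial segment only in cardinality, which by itself yields length \({<}(2^\lambda)^{++}\); pushing this down to \((2^\lambda)^+\) requires a more careful, continuity-style analysis of how the \emph{order type} (not just the cardinality) of the \(M_U\)-ultrafilters of space \(\le[\text{id}]_U\) sits below \(j_U(\lambda)\), together with a correspondingly careful bookkeeping of the Hausdorff computation in the successor-cardinal case. One must also be attentive to the shape of the induction: reflection into \(M_U\) replaces \(\lambda\) by the comparable ordinal \([\text{id}]_U\approx j_U(\lambda)\) rather than by something genuinely smaller, so the recursion is best organized either as a transfinite induction along the wellfounded seed order or as a single statement about \(\Un^{M}_{\le\gamma}\) proved simultaneously for all inner models \(M\) and ordinals \(\gamma\).
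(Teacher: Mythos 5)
You should know at the outset that the paper does not prove this theorem at all: it is imported wholesale from \cite{Frechet}, where it appears as Theorem 12.1, proved only after that paper's structural analysis of Fr\'echet and isolated cardinals and the supercompactness properties of the ultrafilters \(U_\lambda\). So your attempt must be measured against that external proof, and against it your proposal falls short in exactly the place you yourself flag. Your framework is the natural first attack, and its individual steps are sound: under UA \(\sE\) wellorders \(\Un_\lambda\); by \cref{OrderPreserving} the translation \(t_U\) embeds \(\{W : W\sE U\}\) order-preservingly into \(\Un^{M_U}_{\leq[\text{id}]_U}\); \([\text{id}]_U < j_U(\lambda) < (2^\lambda)^+\); and \(((2^\lambda)^+)^\lambda = (2^\lambda)^+\) by Hausdorff. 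But this machinery bounds each proper initial segment of \((\Un_\lambda,\sE)\) only by \((2^\lambda)^+\), not by \(2^\lambda\): when you pull \(M_U\)-counts back to \(V\) along \(|j_U(\theta)|\leq\theta^\lambda\), the case \(2^{<\lambda} = 2^\lambda\) leaves no slack anywhere in the computation, and an initial-segment bound of \((2^\lambda)^+\) yields only \(|\Un_\lambda|\leq(2^\lambda)^{++}\). The descent from \((2^\lambda)^{++}\) to \((2^\lambda)^+\) is not a bookkeeping refinement to be deferred with a gesture at a ``continuity-style analysis of order types''; it is the entire content of the theorem, and nothing in your argument supplies it, since the only structural fact being exploited is \([\text{id}]_U < j_U(\lambda)\).

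There is a second, independent defect: your recursion never grounds out, and neither of your two proposed reorganizations closes it. To bound the predecessors of \(U\) you must invoke the theorem \emph{inside} \(M_U\) at the \(M_U\)-cardinal \(|[\text{id}]_U|^{M_U}\), which lies in the interval \([\lambda, j_U(\lambda))\); induction on \(\lambda\) in \(V\) never reaches these instances. Induction along the wellfounded seed order does not reach them either, because the relevant \(Z\in\Un^{M_U}_{\leq[\text{id}]_U}\) are \(M_U\)-ultrafilters rather than \(\sE\)-predecessors of \(U\) in \(V\), and you exhibit no rank-decreasing assignment from the \(M_U\)-instances back to earlier \(V\)-instances. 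Your remaining suggestion --- a single statement about \(\Un^{M}_{\leq\gamma}\) quantified over all inner models \(M\) --- is, by elementarity, just the theorem itself, so it cannot serve as an induction hypothesis. This is precisely why the known proof is not soft: Theorem 12.1 of \cite{Frechet} sits at the end of that paper, downstream of its supercompactness machinery, and the present paper accordingly cites it rather than rederiving it from the general theory of the seed order, translation functions, and cardinal arithmetic --- the ingredients to which your proposal confines itself.
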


\begin{proof}[Proof of \cref{SigmaSilver}]
By \cref{Silver}, fix an ultrafilter \(D\) on some cardinal \(\gamma <\delta\) and an elementary embedding \(k : M_{D} \to M_U\) such that \(j_U((2^\delta)^+)\subseteq k[M_D]\). Obviously \(D\) is countably complete since \(M_D\) embeds in \(M_U\).  

By \cref{kInternal}, to show \(k\) is an internal ultrapower embedding, it is enough to show that \(\tr U D\in k[M_D]\).  By \cref{UFCounting}, \(|\Un^{M_U}_{\leq j_U(\delta)}|^{M_U}\leq j_U((2^\delta)^+)\). Since \(j_U((2^\delta)^+)\subseteq k[M_D]\) and \(\Un^{M_U}_{\leq j_U(\delta)}\in k[M_D]\), it follows that \(\Un^{M_U}_{\leq j_U(\delta)}\subseteq k[M_D]\). But \(\tr U D\in \Un^{M_U}_{\leq j_U(\delta)}\) by \cref{BoundingLemma}. So \(\tr U D\in k[M_D]\) as desired.
\end{proof}

\begin{cor}[UA]\label{DeltaFactor}
Suppose \(\delta\) is a Fr\'echet uniform cardinal that is either a successor cardinal or a strongly inaccessible cardinal. Suppose \(i : M_\delta\to N\) is an internal ultrapower embedding. Then there is some \(D\in \Un_{<\delta}\) and an internal ultrapower embedding \(i': M_D^{M_\delta}\to N\) such that \(i = i'\circ j^{M_\delta}_D\).
\begin{proof}
Let \(U\in M_\delta\) be a countably complete ultrafilter such that \(j_U^{M_\delta} = i\). Then \(U\) is \(\lambda\)-indecomposable for every cardinal \(\lambda\) with \(\delta\leq \lambda \leq 2^\delta\), and indeed for every cardinal \(\lambda\) between \(\delta\) and the next measurable cardinal, simply because none of these cardinals are Fr\'echet uniform by \cref{NextUniform}. Applying \cref{SigmaSilver} in \(M_\delta\) yields the corollary.
\end{proof}
\end{cor}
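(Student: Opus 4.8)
The plan is to exhibit $i$ as an internal ultrapower of $M_\delta$ and then strip off a small factor using the Silver-type factorization theorem \cref{SigmaSilver} inside $M_\delta$. Since $i$ is an internal ultrapower embedding of $M_\delta$, I would first fix a countably complete ultrafilter $U\in M_\delta$ with $j^{M_\delta}_U = i$, so that $N = M^{M_\delta}_U$. What we want to produce is a $D\in\Un_{<\delta}$ and an internal ultrapower embedding $i'\colon M^{M_\delta}_D\to N$ with $i = i'\circ j^{M_\delta}_D$, and this is exactly what \cref{SigmaSilver}, applied inside $M_\delta$ to $U$ with respect to $\delta$, delivers (taking $i'$ to be the resulting factor embedding), \emph{once} its hypothesis has been verified.

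The real work is to check that hypothesis, namely that in $M_\delta$ the ultrafilter $U$ is $\lambda$-indecomposable for every cardinal $\lambda$ with $\delta\le\lambda\le 2^\delta$. For this I would appeal to \cref{NextUniform}, whose hypotheses are met here: because $\delta$ is a successor or strongly inaccessible cardinal, the cited theorem of \cite{Frechet} gives that $\textsc{crt}(j_\delta)$ is $\delta$-strongly compact, and $\delta$ itself is not Fr\'echet in $M_\delta$ by \cref{TightCover}. Thus \cref{NextUniform} applies and the least Fr\'echet cardinal $\lambda^*$ of $M_\delta$ above $\delta$ is measurable, hence strongly inaccessible; in particular $2^\delta < \lambda^*$ as computed in $M_\delta$, so no cardinal in $[\delta,2^\delta]$ is Fr\'echet in $M_\delta$. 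Since a cardinal supports a Fr\'echet uniform countably complete ultrafilter only when it is Fr\'echet, $U$ can have no Fr\'echet uniform Rudin--Keisler predecessor on any such $\lambda$; that is, $U$ is $\lambda$-indecomposable throughout $[\delta,2^\delta]$, as required.

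Feeding this into \cref{SigmaSilver} inside $M_\delta$ produces $D$ on some $\gamma<\delta$, countably complete in $M_\delta$, together with an internal ultrapower embedding $i'\colon M^{M_\delta}_D\to N$ satisfying $i = i'\circ j^{M_\delta}_D$. To see that $D$ genuinely lies in $\Un_{<\delta}$, I would note that $P(\gamma)^{M_\delta}=P(\gamma)$ for $\gamma<\delta$ (a consequence of the $\delta$-strong compactness of $\textsc{crt}(j_\delta)$) and that countable completeness is absolute to $M_\delta$ by its closure under $\textsc{crt}(j_\delta)$-sequences, so $D$ is a genuine countably complete ultrafilter on $\gamma<\delta$. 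The step I expect to be delicate is the middle one rather than the final invocation: everything hinges on the interval $[\delta,2^\delta]$, computed in $M_\delta$, falling strictly below the next Fr\'echet cardinal of $M_\delta$, which is precisely where \cref{NextUniform} — and through it the restriction to successor or strongly inaccessible $\delta$, needed to make $\textsc{crt}(j_\delta)$ be $\delta$-strongly compact — carries the argument. Once no cardinal of $[\delta,2^\delta]$ is Fr\'echet in $M_\delta$, the indecomposability of $U$, and hence the desired factorization, are immediate.
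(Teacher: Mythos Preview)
Your proposal is correct and follows essentially the same approach as the paper: fix $U\in M_\delta$ with $j_U^{M_\delta}=i$, use \cref{NextUniform} (together with the $\delta$-strong compactness of $\textsc{crt}(j_\delta)$, which holds by the successor/inaccessible assumption on $\delta$) to see that no cardinal in $[\delta,2^\delta]^{M_\delta}$ is Fr\'echet in $M_\delta$, conclude $U$ is $\lambda$-indecomposable there, and apply \cref{SigmaSilver} inside $M_\delta$. You add a couple of details the paper leaves implicit---that $\lambda^*$ being measurable forces $(2^\delta)^{M_\delta}<\lambda^*$, and that the resulting $D$ really lies in $\Un_{<\delta}$ because $P(\gamma)\subseteq M_\delta$ for $\gamma<\delta$---but these are straightforward elaborations of the same argument rather than a different route.
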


\subsection{Combinatorics of normal fine ultrafilters}
A key ingredient in the proof of the Irreducibility Theorem is a pair of combinatorial lemmas regarding normal fine ultrafilters on \(P(\delta)\). The first lemma is due to Solovay, and the second is due to the author though it seems likely that it has already been discovered.
\begin{defn}
Suppose \(\delta\) is a regular cardinal and \(\vec S = \langle S_\alpha : \alpha < \delta\rangle\) is a stationary partition of \(S^\delta_\omega\). The {\it Solovay set} associated to \(\vec S\) is the set of all \(\sigma\subseteq \delta\) such that letting \(\gamma = \sup \sigma\), \(\sigma  = \{\alpha < \delta: S_\alpha\cap \gamma\text{ is stationary in } \gamma\}\). 
\end{defn}

\begin{thm}[Solovay]\label{Solovay}
Suppose \(\delta\) is a regular cardinal and \(\vec S\) is a stationary partition of \(S^\delta_\omega\) into \(\delta\) pieces. Then the Solovay set associated to \(\vec S\) belongs to every normal fine ultrafilter on \(P(\delta)\).\qed
\end{thm}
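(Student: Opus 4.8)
The plan is to fix a normal fine ultrafilter $\mathcal U$ on $P(\delta)$, let $j : V\to M$ be its ultrapower embedding, and reduce the statement to a single membership fact about the seed $s = [\text{id}]_{\mathcal U}$. By \cref{SupercompactnessEquiv}, $\mathcal U$ witnesses that its completeness $\kappa$ is $\delta$-supercompact, so $M$ is closed under $\delta$-sequences; in particular $s = j[\delta]\in M$, and since $M$ is closed under $\omega$-sequences, $j$ is continuous at ordinals of cofinality $\omega$ (as in the proof of \cref{KetonenCov}). Writing $\gamma = \sup j[\delta]$, it follows that $j[\delta]$ is an $\omega$-closed cofinal subset of $\gamma$ of order type $\delta$, so $\text{cf}^M(\gamma) = \delta$. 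Since $X\in\mathcal U$ iff $s\in j(X)$, and $j(\vec S) = \langle T_\xi : \xi < j(\delta)\rangle$ is a stationary partition of $(S^{j(\delta)}_\omega)^M$ in $M$, membership of the Solovay set in $\mathcal U$ unwinds to the single assertion, evaluated in $M$, that $j[\delta] = \{\xi < j(\delta) : T_\xi\cap\gamma\text{ is stationary in }\gamma\}$.

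I would then establish the two inclusions separately, both resting on the sublemma that every limit point of $j[\delta]$ below $\gamma$ of $M$-cofinality $\omega$ actually lies in $j[\delta]$ (extract from such a point an increasing $\omega$-sequence drawn from $j[\delta]$, take its sup $\alpha<\delta$, and apply continuity of $j$ at $\alpha$). For the forward inclusion, given $\alpha<\delta$ and a club $C\in M$ in $\gamma$, the pullback $E = \{\nu<\delta : j(\nu)\in C\}$ is $\omega$-club in $\delta$ (continuity at $\omega$ for closure, an alternating-sup construction for unboundedness); since $S_\alpha$ is stationary and contained in $S^\delta_\omega$, it meets $E$ at some $\xi$, and then $j(\xi)\in C\cap j(S_\alpha)\cap\gamma$, witnessing that $T_{j(\alpha)}\cap\gamma = j(S_\alpha)\cap\gamma$ is stationary. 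For the reverse inclusion, suppose $T_\xi\cap\gamma$ is stationary; intersecting with the club (in $M$) of limit points of $j[\delta]$ below $\gamma$ produces a point $\eta\in T_\xi$ that is a limit point of $j[\delta]$ of $M$-cofinality $\omega$, hence $\eta = j(\alpha)\in j[\delta]$ by the sublemma. Letting $g : S^\delta_\omega\to\delta$ be the index function of the partition $\vec S$, elementarity gives that $j(g)$ is the index function of $j(\vec S)$, so $\xi = j(g)(j(\alpha)) = j(g(\alpha))\in j[\delta]$.

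The main obstacle is the bookkeeping around the fact that $j[\delta]$ is only $\omega$-club, not club, in $\gamma$: a limit point of $j[\delta]$ of uncountable $M$-cofinality need not lie in $j[\delta]$, which is precisely why the partition $\vec S$ is taken on $S^\delta_\omega$ and why the sublemma is restricted to $M$-cofinality $\omega$. The delicate point is to verify that the preimage $E$ is genuinely $\omega$-club (it need not belong to $M$, but that is harmless) and that the relevant intersection points have $M$-cofinality $\omega$ so the sublemma applies; once these cofinality conditions are tracked carefully, the index-function computation closes the reverse inclusion cleanly.
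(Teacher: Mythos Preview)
Your argument is correct and is the standard proof of Solovay's theorem via the ultrapower characterization of normal fine ultrafilters; the reduction to showing \(j[\delta] = \{\xi < j(\delta) : T_\xi\cap\gamma\text{ is stationary in }\gamma\}\) in \(M\), the sublemma on \(\omega\)-cofinal limit points of \(j[\delta]\), and the two inclusions are all handled cleanly. The paper itself omits the proof entirely (citing \cite{MO}), so there is nothing to compare your approach against; your write-up would serve perfectly well as the missing argument.
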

We omit the proof, which appears in \cite{MO}. In particular, there is a single set of cardinality \(\delta\) on which all normal fine ultrafilters on \(P(\delta)\) concentrate. Indeed, the function \(\sup : P(\delta)\to \delta\) is one-to-one on any Solovay set. Thus any normal fine ultrafilter \(\mathcal U\) on \(P(\delta)\) is {\it isomorphic} to the ultrafilter \(\sup_*(\mathcal U)\) on \(\delta\). 

The second lemma we need is less well-known.

\begin{lma}\label{NormalGeneration} Suppose \(\lambda\) is a cardinal and \(\mathcal F\) is a normal fine filter on \(P(\lambda)\). Suppose \(D\) is an ultrafilter on \(\lambda\). Let \(S = \{\sigma\subseteq j_D(\lambda) : [\textnormal{id}]_D\in \sigma\}\). Then \(j_D[\mathcal F]\cup \{S\}\) generates \(j_D(\mathcal F)\) in the sense that any \(X\in j_D(\mathcal F)\) contains \(j_D(\bar X)\cap S\) for some \(\bar X\in \mathcal F\).
\begin{proof}
Suppose \(X\in j_D(\mathcal F)\). Fix \(\langle X_\alpha : \alpha < \lambda\rangle\) such that letting \(\langle Y_\alpha : \alpha < j_D(\lambda)\rangle\) denote \(j_D(X_\alpha : \alpha < \delta\rangle)\), \(X = Y_{[\text{id}]_D}\). We may assume without loss of generality that \(X_\alpha\in \mathcal F\) for all \(\alpha < \lambda\). Let \(\bar X = \triangle_{\alpha < \delta} X_\alpha\). 

We claim \(j_D(\bar X)\cap S\subseteq X\). Suppose \(\sigma\subseteq \lambda\) and \(\sigma\in S\cap j_D(\bar X)\). The fact that \(\sigma\in S\) means \([\text{id}]_D\in \sigma\). The fact that \(\sigma\in j_D(\bar X)\) means \(\sigma\in \triangle _{\alpha < j_D(\delta)}Y_\alpha\). By the definition of the diagonal intersection, \(\sigma\in Y_\alpha\) for all \(\alpha\in \sigma\). Therefore \(\sigma\in Y_{[\text{id}]_D} = X\), as desired.
\end{proof}
\end{lma}

\begin{defn}
Suppose \(M\) is an inner model, \(X\) and \(A\subseteq P(X)\cap M\) are sets in \(M\), and \(\mathcal U\) is an \(M\)-ultrafilter on \(A\). We say \(\mathcal U\) is a {\it fine \(M\)-ultrafilter} every element of \(X\) belongs to \(\mathcal U\)-almost all elements of \(A\).
\end{defn}

\begin{cor}
Suppose \(\lambda\) is a cardinal, \(D\) is a countably complete ultrafilter on \(\lambda\), and \(\mathcal U\) is a normal fine ultrafilter on a set \(A\subseteq P(\lambda)\). Then \(j_D(\mathcal U)\) is the unique fine \(M_D\)-ultrafilter \(\mathcal U'\) on \(j_D(A)\) such that \(j_D[\mathcal U]\subseteq \mathcal U'\).\qed
\end{cor}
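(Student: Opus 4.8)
The plan is to deduce this directly from \cref{NormalGeneration}, the one new ingredient being that fineness of any candidate $\mathcal U'$ automatically supplies the extra generator appearing there. First I would dispose of existence. Regarding $\mathcal U$ as inducing a normal fine filter on $P(\lambda)$ in the obvious way, elementarity of $j_D$ shows that $j_D(\mathcal U)$ is a fine $M_D$-ultrafilter on $j_D(A)$, and that $j_D[\mathcal U]\subseteq j_D(\mathcal U)$ since $X\in \mathcal U$ if and only if $j_D(X)\in j_D(\mathcal U)$. So $j_D(\mathcal U)$ is a witness, and only uniqueness remains.

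For uniqueness, suppose $\mathcal U'$ is any fine $M_D$-ultrafilter on $j_D(A)$ with $j_D[\mathcal U]\subseteq \mathcal U'$, and set $S = \{\sigma\subseteq j_D(\lambda) : [\text{id}]_D\in \sigma\}$ as in \cref{NormalGeneration}. Since $\text{id}(\alpha) = \alpha < \lambda$ for every $\alpha$, we have $[\text{id}]_D < j_D(\lambda)$, so $[\text{id}]_D$ is an element of the base set $j_D(\lambda)$; fineness of $\mathcal U'$ therefore gives exactly $S\cap j_D(A)\in \mathcal U'$. I would then show $j_D(\mathcal U)\subseteq \mathcal U'$: given $X\in j_D(\mathcal U)$, \cref{NormalGeneration} furnishes $\bar X\in \mathcal U$ with $j_D(\bar X)\cap S\subseteq X$; since $j_D(\bar X)\in j_D[\mathcal U]\subseteq \mathcal U'$ and $S\cap j_D(A)\in \mathcal U'$, closure of $\mathcal U'$ under intersection and upward closure yield $X\in \mathcal U'$. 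Finally, two $M_D$-ultrafilters on the same set with one contained in the other must coincide, so $\mathcal U' = j_D(\mathcal U)$, completing the proof.

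There is no real obstacle here beyond bookkeeping. The only two points requiring care are the identification of $\mathcal U$ (an ultrafilter on $A\subseteq P(\lambda)$) with the normal fine filter on $P(\lambda)$ to which \cref{NormalGeneration} literally applies, and the observation that the single seed $S$ isolated in that lemma is precisely what fineness forces into every fine extension of $j_D[\mathcal U]$. It is worth \emph{emphasizing} that it is the normality of $\mathcal U$, used inside \cref{NormalGeneration} through the diagonal intersection $\triangle_{\alpha<\lambda}X_\alpha$, that collapses the generation of $j_D(\mathcal U)$ down to the lone generator $S$; this is exactly what makes a fine extension of $j_D[\mathcal U]$ unique and hence equal to $j_D(\mathcal U)$.
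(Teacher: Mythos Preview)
Your proposal is correct and is exactly the argument the paper has in mind: the corollary is marked with \qed\ and given no proof, as it is intended to follow immediately from \cref{NormalGeneration} via precisely the fineness-gives-the-seed-$S$ observation you spell out. Your bookkeeping remarks about passing between $A$ and $P(\lambda)$ and about why $[\text{id}]_D\in j_D(\lambda)$ are the only points needing care, and you handle them correctly.
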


\subsection{Proof of the Irreducibility Theorem}

We use a slight variant of the notion of irreducibility:
\begin{defn}
Suppose \(\lambda\) is a cardinal and \(U\) is a countably complete ultrafilter. Then \(U\) is {\it \({<}\lambda\)-irreducible} if for all \(D\in \Un_{<\lambda}\) such that \(D \D U\), \(D\) is principal. An ultrapower embedding \(j : V\to M\) is {\it \({<}\lambda\)-irreducible} if for all \(D\in \Un_{<\lambda}\) such that \(j_D \D j\), \(D\) is principal.
\end{defn}

\begin{thm}[UA; Irreducibility Theorem]\label{IrredThm}
Suppose \(\delta\) is a uniform successor cardinal. Suppose \(i: V\to M\) is a \({<}\delta\)-irreducible ultrapower embedding. Then \(M^\delta\subseteq M\).
\end{thm}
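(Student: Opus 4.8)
The plan is to reduce the statement to two closure facts and extract each by comparing \(i\) with the canonical Ketonen embedding \(j_\delta\). Recall the closure lemma recorded above: for an ultrapower \(M\), one has \(M^\delta\subseteq M\) if and only if \(M\) has the tight covering property at \(\delta\) and \(P(\delta)\subseteq M\). So it suffices to prove these two statements. Write \(i=j_U\) for a countably complete ultrafilter \(U\) and set \(\kappa=\textsc{crt}(j_\delta)\). Since \(\delta\) is a uniform successor cardinal, the cited theorem of \cite{Frechet} gives that \(\kappa\) is \(\delta\)-strongly compact, \cref{crtchar} gives \(\kappa=\kappa_\delta\), \cref{LeastSuper} gives that \(j_\delta\) witnesses the \(\delta\)-supercompactness of \(\kappa\), and the successor-cardinal closure theorem gives \((M_\delta)^\delta\subseteq M_\delta\); in particular \(P(\delta)\subseteq M_\delta\), \(M_\delta\) tight-covers at \(\delta\), and \((\delta^{<\kappa})^{M_\delta}=\delta\). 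These facts about \(M_\delta\) are the reservoir of structure I intend to pour into \(M\).

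I would first dispose of the degenerate case \(\textsc{crt}(i)>\delta\): then \(i(\delta)=\delta\) and \(i(X)=X\) for every \(X\subseteq\delta\), so \(P(\delta)\subseteq M\), while \(\sup i[\delta]=i(\delta)=\delta\) is regular in \(M\), so \cref{KetonenCov} yields tight covering at \(\delta\) and the closure lemma finishes the case. So assume \(\textsc{crt}(i)\le\delta\). Here I would compare \(i\) with \(j_\delta\): by \cref{ZeroComparison} there is a comparison \((i_*,j^M_{\delta'}):(M_\delta,M)\to N\) with \(i_*\) internal to \(M_\delta\) and \(\delta'=\textnormal{cf}^M(\sup i[\delta])\), where \(j^M_{\delta'}\) is the Ketonen embedding of \(M\) at \(\delta'\). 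Irreducibility now enters through Silver's factorization: if \(U\) were \(\lambda\)-decomposable for every \(\lambda\in[\delta,2^\delta]\), then \cref{SigmaSilver} would produce a countably complete \(D\) on some \(\gamma<\delta\) and an internal \(k:M_D\to M\) with \(\textsc{crt}(k)>(2^\delta)^+\), exhibiting \(D\D U\); since \(i\) is \({<}\delta\)-irreducible, \(D\) must be principal, whence \(k=i\) and \(\textsc{crt}(i)>\delta\), contradicting the case hypothesis. Thus \(U\) is \(\lambda\)-indecomposable for some \(\lambda\in[\delta,2^\delta]\), the lever that prevents \(i\) from spreading out past \(\delta\). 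Combined with \cref{DeltaFactor} applied to \(i_*\) (again using that \(\delta\) is a uniform successor cardinal, so by \cref{NextUniform} there are no Fréchet cardinals of \(M_\delta\) just above \(\delta\)), this should pin \(\delta'\le\delta\); since \(\textnormal{cf}^M(\sup i[\delta])\ge\delta\) always, one gets \(\delta'=\delta\), which by \cref{KetonenCov} is exactly tight covering at \(\delta\).

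For \(P(\delta)\subseteq M\) I would transplant the argument of \cref{TightCover} into \(M\). From \((\delta^{<\kappa})=\delta\) and \cref{Hausdorff} fix a \(\kappa\)-independent family \(F\) of subsets of \(\delta\) with \(|F|=\delta\), which remains \(\kappa\)-independent in \(M\) by \cref{IndAbsolute} once the tight covering step supplies enough closure. For each \(S\subseteq F\), \cref{IndependentFilter} makes \(S\cup\{\delta\setminus A:A\in F\setminus S\}\) generate a \(\kappa\)-complete filter on \(\delta\) generated by \(\delta\) sets; the \(\delta\)-strong compactness of \(\kappa\) extends it to a \(\kappa\)-complete ultrafilter \(W\) on \(\delta\), and \(S=(W\cap M)\cap F\), so it remains only to see \(W\cap M\in M\). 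Here I expect to use the normal-fine-ultrafilter combinatorics set up above — the Solovay set of \cref{Solovay} on which \(\sup\) is injective, together with the generation lemma \cref{NormalGeneration} — to certify that the trace \(W\cap M\) is captured by \(M\), in the same spirit that \cref{KetonenInternal} certified absorption for \(M_\delta\). Once \(P(F)\subseteq M\) and \(|F|=\delta\), we conclude \(P(\delta)\subseteq M\).

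The hard part will be the tight-covering step, namely showing \(\textnormal{cf}^M(\sup i[\delta])=\delta\) for a general \({<}\delta\)-irreducible \(M\) rather than for the canonical \(M_\delta\). For \(M_\delta\) this followed from the clean dichotomy of \cref{CoverDichotomy}, proved via the universal property of \(j_\delta\) and unavailable for arbitrary \(i\). The delicate point is to convert the purely local information supplied by irreducibility — indecomposability of \(U\) somewhere in \([\delta,2^\delta]\) and the absence of nonprincipal factors below \(\delta\) — into the global cofinality statement \(\textnormal{cf}^M(\sup i[\delta])\le\delta\), by threading it through the comparison \((i_*,j^M_{\delta'})\) and the factorization of \(i_*\) furnished by \cref{DeltaFactor}. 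I expect this transfer, rather than either closure computation, to be the crux of the argument.
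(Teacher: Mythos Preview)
Your overall framing — reduce to tight covering plus \(P(\delta)\subseteq M\), then try to verify each separately — is not how the paper proceeds, and the route you sketch has a real gap at the point you yourself flag as ``the crux.''

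You correctly invoke \cref{DeltaFactor} on \(i_*\), obtaining \(D\in\Un_{<\delta}\) with \(i_*=i'\circ j_D^{M_\delta}\) and \(\textsc{crt}(i')>j_D(\delta)\). But you then hope that this, together with some indecomposability of \(U\) in \([\delta,2^\delta]\), will force \(\delta'\le\delta\). It will not. The factor \(D\) lives as a divisor of \(i_*\) inside \(M_\delta\); nothing yet connects it to \(i\). In fact one can show \(\delta'=j_D(\delta)\), so \(\delta'=\delta\) is equivalent to \(D\) being principal — and that is exactly what must be proved, not assumed. The separate application of \cref{SigmaSilver} to \(U\) itself only tells you \(U\) is decomposable at some \(\lambda\in[\delta,2^\delta]\), which is no help here.

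The missing idea is to prove \(D\D i\) (a Rudin--Frolik divisibility in \(V\)), so that the \({<}\delta\)-irreducibility of \(i\) kills \(D\). The paper does this in two steps. First, an \emph{agreement claim}: \(d(j_\delta)\restriction N=j_{\delta'}^M\restriction N\) where \(d=j_D\). This is where the Solovay set of \cref{Solovay} and the generation lemma \cref{NormalGeneration} are actually deployed — not to absorb ultrafilters into \(M\) as you propose, but to show \(d(U_\delta)\) coincides with the Ketonen ultrafilter \((U_{\delta'})^M\) after establishing \(d(\delta)=\delta'\) via a cofinality computation. Second, the agreement claim feeds into a pushout argument (\cref{Pushout}, \cref{CanonicalInternal}): the canonical comparison \((k,\ell)\) of \((d,i)\) must factor \(j_{\delta'}^M\), which is irreducible in \(M\), forcing \(\ell=\text{id}\) or the degenerate case; either way \(d\D i\). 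Once \(D\) is principal, \(\textsc{crt}(i_*)>\delta\), so \(\text{Ord}^\delta\cap M_\delta\subseteq N\subseteq M\), and since \((M_\delta)^\delta\subseteq M_\delta\) this yields \(i[\delta]\in M\) and hence \(M^\delta\subseteq M\) in one stroke.

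Your independent-family plan for \(P(\delta)\subseteq M\) cannot be completed as stated: the step ``\(W\cap M\in M\)'' needs an analogue of \cref{KetonenInternal} for \(M\), and there is none — that lemma depends on the universal property of \(j_\delta\), which an arbitrary \({<}\delta\)-irreducible \(i\) does not enjoy. The paper sidesteps this entirely by obtaining \(\text{Ord}^\delta\subseteq M\) directly from the comparison.
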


To prove \cref{IrredThm}, one propagates the closure of \(M_\delta\) to \(M\) by comparing the two models. More specifically, the strategy of the proof of \cref{IrredThm} is to take the canonical comparison \((i_*,j) : (M_\delta,M)\to N\)  of \((j_\delta,i)\) and show that \(\textsc{crt}(i_*) > \delta\). It follows that \(N^\delta = (N^\delta)^{M_\delta}\subseteq N\) by applying in \(M_\delta\) the standard fact that the ultrapower by a \(\delta\)-complete ultrafilter is closed under \(\delta\)-sequences. But \(N\) is also an internal ultrapower of \(M\), so \(N\subseteq M\). In particular \(\text{Ord}^\delta\subseteq N\subseteq M\). But this implies \(i[\delta]\in M\), so \((M)^\delta\subseteq M\) by the standard criterion for closure of ultrapowers under \(\delta\)-sequences. 

\begin{proof}[Proof of \cref{IrredThm}]
Let \(\delta' = \text{cf}^M(\sup i[\delta])\) and let \(i_* : M_\delta\to N\) be the internal ultrapower embedding given by \cref{ZeroComparison} such that \[(i_*,j^M_{\delta'}) : (M_\delta,M)\to N\] is a comparison of \((j_\delta,i)\). By \cref{DeltaFactor}, there is some \(D\in \Un_{<\delta}\) and an internal ultrapower embedding \(i' : (M_D)^{M_\delta}\to N\) such that \(i_* = i'\circ j^{M_\delta}_D\) and \(\textsc{crt}(i') > j_D^{M_\delta}(\delta)\). 

To avoid superscripts, we introduce the following notation. Let \(d = j_D\) and let \(P = M_D\). Then \(j_D^{M_\delta} = d\restriction M_\delta\) and \(M_D^{M_\delta} = d(M_\delta) = (M_{d(\delta)})^P\) since \((M_\delta)^\delta\subseteq M_\delta\). Thus we have the commutative diagram \cref{Irred1Fig}.

\begin{figure}[htbp]
   \centering
\includegraphics[scale=.7]{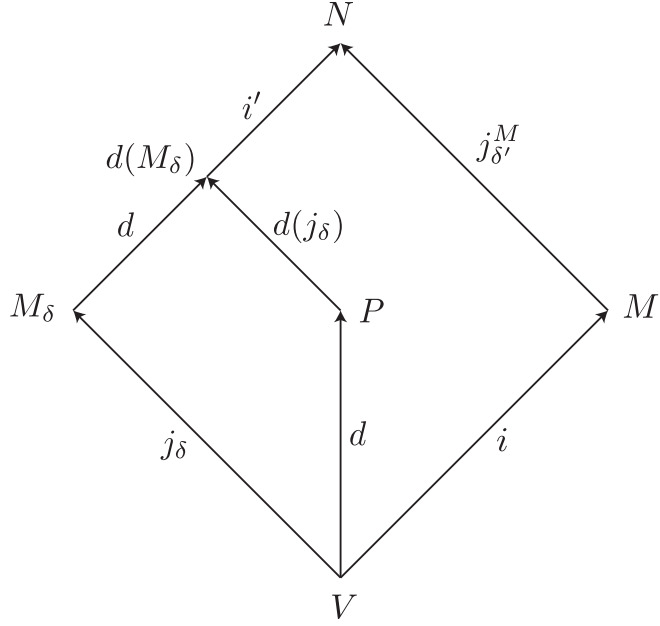}
   \caption{Comparing \((j_\delta,i)\), discovering \(d\).}
\label{Irred1Fig}
\end{figure}

The bulk of the proof is contained in the following claim: 
\begin{clm}\label{AgreementClm}
\(d(j_\delta) \restriction N = j_{\delta'}^M\restriction N\).
\end{clm}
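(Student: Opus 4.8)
The plan is to verify that the two maps agree as \emph{functions} on the inner model $N$, using that $d(j_\delta)$ is elementary over $P\supseteq N$ while $j^M_{\delta'}$ is elementary over $M\supseteq N$ (recall $N\subseteq M$ and $N\subseteq (M_{d(\delta)})^P\subseteq P$). First I would reduce to a generating set. Since $(i_*,j^M_{\delta'})$ is the comparison produced by \cref{ZeroComparison}, $N$ is the Ketonen ultrapower $M^M_{\delta'}$ of $M$, so every element of $N$ has the form $j^M_{\delta'}(h)(s)$ with $h\in M$ and $s=\sup j^M_{\delta'}[\delta']$ the seed of $j^M_{\delta'}$. Because function application is absolute and both maps are elementary over transitive models containing $N$, agreement on all of $N$ follows once $d(j_\delta)$ and $j^M_{\delta'}$ agree (i) on the range $j^M_{\delta'}[M]$ and (ii) at the single ordinal $s$. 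It is important that this reduction uses only the elementarity of the \emph{full} maps over $P$ and $M$: the restrictions $d(j_\delta)\restriction N$ and $j^M_{\delta'}\restriction N$ are not themselves elementary on $N$, so one cannot simply check agreement on a generating set and propagate by a Skolem-term argument.

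For step (i) the essential datum is the commutative square $i'\circ d(j_\delta)\circ d = j^M_{\delta'}\circ i$, which follows from $i_*\circ j_\delta=j^M_{\delta'}\circ i$, the factorization $i_*=i'\circ(d\restriction M_\delta)$, and the coherence $d\circ j_\delta=d(j_\delta)\circ d$ (elementarity of $d$), together with the large critical point $\textsc{crt}(i')>d(\delta)$ supplied by \cref{DeltaFactor}. Since $i'$ is injective, agreement at a point $a\in N$ is equivalent to $i'(d(j_\delta)(a))=i'(j^M_{\delta'}(a))$, so the goal becomes matching the two ultrapower embeddings $i'\circ d(j_\delta)$ and $i'\circ j^M_{\delta'}$ on $j^M_{\delta'}[M]$; the square already identifies them on the range of $d$. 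The role of $\textsc{crt}(i')>d(\delta)$ is to force $i'$ to act trivially on the part of $(M_{d(\delta)})^P$ relevant to $N$, so that the square can be ``solved'' for $d(j_\delta)\restriction N$. I would try to make this precise either by a direct computation controlling where $i'$ moves the relevant seeds, or, more robustly, by invoking the pushout property of the canonical comparison (\cref{CanonicalInternal}) to pin down the unique internal structure of $N$.

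For step (ii) I would compute the two values at $s$ by a cofinality analysis. One first shows $\delta'<d(\delta)$ (the $M$-cofinality of $\sup i[\delta]$ is computed below $d(\delta)$ precisely because $i'$ has critical point above $d(\delta)$), and then reads off that $\textnormal{cf}^P(s)=\textnormal{cf}^M(s)=\delta'$. Since $d(j_\delta)=j^P_{d(\delta)}$ is zero order at $d(\delta)$ and $\textnormal{cf}^P(s)=\delta'<d(\delta)$, the map $d(j_\delta)$ is continuous at $s$, so $d(j_\delta)(s)=\sup d(j_\delta)[s]$. By contrast $j^M_{\delta'}$, being Ketonen at $\delta'$, is \emph{discontinuous} at $s$ because $\textnormal{cf}^M(s)=\delta'$; its value $j^M_{\delta'}(s)$ must then be reconciled with $\sup d(j_\delta)[s]$ using the agreement on $j^M_{\delta'}[M]$ already obtained in step (i), applied to a family of ordinals cofinal in $s$.

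I expect step (i) to be the main obstacle, for a structural reason: restriction to $N$ destroys elementarity, so the agreement cannot be bootstrapped inductively. Indeed, unfolding $h\in M$ as $i(f)(e)$ and trying to reduce simply reproduces an instance of the same claim on $j^M_{\delta'}[M]$, so the argument is genuinely circular if approached pointwise. The real content is therefore a closure statement, namely that $d(j_\delta)$ maps $N$ \emph{into} $N$ (this is implicit in the claim, since $j^M_{\delta'}\restriction N$ lands in $N$), and I anticipate that establishing this closure together with the accompanying uniqueness will require exploiting $\textsc{crt}(i')>d(\delta)$ in an essential, global way—most likely through the minimality of the canonical comparison rather than through any elementwise estimate.
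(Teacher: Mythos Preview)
Your proposal has a genuine error and misses the paper's actual strategy entirely.

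First, your step (ii) rests on the assertion that $\delta' < d(\delta)$, but this is false: the paper proves $\delta' = d(\delta)$. Your cofinality analysis therefore collapses. In particular, $d(j_\delta) = (j_{d(\delta)})^P$ is \emph{not} continuous at $s = \sup j^M_{\delta'}[\delta']$, since $\textnormal{cf}^P(s) = d(\delta)$; both maps are discontinuous there, and no reconciliation of the type you sketch is available. The equality $\delta' = d(\delta)$ is in fact the heart of the matter, and the paper devotes a substantial calculation to it: one first shows $N$ is closed under $d(\delta)$-sequences over $P$ and computes $\textnormal{cf}^N(\sup j^M_{\delta'}[\delta']) = d(\delta)$ via the commutative diagram, then uses tight covering of $N$ over $M$ at $\delta'$ (ruling out the inaccessible case by noting $d(\delta)$ is a successor) to get $\textnormal{cf}^N(\sup j^M_{\delta'}[\delta']) = \delta'$.

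Second, once $\delta' = d(\delta)$ is known, the paper does not attempt any pointwise verification on generators. Instead, it establishes the agreement $\textnormal{Ord}^{d(\delta)}\cap P = \textnormal{Ord}^{d(\delta)}\cap N = \textnormal{Ord}^{d(\delta)}\cap M$ and then proves the \emph{ultrafilter identity} $d(U_\delta) = (U_{d(\delta)})^M$. This is done combinatorially: one checks that $W = D^-((U_{d(\delta)})^M)$ is Ketonen at $\delta$ (using weak normality and the agreement of $\Un_{<d(\delta)}$ across the three models), whence $W = U_\delta$ by \cref{KetonenUnique}; the inclusion $d[U_\delta]\subseteq (U_{d(\delta)})^M$ is then upgraded to equality of the associated normal fine ultrafilters via Solovay sets and \cref{NormalGeneration}. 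The claim then follows immediately, since both $d(j_\delta)\restriction N$ and $j^M_{\delta'}\restriction N$ are the ultrapower of $N$ by the same ultrafilter. Your step (i), which you yourself diagnose as circular, is simply bypassed by this ultrafilter-level argument; there is no attempt to match the maps on $j^M_{\delta'}[M]$ directly.
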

\begin{proof}
We start by computing some closure properties of \(N\) relative to \(M\) and \(P\), leading to a proof that \(d(\delta) = \delta'\).

First, \(N\) is closed under \(d(\delta)\)-sequences relative to \(P\). This is because \(d(M_\delta)\) is closed under \(d(\delta)\)-sequences relative to \(P\) (by the elementarity of \(d\) and the fact that \(M_\delta\) is closed under \(\delta\)-sequences), and \(N\) is closed under \(d(\delta)\)-sequences relative to \(d(M_\delta)\) (since \(i'\) is an ultrapower embedding and \(\textsc{crt}(i') > d(\delta)\)). This facilitates the following calculation:

\begin{align} \text{cf}^N(\sup j_{\delta'}^M[\delta']) &= \text{cf}^N(\sup j^M_{\delta'}[\sup i[\delta]])\label{cofsup}\\
&= \text{cf}^N(\sup j^M_{\delta'}\circ i[\delta])\nonumber\\
&= \text{cf}^N(\sup i'\circ d(j_\delta)\circ d[\delta])\label{compid}\\
&= \text{cf}^N(\sup i'\circ d(j_\delta)[d(\delta)])\label{contd}\\
&= d(\delta)\label{sc}
\end{align}
\cref{cofsup} follows from the fact that \(\delta' = \text{cf}^M(\sup i[\delta])\). \cref{compid} follows from the fact that \(j^M_{\delta'}\circ i = i'\circ d(j_\delta)\circ d\). \cref{contd} follows from the fact that \(\sup d[\delta] = d(\delta)\), since \(\delta\) is regular and \(d = j_D\) for some \(D\in \Un_{<\delta}\). Finally \cref{sc} follows from the fact that \(i'\circ d(j_\delta)[d(\delta)]\in N\), since \(N\) is closed under \(d(\delta)\)-sequences relative to \(P\) and \(i'\circ d(j_\delta)\) is an internal ultrapower embedding of \(P\).

Second, \(N\) has the tight covering property at \(\delta'\) relative to \(M\). To see this, we use the contrapositive of \cref{TightCover}: if \(N = M_{\delta'}^M\) does not have the tight covering property at \(\delta'\), then \(\delta'\) is weakly inaccessible in \(M\) and \(\sup j_{\delta'}^M[\delta']\) is regular in \(N\).  But then \(\sup j_{\delta'}^M[\delta'] = \text{cf}^N(\sup j_{\delta'}^M[\delta']) = d(\delta)\). But \(\sup j_{\delta'}^M[\delta']\) is a limit cardinal of \(N\) since \(\delta'\) is weakly inaccessible in \(M\), while \(d(\delta)\) is a successor cardinal of \(P\) and hence a successor cardinal of \(N\) since \(\delta\) is a successor cardinal by assumption. This is a contradiction.

Now \(\text{cf}^N(\sup j_{\delta'}^M[\delta']) = \delta'\) by the tight covering property. Combining this with \cref{sc} above, \[d(\delta) =\delta'\]

Since \(d(\delta)\) is a successor cardinal in \(P\) and \(N\) is closed under \(d(\delta)\)-sequences relative to \(P\), \(d(\delta)\) is a successor cardinal in \(N\). Since \(N\subseteq M\), \(d(\delta)\) is a successor cardinal in \(M\). Therefore since \(N = (M_{d(\delta)})^M\), \cref{LeastSuper} applied in \(M\) implies that \(N\) is closed under \(\delta'\)-sequences relative to \(M\). We therefore have
\begin{equation}\label{MPAgreement}\text{Ord}^{d(\delta)}\cap P = \text{Ord}^{d(\delta)}\cap N = \text{Ord}^{d(\delta)}\cap M\end{equation}

We now work directly with ultrafilters instead of ultrapower embeddings. Recall that \(U_\delta\) denotes the ultrafilter derived from \(j_\delta\) using \(\sup j_\delta[\delta]\). Let \(U' = (U_{d(\delta)})^M\). We claim that \(U' = d(U_\delta)\). This at least makes sense because \(U'\subseteq P(d(\delta))\cap M\) and \(d(U_\delta)\subseteq P(d(\delta))\cap P\) and \(P(d(\delta))\cap M = P(d(\delta))\cap P\) by \cref{MPAgreement}.

Let \(W = D^-(U') = \{X\subseteq \delta : d(X)\in U'\}\). It is easy to see that \(W\) is a countably complete ultrafilter on \(\delta\).

Note that \(\{\alpha < \delta : \Un_\alpha = \emptyset\}\in W\) since \(\{\alpha < d(\delta) :\Un^{M}_\alpha = \emptyset\}\in U'\) and
\begin{align}d(\{\alpha <\delta : \Un_\alpha = \emptyset\})  &= \{\alpha < d(\delta) :\Un^{P}_\alpha = \emptyset\}\nonumber \\
&= \{\alpha < d(\delta):\Un^{N}_\alpha = \emptyset\}\label{UniformAbs0} \\
&= \{\alpha < d(\delta) :\Un^{M}_\alpha = \emptyset\} \label{UniformAbs1}
\end{align}
\cref{UniformAbs0} and \cref{UniformAbs1} follow from \cref{MPAgreement} and \cref{CombinatorialInternal}, which together imply \(\Un_{<d(\delta)}^P = \Un_{<d(\delta)}^N = \Un_{<d(\delta)}^P\).

Moreover, we claim \(W\) is weakly normal, in the sense that any regressive function on \(\delta\) is bounded below \(\delta\) on a \(W\)-large set. This follows from the fact that \(d\) is continuous at \(\delta\). Suppose \(f : \delta\to \delta\) is regressive. Then since \(U'\) is Ketonen in \(M\), by \cref{KetonenCombinatorial}, \(d(f)\) is bounded below \(d(\delta)\) on a \(U'\)-large set. But then there is some \(\bar \xi < \delta\) such that \(\xi < d(\bar \xi)\). Hence \(\{\alpha < d(\delta) : d(f)(\alpha) < d(\xi)\}\in U'\). Thus \(\{\alpha < \delta : f(\alpha) < \xi\}\in W\), so \(f\) is bounded below \(\delta\) on a \(W\)-large set.

By \cref{KetonenCombinatorial}, it follows that \(W\) is Ketonen at \(\delta\). Therefore by \cref{KetonenUnique}, \(W = U_\delta\). In other words,
\begin{equation}\label{dimage}
d[U_\delta]\subseteq U'
\end{equation}

Let \(\mathcal U\) be the normal fine ultrafilter on \(P(\delta)\) derived from \(j_\delta[\delta]\). Working in \(M\), let \(\mathcal U'\) be the normal fine ultrafilter on \(P(d(\delta))\cap M\) derived from \(j^M_{d(\delta)}\) using \(j^M_{d(\delta)}[d(\delta)]\). We want to use \cref{NormalGeneration} to show essentially that \(d(\mathcal U) = \mathcal U'\).

Fix a partition \(\vec S = \langle S_\alpha : \alpha < \delta\rangle\) of \(S^\delta_\omega\) into stationary sets. Let \(A\subseteq P(\delta)\) be the Solovay set associated to \(\vec S\). Thus \(A\) consists of those  \(\sigma \subseteq \delta\) such that \(\sigma = \{\xi < \gamma : S_\xi\text{ reflects to }\gamma\}\) where \(\gamma = \sup \sigma\). Now \(d(A)\) is the Solovay set defined from \(d(\vec S)\) in \(P\), but by \cref{MPAgreement}, \(d(A)\) is also the Solovay set defined from \(d(\vec S)\) in \(M\). By \cref{Solovay}, \(d(A)\in \mathcal U'\).

Since \(P(A)\cap P = P(A)\cap M\) and \(\mathcal U'\restriction d(A)\) is an normal fine \(M\)-ultrafilter on \(d(A)\), \(\mathcal U'\restriction d(A)\) is an normal fine \(P\)-ultrafilter on \(d(A)\).

Let \(s : \delta\to A\) be the inverse of the sup function on \(A\). Then \(s_*(U_\delta) = \mathcal U\restriction A\). Moreover \(d(s) : d(\delta)\to d(A)\) is the inverse of the sup function on \(d(A)\) so \(d(s)_*(U') = \mathcal U'\restriction d(A)\). Since \(d[U_\delta]\subseteq U'\), if \(X\in \mathcal U\restriction A\), then \(s^{-1}[X]\in U_\delta\), so \(d(s^{-1}[X])\in U'\), so \(X\in \mathcal U'\restriction d(A)\). Thus \[d[\mathcal U\restriction A]\subseteq \mathcal U'\restriction d(A)\]
Since \(\mathcal U'\) is fine in \(M\), letting \(S = \{\sigma\in d(A) : [\text{id}]_D\in \sigma\}\), \(S\in \mathcal U'\restriction d(A)\). But by \cref{NormalGeneration}, \(d(\mathcal U\restriction A)\) is generated by \(d[\mathcal U\restriction A]\cup \{S\}\). Thus \(d(\mathcal U\restriction A) =  \mathcal U'\restriction d(A)\). But then \[U' = \textstyle\sup_*(\mathcal U'\restriction d(A)) = \sup_*(d(\mathcal U\restriction A)) = d(\sup_*(\mathcal U\restriction A)) = d(U_\delta)\]
Thus \(U' = d(U_\delta)\), as claimed.

Finally, applying \cref{MPAgreement} again, \[d(j_\delta) \restriction N  = j_{d(U_\delta)}^N = j_{U'}^N = j_{\delta'}^M \restriction N \]
This proves the claim.
\end{proof}

The following claim will easily imply the theorem:
\begin{clm}\label{RFClm}
\(d\D i\).
\end{clm}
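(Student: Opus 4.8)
The plan is to extract the witness to \(d\D i\) from the comparison of \((d,i)\) already implicit in the factorization we have found, and then let \({<}\delta\)-irreducibility do the rest. Put \(\ell = i'\circ d(j_\delta)\colon M_D\to N\). Since \(d(j_\delta)\) is internal to \(M_D\) (it is \(d\) applied to the internal ultrapower embedding \(j_\delta\)) and \(i'\) is internal to \(d(M_\delta)\), the composite \(\ell\) is an internal ultrapower embedding of \(M_D\). Moreover \(\ell\circ d = i'\circ d(j_\delta)\circ d = i'\circ (j_D\restriction M_\delta)\circ j_\delta = i_*\circ j_\delta = j^M_{\delta'}\circ i\), so \((\ell,j^M_{\delta'})\colon (M_D,M)\to N\) is a comparison of \((d,i)\). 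Granting \(d\D i\), the claim is complete: since \(D\in\Un_{<\delta}\) and \(i\) is \({<}\delta\)-irreducible, \(D\) is principal, hence \(d = \textnormal{id}\), \(\delta' = d(\delta) = \delta\), and \(\textsc{crt}(i_*) = \textsc{crt}(i') > \delta\), which is exactly what the remainder of the proof of \cref{IrredThm} requires.

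To witness \(d\D i\) it suffices to produce an internal ultrapower embedding \(k\colon M_D\to M\) with \(i = k\circ d\), and the natural candidate is \(k = (j^M_{\delta'})^{-1}\circ\ell\). (If \(\delta'\) is not uniform in \(M\) then \(j^M_{\delta'} = \textnormal{id}\), \(N = M\), and \(k = \ell\) works immediately, so assume \(\delta'\) is uniform in \(M\).) For \(k\) to be well defined I must check \(\ell[M_D]\subseteq\textnormal{ran}(j^M_{\delta'})\). Writing \(\xi = [\textnormal{id}]_D\), we have \(M_D = H^{M_D}(d[V]\cup\{\xi\})\), so \(\ell[M_D] = H^N(\ell[d[V]]\cup\{\ell(\xi)\})\). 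Since \(\ell\circ d = j^M_{\delta'}\circ i\), the set \(\ell[d[V]] = j^M_{\delta'}[i[V]]\) lies in \(\textnormal{ran}(j^M_{\delta'})\), and \(\textnormal{ran}(j^M_{\delta'})\) is closed under the Skolem functions of \(N\); thus everything reduces to showing the single point \(\ell(\xi)\) lies in \(\textnormal{ran}(j^M_{\delta'})\).

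This is the heart of the matter. By \cref{AgreementClm}, \(d(j_\delta)\restriction N = j^M_{\delta'}\restriction N\), and as \(\xi < d(\delta) = \delta'\) is an ordinal and hence lies in \(N\), we get \(d(j_\delta)(\xi) = j^M_{\delta'}(\xi)\); therefore \(\ell(\xi) = i'\bigl(j^M_{\delta'}(\xi)\bigr)\). It remains to see that \(i'\) fixes the ordinal \(j^M_{\delta'}(\xi)\). Here the size of \(j^M_{\delta'}(\xi)\) is controlled: \(j^M_{\delta'}\) is the ultrapower of \(M\) by a uniform ultrafilter on \(\delta'\), so \(j^M_{\delta'}(\xi) < j^M_{\delta'}((\delta')^+)\), an ordinal of cardinality at most \(2^{\delta'}\) in \(M\). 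On the other hand the Silver factorization of \cref{DeltaFactor} may be run on the entire interval from \(\delta\) up to the least Fr\'echet cardinal of \(M_\delta\) above \(\delta\)—which is measurable, and so well above \((2^\delta)^+\), by \cref{NextUniform}—so that \(\textsc{crt}(i')\) dwarfs \((2^{\delta'})^+\). Consequently \(i'\) fixes \(j^M_{\delta'}(\xi)\) and \(\ell(\xi) = j^M_{\delta'}(\xi)\in\textnormal{ran}(j^M_{\delta'})\). I expect the careful bookkeeping of this critical-point bound, and in particular its transfer across the embedding \(d\), to be the main obstacle in the argument.

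With \(\ell[M_D]\subseteq\textnormal{ran}(j^M_{\delta'})\) established, \(k = (j^M_{\delta'})^{-1}\circ\ell\colon M_D\to M\) is a well-defined elementary embedding with \(k\circ d = i\) and \(j^M_{\delta'}\circ k = \ell\). As \(d\) and \(i\) are ultrapower embeddings of \(V\) with \(k\circ d = i\), \cite{RF} Lemma 5.14 shows \(k\) is an ultrapower embedding of \(M_D\); and since \(\ell\) is internal to \(M_D\), \(j^M_{\delta'}\) is elementary, and \(\ell = j^M_{\delta'}\circ k\), \cite{RF} Lemma 5.6 shows \(k\) is internal to \(M_D\). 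Thus \(k\) witnesses \(d\D i\), completing the proof of \cref{RFClm}.
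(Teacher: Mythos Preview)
Your approach is genuinely different from the paper's, and the gap you yourself flag—``the careful bookkeeping of this critical-point bound''—is real and not closable by the method you sketch. You need $i'$ to fix $d(j_\delta)(\xi)$, i.e.\ $\textsc{crt}(i') > d(j_\delta)(\xi)$; since $\xi < d(\delta)$ this requires roughly $\textsc{crt}(i') > d(j_\delta(\delta))$. But \cref{SigmaSilver} applied in $M_\delta$ only yields $\textsc{crt}(i') > ((2^\delta)^+)^{M_\delta}$, and ``running Silver on the entire interval up to the next Fr\'echet cardinal $\mu$ of $M_\delta$'' does not improve this: Silver's theorem has a fixed conclusion from indecomposability on $[\delta,2^\delta]$, and widening the indecomposability interval does not widen the conclusion. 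Worse, $\mu \leq j_\delta(\delta)$, since $j_\delta(U_\delta)\in M_\delta$ already witnesses that $j_\delta(\delta)$ is Fr\'echet there—so even an optimistic reading of your sketch cannot reach $d(j_\delta(\delta))$. One can extract a bit more from the original \cref{Silver} (namely $\textsc{crt}(i') \geq i_*\bigl(((2^\delta)^+)^{M_\delta}\bigr) \geq d\bigl(((2^\delta)^+)^{M_\delta}\bigr)$), but then one still needs $j_\delta(\delta) < ((2^\delta)^+)^{M_\delta}$, and there is no evident reason $M_\delta$ computes $(2^\delta)^+$ large enough for this.

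The paper avoids the critical-point calculation entirely. It takes the \emph{pushout} $(k,\ell):(P,M)\to Q$ of $(d,i)$ and observes that your comparison $(i'\circ d(j_\delta),\,j_{\delta'}^M):(P,M)\to N$ must factor through it via some internal $h:Q\to N$ with $j_{\delta'}^M = h\circ\ell$. The new ingredient is that $j_{\delta'}^M$, being the Ketonen embedding of $M$ at $\delta'$, is \emph{irreducible}; so either $\ell$ is the identity (giving $d\D i$ at once) or $h$ is. In the latter case $(k,\ell)=(i'\circ d(j_\delta),\,j_{\delta'}^M)$ is itself the pushout, and \cref{AgreementClm} is deployed quite differently from your use of it: the single map $e = d(j_\delta)\restriction N = j_{\delta'}^M\restriction N$ is amenable to both $P$ and $M$, hence internal to $N$ by \cref{CanonicalInternal}; but $j_{\delta'}^M$ internal to its own target $N=(M_{\delta'})^M$ forces $j_{\delta'}^M=\text{id}$, so again $N=M$ and $d\D i$. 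No bound on $\textsc{crt}(i')$ beyond $d(\delta)$ is ever invoked.
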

\begin{proof}[Proof of \cref{RFClm}]
Let \((k,\ell) : (P,M)\to Q\) be the canonical comparison of \((d,i)\).

Since \((k,\ell)\) is the canonical comparison of \((d,i)\), by \cref{Pushout}, \((k,\ell)\) is the pushout of \((d,i)\). Therefore since \((i' \circ d(j_\delta),j_{\delta'}^M) : (P,M)\to N\) is also a comparison of \((d,i)\), there is an internal ultrapower embedding \(h : Q\to N\) such that \(j_{\delta'}^M = h\circ \ell\). Thus we have the commutative diagram \cref{Irred2Fig}.

\begin{figure}[htbp]
   \centering
\includegraphics[scale=.7]{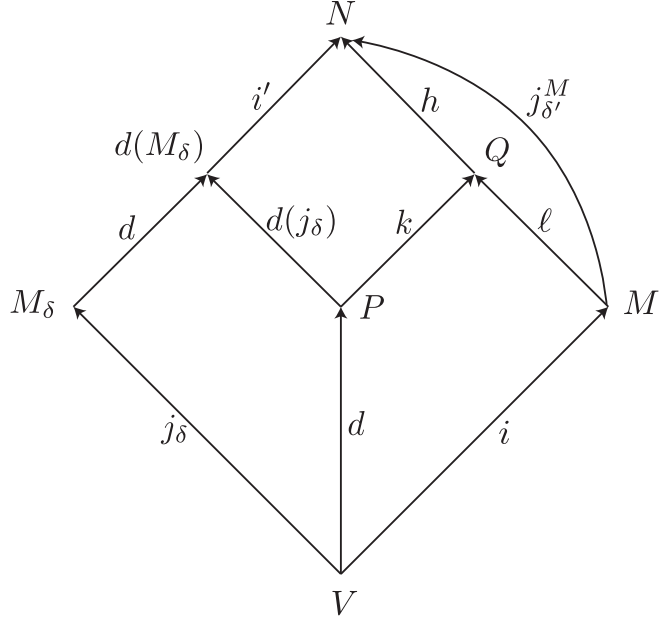}
   \caption{Comparing \((d,i)\), discovering \(k\).}
\label{Irred2Fig}
\end{figure}

Since \(j_{\delta'}^M\) is an irreducible ultrapower embedding of \(M\) and \(j_{\delta'}^M = h\circ \ell\), either \(\ell\) or \(h\) is the identity.

\begin{case} \(\ell\) is the identity.\end{case}
By case hypothesis, we have that \(Q = M\) and \(k :P\to M\) is an internal ultrapower embedding of \(P\) such that \(i =\ell\circ i = k\circ d\). Thus \(d\D i\). 

\begin{case} \(h\) is the identity.\end{case}
We will show in this case that in fact \(N = M\) and \(j_{\delta'}^M\) is the identity. By case hypothesis, we have the commutative diagram \cref{Irred3Fig}.

\begin{figure}[htbp]
   \centering
\includegraphics[scale=.7]{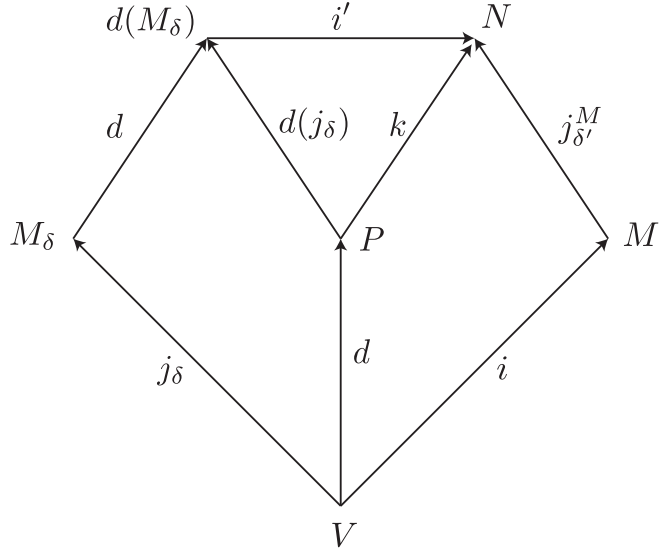}
   \caption{The case \(h = \text{id}\).}
\label{Irred3Fig}
\end{figure}

Let \(e = d(j_\delta) \restriction N = j_{\delta'}^M\restriction N\), using \cref{AgreementClm}. Then since \(e = d(j_\delta)\restriction N\), \(e\) is amenable to \(P\), and since \(e = j_{\delta'}^M\restriction N\), \(e\) is amenable to \(M\). Therefore by \cref{CanonicalInternal}, \(e\) is an internal ultrapower embedding of \(N\). But \(N = (M_{\delta'})^M\), so if \(j_{\delta'}^M\restriction N\) is amenable to \(N\), then relative to \(M\), \((M_{\delta'})^M\) is closed under \(\alpha\)-sequences for all ordinals \(\alpha\). In other words, \(N = M\) and so \(j_{\delta'}^M\) is the identity. But then once again \(k : P \to M\) is an internal ultrapower embedding of \(P\) and \(i =j_{\delta'}^M\circ i = k\circ d\).
\end{proof}

Now \(d \D i\), but \(d = j_D\) for some \(D\in \Un_{<\delta}\) and \(i\) is \({<}\delta\)-irreducible, so \(d\) is the identity. Therefore \(i_* = i'\circ d = i'\). Thus \(\textsc{crt}(i_*) = \textsc{crt}(i') > d(\delta) = \delta\). Since \(i_* : M_\delta\to N\) is an ultrapower embedding with critical point above \(\delta\), \(N\) is closed under \(\delta\)-sequences in \(M_\delta\). By \cref{LeastSuper}, \(M_\delta\) is itself closed under \(\delta\)-sequences in \(V\), so it follows that \(N\) is truly closed under \(\delta\)-sequences. But \(N\) is an internal ultrapower of \(M\), so \(N\subseteq M\). Hence \(\text{Ord}^\delta\subseteq M\), so \(i[\delta]\in M\). Since \(i : V\to M\) is an ultrapower embedding and \(i[\delta]\in M\), \(M^\delta\subseteq M\), as desired. 
\end{proof}

\section{Supercompactness}
In this section, we use the Irreducibility Theorem to prove the main theorem of this paper:
\begin{thm}[UA]\label{MenasGlobal}
If \(\kappa\) is a cardinal, the following are equivalent:
\begin{enumerate}[(1)]
\item \(\kappa\) is strongly compact.
\item \(\kappa\) is supercompact or a measurable limit of supercompact cardinals.
\end{enumerate}
\end{thm}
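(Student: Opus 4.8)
The implication $(2)\Rightarrow(1)$ is classical and needs no appeal to UA, so I would dispose of it first. If $\kappa$ is supercompact it is strongly compact. If $\kappa$ is a measurable limit of supercompact cardinals, then since every supercompact cardinal is strongly compact, $\kappa$ is a measurable limit of strongly compact cardinals, and Menas's theorem that a measurable limit of strongly compact cardinals is strongly compact closes this direction.

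For $(1)\Rightarrow(2)$ assume $\kappa$ is strongly compact, hence measurable. It therefore suffices to prove that if $\kappa$ is \emph{not} a limit of supercompact cardinals then $\kappa$ is supercompact. By \cref{SupercompactnessEquiv} it is enough to produce, for each $\lambda\geq\kappa$, an ultrapower embedding $j:V\to M$ with $\textsc{crt}(j)=\kappa$ and $M^\lambda\subseteq M$. Fixing such a $\lambda$ and setting $\delta=\lambda^+$ — a uniform successor cardinal, since $\delta>\kappa$ and $\kappa$ is strongly compact — the plan is to exhibit a $<\delta$-irreducible ultrapower embedding $i:V\to M$ with $\textsc{crt}(i)=\kappa$. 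The Irreducibility Theorem (\cref{IrredThm}) then yields $M^\delta\subseteq M$, and together with $\textsc{crt}(i)=\kappa$ this is exactly a witness that $\kappa$ is $\lambda$-supercompact; doing this for all $\lambda$ gives supercompactness.

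To build such an embedding I would start from the $\delta$-strong compactness of $\kappa$, which by \cref{StrongcompactnessEquiv} supplies a $\kappa$-complete uniform ultrafilter on $\delta$, that is, an ultrapower embedding with critical point $\kappa$ that is $\delta$-decomposable. Since every Rudin--Frolik predecessor of a $\kappa$-complete ultrafilter is again $\kappa$-complete, all of its irreducible factors have critical point at least $\kappa$. The next step is to pass to an irreducible witness: using the factorization results \cref{SigmaSilver} and \cref{DeltaFactor} — themselves consequences of Silver's theorem and the finiteness of Rudin--Frolik predecessors — one peels off the factors lying in $\Un_{<\delta}$ and is left with a $<\delta$-irreducible factor $i:V\to M$ that still ``reaches'' $\delta$. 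By \cref{IrredThm} this $M$ is closed under $\delta$-sequences, so $i$ witnesses that $\kappa^\ast=\textsc{crt}(i)$ is $\lambda$-supercompact.

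The crux, and the step I expect to be the main obstacle, is to pin down $\kappa^\ast=\kappa$: one must locate an irreducible factor that both reaches $\delta$ and has critical point \emph{exactly} $\kappa$, rather than some larger or intervening cardinal emerging from the factorization. This is precisely where the hypothesis that $\kappa$ is not a limit of supercompacts has to be spent. The intended mechanism is that if, for cofinally many $\lambda$, the $\delta$-reaching irreducible factor had critical point other than $\kappa$, then a uniformity argument across $\lambda$ would stabilize these factors to a single cardinal which, being $\lambda$-supercompact for cofinally many $\lambda$, is genuinely supercompact, and would place such supercompact cardinals cofinally below $\kappa$ — contradicting that the supercompacts below $\kappa$ are bounded. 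Converting the local datum ``a $<\delta$-irreducible, $\delta$-reaching factor'' into control of its critical point, uniformly in $\lambda$, is the technical heart of the argument; it draws on the structure theory of irreducible ultrafilters under UA together with \cref{LeastSuper} and the characterization in \cref{omega1Ketonen}. Once it is in place, the only surviving possibility is $\kappa^\ast=\kappa$ for all large $\lambda$, whence $\kappa$ is $\lambda$-supercompact for every $\lambda$ and thus supercompact.
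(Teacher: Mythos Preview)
Your reduction to the Irreducibility Theorem is correct in spirit, but the step you flag as the ``crux'' is a genuine gap, and the mechanism you sketch for closing it does not work. Starting from a $\kappa$-complete uniform ultrafilter on $\delta$, every Rudin--Frolik factor is $\kappa$-complete, so the critical point $\kappa^\ast$ of any ${<}\delta$-irreducible factor you extract satisfies $\kappa^\ast\geq\kappa$, not $\kappa^\ast\leq\kappa$. Thus your proposed stabilization cannot produce supercompact cardinals \emph{below} $\kappa$; if $\kappa^\ast>\kappa$ you only learn that some cardinal above $\kappa$ is $\delta$-supercompact, which says nothing about $\kappa$ itself. Moreover, the factorization tools you cite (\cref{SigmaSilver}, \cref{DeltaFactor}) do not directly hand you a ${<}\delta$-irreducible ultrafilter on $V$ with prescribed critical point: \cref{DeltaFactor} is a statement about internal embeddings of $M_\delta$, and the obvious candidate $U^\kappa_\delta$ need not be irreducible precisely because $\kappa$ is measurable rather than a successor (the proof of \cref{SuccIrred} uses $j_D(\nu)=\nu$, which fails at $\kappa$).

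The paper circumvents this by a different device. It first proves the local statement (\cref{MenasLocal}): for successor $\delta$, if $\kappa$ is $\delta$-strongly compact then $\kappa$ is $\delta$-supercompact or a measurable limit of $\delta$-supercompacts. The key trick is to pass to $M_D$ where $D=U^\kappa_\kappa$ is the Mitchell-order-zero normal ultrafilter on $\kappa$. By \cref{NormalTrans}, $t_D(U^\kappa_\delta)=(U^{\kappa^+}_{j_D(\delta)})^{M_D}$, and since $\kappa^+$ \emph{is} a successor, \cref{SuccIrred} applies in $M_D$ to make this ultrafilter irreducible; \cref{IrredThm} then yields a $j_D(\delta)$-supercompact cardinal $\kappa'\in[\kappa,j_D(\kappa)]$ in $M_D$, and a standard reflection (via normality of $D$) gives the dichotomy. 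The global theorem then follows by the easy observation that the sets $A_\delta=\{\alpha<\kappa:\alpha\text{ is }\delta\text{-supercompact}\}$ are decreasing in $\delta$ and hence eventually constant, so either $\kappa$ is supercompact or the supercompacts below $\kappa$ are unbounded. The passage to $M_D$ is exactly what converts the intractable requirement ``critical point equal to $\kappa$'' into the tractable ``critical point at least $\kappa^+$,'' and this is the idea missing from your outline.
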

This is achieved by proving a more local result.
\subsection{Level-by-level equivalence at successor cardinals}
The title of this subsection comes from the following result:
\begin{thm}[UA]\label{MenasLocal}
If \(\kappa\) is a cardinal and \(\delta\) is a successor cardinal, then the following are equivalent:
\begin{enumerate}[(1)]
\item \(\kappa\) is \(\delta\)-strongly compact.
\item \(\kappa\) is \(\delta\)-supercompact or a measurable limit of \(\delta\)-supercompact cardinals.
\end{enumerate}
\end{thm}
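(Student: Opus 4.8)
The easy direction is (2) $\Rightarrow$ (1), and it does not use UA. If $\kappa$ is $\delta$-supercompact then a normal fine $\kappa$-complete ultrafilter on $P_\kappa(\delta)$ (\cref{SupercompactnessEquiv}) is in particular a fine $\kappa$-complete ultrafilter on $P_\kappa(\delta)$, so $\kappa$ is $\delta$-strongly compact by \cref{StrongcompactnessEquiv}. If instead $\kappa$ is a measurable limit of $\delta$-supercompact cardinals, then it is a measurable limit of $\delta$-strongly compact cardinals, and Menas's theorem \cite{Menas} that a measurable limit of $\delta$-strongly compact cardinals is $\delta$-strongly compact finishes the job. So the entire content lies in the direction (1) $\Rightarrow$ (2).

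For (1) $\Rightarrow$ (2), first note that since $\kappa\leq\delta$ is $\delta$-strongly compact, every regular cardinal in $[\kappa,\delta]$ carries a $\kappa$-complete uniform ultrafilter (\cref{StrongcompactnessEquiv}(4)); in particular $\delta$ is a uniform successor cardinal, so the Irreducibility Theorem \cref{IrredThm} is available. The plan is to feed \cref{IrredThm} an ultrapower embedding with the right critical point: if $i : V\to M$ is any $<\delta$-irreducible ultrapower embedding with $\textsc{crt}(i)=\kappa$, then \cref{IrredThm} gives $M^\delta\subseteq M$, and since $\kappa\leq\delta$ this means precisely that $i$ witnesses the $\delta$-supercompactness of $\kappa$ (\cref{SupercompactnessEquiv}). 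Thus it suffices to prove the dichotomy: \emph{either} there is a $<\delta$-irreducible ultrapower embedding with critical point $\kappa$, \emph{or} $\kappa$ is a measurable limit of $\delta$-supercompact cardinals.

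To set up the dichotomy I would begin with a $\kappa$-complete uniform ultrafilter $U$ on $\delta$ with $\textsc{crt}(j_U)=\kappa$ (obtained from a fine $\kappa$-complete ultrafilter on $P_\kappa(\delta)$), chosen $\sE$-minimal among all such, and study its nonprincipal Rudin--Frolik predecessors $D\D U$ with $D\in\Un_{<\delta}$. If there are none, then $U$ is $<\delta$-irreducible and $j_U$ is the desired embedding. If there is such a $D$, then from $j_U=k\circ j_D$ with $k=j^{M_D}_{U/D}$ internal we get $\textsc{crt}(j_D)\geq\kappa$, and the relevant feature is that some internal ultrapower in this factorization has critical point $\kappa$, so $\kappa$ is measurable in $M_D$ and the ultrafilter $U/D$ (analyzed via \cref{Pushdown}) concentrates below $j_D(\kappa)$. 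Running the Irreducibility Theorem inside $M_D$ on the irreducible factors of $U/D$ should yield $\delta$-supercompactness at the appropriate level below $j_D(\kappa)$ on a $D$-large set; since $D$ is a nonprincipal $<\delta$-ultrafilter, \L o\'s's theorem then transfers this back to $\delta$-supercompact cardinals cofinal in $\kappa$. As $\kappa$ is measurable (being $\delta$-strongly compact it is $\kappa$-strongly compact, hence measurable), this puts us in the second disjunct of (2).

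The main obstacle is exactly this limit case: turning an unavoidable $<\delta$ Rudin--Frolik predecessor into genuine $\delta$-supercompact cardinals cofinal in $\kappa$. The difficulty is that the irreducible factor of $U$ responsible for moving $\kappa$ is an ultrapower of an intermediate model rather than of $V$, so the Irreducibility Theorem cannot be applied to it directly; one must instead run \cref{IrredThm} inside $M_D$ (where the factorization \cref{DeltaFactor} and the analysis of $U/D$ are the natural tools) and then reflect the conclusion back to $V$ along $D$, keeping careful track of the level shift between $j_D(\delta)$ in $M_D$ and $\delta$ in $V$. Verifying that this transfer produces supercompactness \emph{cofinally} below $\kappa$ — and not merely strong compactness or a single supercompact — together with the bookkeeping that reduces the general predecessor structure to the critical case via the $\sE$-minimality of $U$, is the step I expect to require the most care.
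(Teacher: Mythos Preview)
Your overall strategy---apply \cref{IrredThm}, if necessary inside an ultrapower, and reflect back---is correct, and your diagnosis of where the difficulty lies is accurate. But the second branch of your dichotomy is where all the content is, and your sketch there does not yet constitute a proof. You take $D$ to be an \emph{arbitrary} nonprincipal Rudin--Frolik predecessor of $U$ in $\Un_{<\delta}$ and hope that ``irreducible factors of $U/D$'' inside $M_D$ produce a supercompact cardinal in the right interval. You have no mechanism for locating such a factor with critical point in $[\kappa,j_D(\kappa))$, and since your $D$ is not assumed to be a normal ultrafilter on $\kappa$, there is no reason reflection along it should yield $\delta$-supercompacts \emph{cofinal} in $\kappa$ rather than, say, a single one.

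The paper avoids the case split entirely by making a specific choice of $D$ from the outset: take $U = U^\kappa_\delta$ and let $D = U^\kappa_\kappa$, the normal ultrafilter on $\kappa$ of Mitchell order zero (so $[\text{id}]_D = \kappa$ and reflection along $D$ is transparent). The two lemmas you are missing are:
\begin{itemize}
\item (\cref{SuccIrred}) If $\nu$ is a \emph{successor} cardinal then $U^\nu_\delta$ is outright irreducible. The proof uses \cref{Pushdown} together with the fact that $j_D(\nu)=\nu$ for any $D$ on an ordinal below $\delta$.
\item (\cref{NormalTrans}) In $M_D$, the translation $\tr D U$ equals $(U^{\kappa^+}_{j_D(\delta)})^{M_D}$.
\end{itemize}
Combining these, $\tr D U$ is irreducible in $M_D$, so \cref{IrredThm} applied there makes $\kappa' = \textsc{crt}(\tr D U)$ a $j_D(\delta)$-supercompact cardinal of $M_D$; and $\tr D U \E^{M_D} j_D(U)$ (\cref{BoundingLemma}) gives $\kappa \leq \kappa' \leq j_D(\kappa)$. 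Since $\kappa = [\text{id}]_D$ and $D$ is normal on $\kappa$, {\L}o\'s's theorem yields immediately that either $\kappa$ is $\delta$-supercompact or the $\delta$-supercompacts are unbounded in $\kappa$.

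The idea you were circling but did not reach is this: $\kappa$ being a limit cardinal is exactly what can obstruct the irreducibility of $U^\kappa_\delta$ in $V$, and passing to $M_D$ via the normal ultrafilter replaces the completeness threshold $\kappa$ by the \emph{successor} $\kappa^+$ (this is the content of \cref{NormalTrans}), at which point \cref{SuccIrred} applies directly. No factorization analysis of $U$ itself is needed.
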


\cref{MenasLocal} easily implies the main theorem of the paper:

\begin{proof}[Proof of \cref{MenasGlobal} from \cref{MenasLocal}]
The fact that (2) implies (1) is due to Menas \cite{Menas} and does not require UA.

We now prove the converse. Suppose \(\kappa\) is strongly compact. If \(\kappa\) is supercompact we are done, so suppose \(\kappa\) is not supercompact. For each successor cardinal \(\delta\), let \(A_\delta\subseteq \kappa\) be the set of \(\delta\)-supercompact cardinals less than \(\kappa\). By \cref{MenasLocal}, \(A_\delta\) is unbounded in \(\kappa\) for all successor cardinals \(\delta\). Moreover, if \(\delta_0 \leq \delta_1\), \(A_{\delta_0}\supseteq A_{\delta_1}\). Therefore there is some set \(A\subseteq \kappa\) such that for all sufficiently large \(\delta\), \(A_\delta = A\). It follows that \(A\) is the set of supercompact cardinals below \(\kappa\). But since \(A = A_\delta\) for some \(\delta\), \(A\) is unbounded in \(\kappa\). Therefore \(\kappa\) is a limit of supercompact cardinals, as desired.
\end{proof}

\cref{MenasLocal} is proved by analyzing the following ultrafilters using \cref{IrredThm}.

\begin{defn}[UA]
If \(\delta\) is a regular cardinal and \(\nu \leq \delta\) is a cardinal, then \(U^\nu_\delta\) denotes the \(\sE\)-least \(\nu\)-complete uniform ultrafilter on \(\delta\) if it exists.
\end{defn}

This analysis requires two simple lemmas:
\begin{lma}[UA]\label{SuccIrred}
Suppose \(\nu\) is a successor cardinal and \(\delta > \nu\) is a regular cardinal. Then \(U^{\nu}_\delta\) is irreducible if it exists.
\begin{proof}
Suppose \(D\D U\), and we will show that either \(D\) is isomorphic to \(U\) or \(D\) is principal. By replacing \(D\) with an isomorphic ultrafilter, we may assume \(D\in \Un\) and \(D\E U\). Given this, we will show that either \(D = U\) or \(D\) is principal.  

\begin{case} \(\textsc{sp}(D) = \delta\)\end{case} 
Then \(D\) is a \(\nu\)-complete ultrafilter \(U\) on \(\delta\), so \(U\E D\). Therefore \(U = D\).
\begin{case} \(\textsc{sp}(D) < \delta\).\end{case} 
Then since \(\delta\) is regular, \(j_D(\delta) = \sup j_D[\delta]\). It follows that \(\textsc{sp}(U/D) = j_D(\delta)\) since \(\textsc{sp}(U/D)\) is derived from the internal ultrapower embedding \(k : M_D\to M_U\) using \([\text{id}]_U\), and by the uniformity of \(U\), \(\sup j_D[\delta]\) is the least possible ordinal mapped above \([\text{id}]_U\) by \(k\). Moreover \[\textsc{crt}(U/D) \geq \textsc{crt}(U) \geq \nu\] so \(U/D\) is \(\nu\)-complete in \(M_D\). But also, and this is a key point, \[j_D(\nu) \leq j_U(\nu) = \nu\] since \(\nu\) is a successor cardinal. Therefore \(M_D\) satisfies that \(U/D\) is a uniform \(j_D(\nu)\)-complete ultrafilter on \(j_D(\delta)\). Therefore \(j_D(U)\E U/D\) in \(M_D\). By \cref{Pushdown}, it follows that \(D\) is principal.
\end{proof}
\end{lma}

Our second lemma shows that the requirement that \(\nu\) is a successor cardinal is necessary above.

\begin{lma}[UA]\label{NormalTrans}
Suppose \(\kappa\) is a measurable cardinal and \(\delta > \kappa\) is a uniform regular cardinal. Let \(U = U^\kappa_\delta\) and let \(D = U^\kappa_\kappa\). Then in \(M_D\), \(\tr D U = U^{\kappa^+}_{j_D(\delta)}\).
\begin{proof}
Note that \(D = U^\kappa_\kappa\) is the unique normal ultrafilter on \(\kappa\) of Mitchell order zero.

Suppose that in \(M_D\), \(Z\) is a \(\kappa^+\)-complete uniform ultrafilter on \(j_D(\delta)\). Then \(D^-(Z)\) is a \(\kappa\)-complete uniform ultrafilter on \(\delta\), so \(U \E D^-(Z)\). Hence by \cref{OrderPreserving}, \(M_D\) satisfies \[\tr D U \E \tr D {D^-(Z)} \E Z\]
Thus  in \(M_D\), \(\tr D U\) lies \(\E\)-below every \(\kappa^+\)-complete uniform ultrafilter on \(j_D(\delta)\), so to show \(\tr D U = U^{\kappa^+}_{j_D(\delta)}\), it suffices to show that \(\tr D U\) is  \(\kappa^+\)-complete.

Since \(D\) is a normal ultrafilter, either \(D\D U\) or \(D\mo U\). 
If \(D\mo U\) then \(\tr D U = j_D(U)\) so \(\tr D U\) is \(j_D(\kappa)\)-complete; since \(\kappa^+\leq j_D(\kappa)\), we are done. 

If \(D\D U\), then obviously \(\tr D U = U/D\) is \(\kappa\)-complete. Since \(\kappa\) is not measurable in \(M_D\), \(\tr D U\) is \(\kappa^+\)-complete, as desired.
\end{proof}
\end{lma}

\begin{proof}[Proof of \cref{MenasLocal}]
The fact that (2) implies (1) is due to Menas \cite{Menas} and does not require UA.

We now prove the converse. Let \(U = U^{\delta}_\kappa\) and let \(D = U^\kappa_\kappa\). Working in \(M_D\), let \(Z = \tr D U\). By \cref{NormalTrans}, \(Z = U^{\kappa^+}_{j_D(\delta)}\) so by \cref{SuccIrred}, \(Z\) is an irreducible \(\kappa^+\)-complete uniform ultrafilter on \(j_D(\delta)\). Let \(\kappa' = \textsc{crt}(Z)\). Then by \cref{IrredThm}, \(Z\) witnesses that \(\kappa'\) is \(j_D(\delta)\)-supercompact in \(M_D\). Moreover since \(Z \E j_D(U) = U^{j_D(\kappa)}_{j_D(\delta)}\) by \cref{BoundingLemma}, \(\kappa' \leq \textsc{crt}(j_D(U))\) with equality if and only if \(Z = j_D(U)\). Thus in \(M_D\) there is a \(j_D(\delta)\)-supercompact cardinal in the interval \([\kappa, j_D(\kappa)]\). By a standard reflection argument, either \(\kappa\) is supercompact or \(\kappa\) is a measurable limit of supercompact cardinals.
\end{proof}

\subsection{Level-by-level equivalence at singular cardinals}
In this section, we tackle the question of the local equivalence of strong compactness and supercompactness at a singular cardinal \(\lambda\). This depends on the cofinality of \(\lambda\) in the following way.
\begin{lma}
Suppose \(\kappa < \lambda\) and \(\textnormal{cf}(\lambda) < \kappa\). Then \(\kappa\) is \(\lambda\)-strongly compact if and only if \(\kappa\) is \(\lambda^+\)-strongly compact, and  \(\kappa\) is \(\lambda\)-supercompact if and only if \(\kappa\) is \(\lambda^+\)-supercompact.
\end{lma}
Thus \cref{MenasLocal} implies the following fact:
\begin{thm}[UA]
Suppose \(\kappa < \lambda\) and \(\textnormal{cf}(\lambda) < \kappa\). Then \(\kappa\) is \(\lambda\)-strongly compact if and only if \(\kappa\) is \(\lambda\)-supercompact or a measurable limit of \(\lambda\)-supercompact cardinals.
\end{thm}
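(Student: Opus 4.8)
The plan is to reduce this singular case to the successor cardinal $\lambda^+$, where \cref{MenasLocal} already applies, and to pass back and forth between $\lambda$ and $\lambda^+$ using the preceding lemma. First I would invoke that lemma to transfer strong compactness upward: since $\kappa < \lambda$ and $\textnormal{cf}(\lambda) < \kappa$, it gives that $\kappa$ is $\lambda$-strongly compact if and only if $\kappa$ is $\lambda^+$-strongly compact. Because $\lambda^+$ is a successor cardinal, \cref{MenasLocal} then applies with $\delta = \lambda^+$, so $\kappa$ is $\lambda^+$-strongly compact if and only if $\kappa$ is $\lambda^+$-supercompact or a measurable limit of $\lambda^+$-supercompact cardinals.

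Next I would translate this conclusion back down from $\lambda^+$ to $\lambda$. For $\kappa$ itself this is immediate from the lemma, which gives that $\kappa$ is $\lambda^+$-supercompact if and only if $\kappa$ is $\lambda$-supercompact. The measurability of $\kappa$ is a property of $\kappa$ alone and is unaffected by the change from $\lambda^+$ to $\lambda$, so the only clause requiring attention is ``measurable limit of supercompact cardinals.'' Here I would compare the set $A$ of $\lambda$-supercompact cardinals below $\kappa$ with the set $B$ of $\lambda^+$-supercompact cardinals below $\kappa$. Applying the lemma to each cardinal $\bar\kappa$ in the interval $(\textnormal{cf}(\lambda),\kappa)$—which is legitimate since such a $\bar\kappa$ satisfies $\bar\kappa < \kappa < \lambda$ and $\textnormal{cf}(\lambda) < \bar\kappa$—shows that $A$ and $B$ coincide above $\textnormal{cf}(\lambda)$. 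Since $\textnormal{cf}(\lambda) < \kappa$, unboundedness in $\kappa$ is a tail property, so $A$ is unbounded in $\kappa$ if and only if $B$ is; hence $\kappa$ is a measurable limit of $\lambda$-supercompact cardinals exactly when it is a measurable limit of $\lambda^+$-supercompact cardinals. Chaining the three equivalences yields the theorem.

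Since the real content is carried entirely by \cref{MenasLocal} and the preceding lemma, there is no serious obstacle; the statement is genuinely a corollary. The one place that demands care is the limit clause: a priori $\lambda$- and $\lambda^+$-supercompactness could differ for small cardinals $\bar\kappa \leq \textnormal{cf}(\lambda)$, so I must confirm that restricting attention to the tail interval $(\textnormal{cf}(\lambda),\kappa)$ is harmless, which is exactly where the hypothesis $\textnormal{cf}(\lambda) < \kappa$ is used. I therefore expect the writeup to consist mainly of this bookkeeping argument about unboundedness, with the two cited results supplying everything substantive.
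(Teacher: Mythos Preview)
Your proposal is correct and follows exactly the approach the paper intends: the paper presents this theorem as an immediate consequence of the preceding lemma and \cref{MenasLocal}, without even writing out a proof. Your careful handling of the limit clause (restricting to the tail interval $(\textnormal{cf}(\lambda),\kappa)$ where the lemma applies) is the only bookkeeping needed, and you have done it correctly.
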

When \(\lambda\) is singular of large cofinality, equivalence provably fails. For example, we have the following fact:

\begin{cor}
Let \((\kappa_0,\lambda_0)\) be the lexicographically least pair \((\kappa, \lambda)\) such that \(\kappa < \lambda\), \(\kappa\) is \(\lambda\)-strongly compact, and \(\lambda\) is a strong limit cardinal of cofinality at least \(\kappa\). Then \(\kappa_0\) is \(\lambda_0\)-strongly compact but not \(\lambda_0\)-supercompact.\qed
\end{cor}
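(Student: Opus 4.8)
The plan is a reflection argument in the style of Menas. Since the pair $(\kappa_0,\lambda_0)$ is chosen so that $\kappa_0$ is $\lambda_0$-strongly compact, only the failure of $\lambda_0$-supercompactness requires proof. So I would assume towards a contradiction that $\kappa_0$ is $\lambda_0$-supercompact and fix a normal fine ultrafilter on $P_{\kappa_0}(\lambda_0)$ with ultrapower $j:V\to M$, so that $\textsc{crt}(j)=\kappa_0$, $j[\lambda_0]\in M$, and $M$ is closed under $\lambda_0$-sequences. Write $P(\kappa,\lambda)$ for the property ``$\kappa<\lambda$, $\kappa$ is $\lambda$-strongly compact, and $\lambda$ is a strong limit cardinal of cofinality at least $\kappa$.'' By elementarity, $(j(\kappa_0),j(\lambda_0))$ is the lexicographically least pair satisfying $P$ in $M$. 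The strategy is to show that $M$ already satisfies $P(\kappa_0,\lambda_0)$; since $\kappa_0<j(\kappa_0)$, this pair is lexicographically strictly below $(j(\kappa_0),j(\lambda_0))$, contradicting minimality in $M$.

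Before running the reflection I would first record that $\lambda_0$ is singular. If $\lambda_0$ were regular it would be inaccessible, and then there would be a strong limit cardinal $\lambda'$ of cofinality $\kappa_0$ with $\kappa_0<\lambda'<\lambda_0$ (such cardinals are cofinal below any inaccessible). Since $\lambda_0$-strong compactness implies $\lambda'$-strong compactness, e.g.\ by the downward monotonicity built into \cref{StrongcompactnessEquiv}(4), the pair $(\kappa_0,\lambda')$ would satisfy $P$ and be lexicographically below $(\kappa_0,\lambda_0)$, contradicting minimality. Hence $\lambda_0$ is singular of cofinality in the interval $[\kappa_0,\lambda_0)$.

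The heart of the matter is checking that $M\models P(\kappa_0,\lambda_0)$, and in particular that $\kappa_0$ is $\lambda_0$-strongly compact in $M$. The two easy clauses follow from closure: for $\gamma<\lambda_0$, the closure of $M$ under $\lambda_0$-sequences gives $P(\gamma)\subseteq M$ together with a bijection witnessing $2^\gamma<\lambda_0$ inside $M$, so $\lambda_0$ remains a strong limit in $M$; and $\textnormal{cf}^M(\lambda_0)\geq \textnormal{cf}^V(\lambda_0)\geq\kappa_0$ since $M\subseteq V$. For strong compactness I would apply the criterion \cref{StrongcompactnessEquiv}(4) inside $M$: it suffices to produce, for each $M$-regular $\delta$ with $\kappa_0\leq\delta\leq\lambda_0$, a $\kappa_0$-complete uniform ultrafilter on $\delta$ lying in $M$. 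Because $\lambda_0$ is singular, every such $\delta$ satisfies $\delta<\lambda_0$, hence $2^\delta<\lambda_0$ as $\lambda_0$ is a strong limit. In $V$, the $\lambda_0$-supercompactness of $\kappa_0$ gives a $\kappa_0$-complete uniform ultrafilter $W_\delta$ on $\delta$; since $W_\delta\subseteq P(\delta)$ has cardinality at most $2^\delta<\lambda_0$ and consists of sets already in $M$, the closure of $M$ absorbs it, so $W_\delta\in M$. As $\kappa_0$-completeness and uniformity are computed the same way in $M$ (all relevant sequences of length $<\kappa_0$ lie in $M$), $W_\delta$ witnesses the required instance of clause (4) in $M$.

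With $M\models P(\kappa_0,\lambda_0)$ in hand the contradiction is immediate: $\kappa_0<j(\kappa_0)$ exhibits a pair satisfying $P$ in $M$ whose first coordinate lies strictly below that of the purportedly lexicographically least such pair $(j(\kappa_0),j(\lambda_0))$. Hence $\kappa_0$ is not $\lambda_0$-supercompact. I expect the main obstacle to be precisely the transfer of $\lambda_0$-strong compactness to $M$: unlike supercompactness, strong compactness is not automatically inherited by inner models, and the argument genuinely exploits both that $\lambda_0$ is singular (so that no ultrafilter on $\lambda_0$ itself is needed in clause (4)) and that $\lambda_0$ is a strong limit (so that each witness $W_\delta$ is small enough to be absorbed by $M$). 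Establishing the singularity of $\lambda_0$ at the outset is therefore not a mere technicality but the step that makes the entire reflection go through.
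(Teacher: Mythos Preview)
Your argument is correct and is precisely the standard Menas-style reflection argument; the paper itself omits the proof entirely (the corollary is stated with a \qed and a pointer to \cite{Apter}), so there is nothing to compare against beyond noting that your write-up supplies exactly the expected reasoning.
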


For more on this, see \cite{Apter}. The question we consider here is whether there is a notion of strong compactness (i.e., a filter extension property) at singular cardinals for which level-by-level equivalence with supercompactness holds (in the sense that Menas's theorem can be reversed).  

\begin{thm}[UA]\label{LBL}
Suppose \(\kappa  < \lambda\) and \(\lambda\) is singular. Then the following are equivalent:
\begin{enumerate}[(1)]
\item There is a \(\kappa\)-complete ultrafilter on \(P_\kappa(\lambda)\) extending the club filter.
\item \(\kappa\) is \(\lambda\)-supercompact or a measurable limit of \(\lambda\)-supercompact cardinals.
\end{enumerate}
\end{thm}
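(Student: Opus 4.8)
The plan is to run the argument of \cref{MenasLocal}, using the club filter extension as the substitute for the normality and regularity that were available at successor cardinals.

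The implication $(2)\Rightarrow(1)$ is due to Menas and uses no instance of UA. If $\kappa$ is $\lambda$-supercompact, then by \cref{SupercompactnessEquiv} there is a normal fine $\kappa$-complete ultrafilter on $P_\kappa(\lambda)$, and every normal fine filter refines the club filter, giving (1). If $\kappa$ is a measurable limit of $\lambda$-supercompact cardinals, one forms the Menas sum of supercompactness measures along a normal measure on $\kappa$; since a club reflects to a club for measure-one many of the $\lambda$-supercompact cardinals below $\kappa$, the resulting $\kappa$-complete ultrafilter on $P_\kappa(\lambda)$ extends the club filter.

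For the converse, observe first that an ultrafilter as in (1) is in particular $\kappa$-complete and fine, so $\kappa$ is $\lambda$-strongly compact by \cref{StrongcompactnessEquiv}. When $\mathrm{cf}(\lambda)<\kappa$, the preceding theorem (the small-cofinality case) already yields (2), so I would assume $\mathrm{cf}(\lambda)\geq\kappa$; this is exactly the regime in which bare fineness does not suffice and the club extension becomes essential. Fix a club-extending $\kappa$-complete ultrafilter $\mathcal W$ on $P_\kappa(\lambda)$ that is minimal in the relevant order, and let $D=U^\kappa_\kappa$ be the normal measure on $\kappa$ of Mitchell order zero. Passing into $M_D$, the plan is to form the translation $Z=\tr D{\mathcal W}$ and to establish the analog of \cref{NormalTrans}: using \cref{NormalGeneration} and its corollary to identify $j_D(\mathcal W)$ as the unique fine $M_D$-ultrafilter extending $j_D[\mathcal W]$, one shows that $Z$ is a club-extending ultrafilter of $M_D$ on (the $P$-space of) $j_D(\lambda)$ whose completeness has been raised from $\kappa$ to the successor cardinal $\kappa^+$.

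The decisive feature is that $\kappa^+$ is a successor cardinal. An adaptation of \cref{SuccIrred}, driven by the pushdown inequality of \cref{Pushdown} together with the fixed-point equation $j_{D'}(\kappa^+)=\kappa^+$ for any factor $D'\D Z$, then shows that $Z$ is irreducible. At this point the Irreducibility Theorem applies --- \cref{IrredThm} together with its strong limit singular form --- to conclude that $\textsc{crt}(Z)$ is $j_D(\lambda)$-supercompact in $M_D$, while the trivial bound \cref{BoundingLemma} locates $\textsc{crt}(Z)$ in the interval $[\kappa,j_D(\kappa)]$. A standard reflection through $j_D$ then gives that $\kappa$ is $\lambda$-supercompact or a measurable limit of $\lambda$-supercompact cardinals, as desired. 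The main obstacle is twofold. First, one must place $\mathcal W$ into the translation framework at a singular $\lambda$ --- which calls for a Solovay-type identification (generalizing \cref{Solovay}) of $\mathcal W$ with a uniform ultrafilter on an ordinal --- and then control its completeness and fineness through $\tr D{\cdot}$; this is the analog of \cref{NormalTrans}, and it is precisely here that the club extension, rather than mere fineness, is what survives the translation. Second, one must invoke the Irreducibility Theorem at the singular cardinal $j_D(\lambda)$, which, when $j_D(\lambda)$ fails to be a strong limit, requires reducing to its strong limit singular form rather than to the successor form \cref{IrredThm}.
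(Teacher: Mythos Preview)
Your overall strategy---reduce to an irreducible ultrafilter and invoke the Irreducibility Theorem---matches the paper, but your execution detours through the normal measure \(D=U^\kappa_\kappa\) and translation, mimicking \cref{MenasLocal}. The paper takes a more direct route that sidesteps the two obstacles you flag at the end.

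The paper observes that the filter \(\mathcal F\) on \(P(\lambda)\) generated by the club filter is normal and fine, and then proves a general lemma (\cref{IrredExt}): for any normal fine filter \(\mathcal F\) on \(P(\lambda)\) and any successor cardinal \(\nu<\lambda\), the \(\swo\)-least \(\nu\)-complete ultrafilter isomorphic to an extension of \(\mathcal F\) is already \({<}\lambda\)-irreducible. The proof of \cref{IrredExt} is the direct analog of \cref{SuccIrred}, but with the key new ingredient being \cref{Club2} (supported by \cref{ClubLemma} and \cref{NormalGeneration}): if \(D\sD\mathcal U\) with \(\mathcal U\) extending \(\mathcal F\), then \(\mathcal U/D\) extends \(j_D(\mathcal F)\). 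This is exactly the ``club extension survives the quotient'' fact you would need inside \(M_D\), but the paper uses it in \(V\) to run the \cref{Pushdown} argument without ever invoking \(t_D\) or passing to \(M_D\). The dichotomy in (2) then arises by asking whether, for some successor \(\nu<\kappa\), this least \(\nu\)-complete extension is already \(\kappa\)-complete; if so \(\kappa\) is \(\lambda\)-supercompact, and if not the critical points accumulate to \(\kappa\).

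This buys two things you struggled with. First, no Solovay-type identification at the singular \(\lambda\) is needed: \cref{IrredExt} works with the normal fine filter directly and never requires converting \(\mathcal W\) into a uniform ultrafilter on an ordinal. Second, and more importantly, \cref{IrredThm} is only applied at successor cardinals \(\delta<\lambda\): since the ultrafilter is \({<}\lambda\)-irreducible, it is \({<}\delta\)-irreducible for every such \(\delta\), giving \({<}\lambda\)-closure of \(M_U\); singularity of \(\lambda\) then yields \(\lambda\)-closure for free. So the ``strong limit singular form'' of the Irreducibility Theorem is never invoked, and your worry about \(j_D(\lambda)\) failing to be a strong limit evaporates. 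Your route may be salvageable, but the translation-free argument via \cref{IrredExt} is both shorter and avoids the technical holes you correctly identified.
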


To prove this we use a lemma that appears as \cite{GCH} Lemma 3.3:

\begin{lma}\label{ClubLemma}
Suppose \(U\) is a countably complete ultrafilter, \(\lambda\) is a cardinal, and \(j_U[\lambda]\subseteq A\subseteq j_U(\lambda)\) has the property that \(j_U(f)[A]\subseteq A\) for every \(f : \lambda\to \lambda\). Suppose \(D\) is a countably complete ultrafilter on an ordinal \(\gamma < \lambda\). Suppose \((k,i) : (M_D,M_U)\to N\), \(k\) is an internal ultrapower embedding, and \(k\circ j_D = i\circ j_U\). Then \(k([\textnormal{id}]_D) \in i(A)\).
\end{lma}

\cref{ClubLemma} comes into the picture through the following lemma.
\begin{lma}\label{Club2}
Suppose \(\mathcal U\) extends a normal filter \(\mathcal F\) on \(P(\lambda)\) and \(D\) is a countably complete ultrafilter on \(\gamma < \lambda\) such that \(D\sD \mathcal U\). Let \(\mathcal U' = \mathcal U/D\). Then \(\mathcal U'\) extends \(j_D(\mathcal F)\).
\end{lma}

\begin{proof}
It is easy to see that \(j_D[\mathcal U]\subseteq \mathcal U'\). In particular \(j_D[\mathcal F]\subseteq \mathcal U'\).

By \cref{ClubLemma}, \(k([\text{id}]_D)\in [\text{id}]_\mathcal U\): note that \(j_\mathcal U(f)[ [\text{id}]_\mathcal U]\subseteq  [\text{id}]_\mathcal U\) for all \(f : \lambda\to \lambda\), since this merely says that \( [\text{id}]_\mathcal U\) belongs to \(j_D(C)\) where \(C\subseteq P(\lambda)\) is the club of closure points of \(f\). Letting \(S = \{A\in j_D(P(\lambda)) : [\text{id}]_D\in A\}\), we have \([\text{id}]_\mathcal U\in k(S)\) and so \(S\in \mathcal U'\). By \cref{NormalGeneration}, \(j_D(\mathcal F)\subseteq \mathcal U\).
\end{proof}

\begin{prp}[UA]\label{IrredExt}
For any normal fine filter \(\mathcal F\) on \(P(\lambda)\) and any successor cardinal \(\nu < \lambda\), the \(\swo\)-least \(\nu\)-complete ultrafilter \(U\) that is isomorphic to an extension of \(\mathcal F\) is \({<}\lambda\)-irreducible.
\begin{proof}
Suppose \(D\) is an ultrafilter on \(\gamma < \lambda\) and \(D\sD U\). Let \(U' = U/D\). To show \(D\) is principal, it suffices by \cref{Pushdown} to show that \(j_D(U)\E U'\). Note that \(U'\) is isomorphic to \(\mathcal U' = \mathcal U/D\). But by \cref{Club2}, \(\mathcal U'\) extends \(j_D(\mathcal F)\). Moreover \(U'\) is \(j_D(\nu)\)-complete since \(j_D(\nu) = \nu\) and \(\textsc{crt}(U') \geq \textsc{crt}(U) \geq \nu\). Therefore \(U'\) is a \(j_D(\nu)\)-complete extension of \(j_D(\mathcal F)\), and so  \(j_D(U)\E U'\), as desired.
\end{proof}
\end{prp}

\begin{proof}[Proof of \cref{LBL}]
The proof that (2) implies (1) follows the proof of Menas's theorem \cite{Menas} and does not require UA.

We now show (1) implies (2). Assume (1).

Note that the filter \(\mathcal F\) on \(P(\lambda)\) generated by the club filter on \(P_\kappa(\lambda)\) is normal. 

Suppose first that for some cardinal \(\nu < \kappa\), the least \(\nu\)-complete ultrafilter \(U\) that is isomorphic to an extension of \(\mathcal F\) is \(\kappa\)-complete. By replacing \(\nu\) with \(\nu^+\), we may assume without loss of generality that \(\nu\) is a successor cardinal (notice that \(\nu^+ < \kappa\)). Then \(U\) is \({<}\lambda\)-irreducible by \cref{IrredExt}. It follows from \cref{IrredThm} that \(U\) witnesses that \(\kappa\) is \({<}\lambda\)-supercompact. Since \(\lambda\) is singular, \(U\) witnesses that \(\kappa\) is \(\lambda\)-supercompact, so (2) holds.

Suppose instead that for each \(\nu < \kappa\), the least \(\nu\)-complete ultrafilter \(U\) that is isomorphic to an extension of \(\mathcal F\) is \(\kappa\)-complete. Then a similar argument shows that \(\kappa\) is a limit of \(\lambda\)-supercompact cardinals, so (2) holds.
\end{proof}

\subsection{Ultrafilters on inaccessible cardinals}\label{Anom}
In this short subsection we discuss the issues with establishing a version of \cref{MenasLocal} when \(\delta\) is an inaccessible cardinal. The first thing we show is that the only obstruction is the fact that we do not know how to analyze \(j_\delta: V \to M_\delta\) when \(\delta\) is inaccessible.
\begin{prp}[UA]\label{InaccCover}
Suppose \(\delta\) is a uniform inaccessible cardinal. Suppose \(i :V \to M \) is a \({<}\delta\)-irreducible ultrapower. Then \(\textnormal{Ord}^\delta\cap M_\delta\subseteq M\).
\begin{proof}
Let \(\kappa = \textsc{crt}(j_\delta)\). Then by \cite{Frechet} Theorem 7.8, \(\kappa\) is \(\delta\)-strongly compact. In particular, every regular cardinal in the interval \([\kappa,\delta]\) is uniform, so by \cref{IrredThm}, \(M_\delta\) is closed under \(\lambda\)-sequences for all \(\lambda < \delta\). Let \((i_*,j') : (M_\delta,M)\to N\) be the canonical comparison of \((j_\delta,i)\). By \cref{DeltaFactor}, \(i_*\) factors as \(i'\circ j_D\) where \(D\in \Un_{<\delta}\) and \(\textsc{crt}(i') > \delta\). Since \(V_\delta\subseteq M\), \(D\in M\). Therefore since \(M\) is closed under \(\textsc{sp}(D)\)-sequences, \(j_D\restriction M = j_D^M\) is amenable to \(M\). Therefore by \cref{CanonicalInternal}, \(j_D\restriction N\) is an internal ultrapower embedding of \(N\). Since \(N\subseteq M_D^{M_\delta}\), \(j_D\restriction\text{Ord}\) is amenable to \(M_D^{M_{\delta}}\), and it follows that \(D\) is principal. Hence \(\textsc{crt}(i_*) \geq \textsc{crt}(i') > \delta\). Thus \(\text{Ord}^\delta\cap M_\delta \subseteq N\subseteq M\), as desired.
\end{proof}
\end{prp}
\begin{cor}[UA]
Suppose \(\delta\) is a uniform inaccessible cardinal. The following are equivalent:
\begin{enumerate}[(1)]
\item \(M_\delta\) is closed under \(\delta\)-sequences.
\item Every \({<}\delta\)-irreducible ultrapower is closed under \(\delta\)-sequences.\qed
\end{enumerate}
\end{cor}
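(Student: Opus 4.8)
The plan is to reduce the whole corollary to the single fact that the Ketonen embedding $j_\delta$ is itself \({<}\delta\)-irreducible. First I would record the standard closure criterion: if $i:V\to M$ is an ultrapower embedding, then $M^\delta\subseteq M$ iff $i\restriction\delta\in M$ iff $\text{Ord}^\delta\subseteq M$ (the forward implications are trivial, and $\text{Ord}^\delta\subseteq M$ gives $i\restriction\delta\in M$ since $\langle i(\alpha):\alpha<\delta\rangle\in\text{Ord}^\delta$; this is the criterion already used in the proof of \cref{IrredThm}). Thus both (1) and (2) are assertions about which $\delta$-sequences of ordinals land in which inner models, and \cref{InaccCover} is precisely a comparison of these two classes of sequences.

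The key lemma is that $U_\delta$ is \({<}\delta\)-irreducible, and I would prove it for any uniform regular $\delta$ (inaccessibility is not needed here). Suppose toward a contradiction that $D\in\Un_{<\delta}$ is nonprincipal and $D\D U_\delta$, and let $k:M_D\to M_\delta$ be the internal ultrapower embedding with $j_\delta=k\circ j_D$. Since $\textsc{sp}(D)<\delta$ and $\delta$ is regular, $j_D$ is continuous at $\delta$, so $\sup j_D[\delta]=j_D(\delta)$. A short computation from $k\circ j_D=j_\delta$ then shows $\sup k[j_D(\delta)]=\sup j_\delta[\delta]=[\text{id}]_{U_\delta}$, and since $M_\delta=H^{M_\delta}(j_\delta[V]\cup\{[\text{id}]_{U_\delta}\})\subseteq H^{M_\delta}(k[M_D]\cup\{[\text{id}]_{U_\delta}\})$, in fact $M_\delta=H^{M_\delta}(k[M_D]\cup\{[\text{id}]_{U_\delta}\})$. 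As $[\text{id}]_{U_\delta}$ is not tail uniform in $M_\delta$, this says exactly that $k:M_D\to M_\delta$ is a Ketonen embedding at $j_D(\delta)$ (construed over $M_D$). Hence the derived ultrafilter $U_\delta/D$ is the Ketonen ultrafilter of $M_D$ at $j_D(\delta)$, which by \cref{KetonenUnique} applied in $M_D$ and elementarity of $j_D$ equals $j_D(U_\delta)$. But \cref{Pushdown} gives $j_D(U_\delta)\swo U_\delta/D$ strictly in $M_D$, so $j_D(U_\delta)\swo j_D(U_\delta)$, contradicting that the seed order is a strict wellorder. Therefore $D$ is principal, and $U_\delta$ is \({<}\delta\)-irreducible.

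Granting the key lemma, both directions are immediate. For (2) $\Rightarrow$ (1): since $\delta$ is uniform and inaccessible, $j_\delta:V\to M_\delta$ is a \({<}\delta\)-irreducible ultrapower, hence one of the embeddings quantified over in (2), so (2) yields directly that $M_\delta$ is closed under $\delta$-sequences. For (1) $\Rightarrow$ (2): let $i:V\to M$ be any \({<}\delta\)-irreducible ultrapower. By \cref{InaccCover}, $\text{Ord}^\delta\cap M_\delta\subseteq M$; by (1) and the reformulation, $\text{Ord}^\delta\subseteq M_\delta$, so $\text{Ord}^\delta\subseteq M$, and hence $M^\delta\subseteq M$ by the closure criterion.

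The main obstacle is the key lemma, and inside it the verification that $k$ is a Ketonen embedding: one must pin down the seed identity $\sup k[j_D(\delta)]=[\text{id}]_{U_\delta}$ together with the generation $M_\delta=H^{M_\delta}(k[M_D]\cup\{[\text{id}]_{U_\delta}\})$, and then correctly identify the resulting derived ultrafilter with $j_D(U_\delta)$ so as to trigger the \cref{Pushdown} contradiction. Everything else is the standard closure criterion together with a single application of \cref{InaccCover}; in particular the inaccessibility of $\delta$ enters only through \cref{InaccCover}, in the direction (1) $\Rightarrow$ (2).
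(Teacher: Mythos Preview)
Your argument is correct. The paper states the corollary with a terminal \qed\ and gives no proof, evidently regarding both directions as immediate from \cref{InaccCover}. The direction $(1)\Rightarrow(2)$ really is immediate from \cref{InaccCover} together with the closure criterion you record. For $(2)\Rightarrow(1)$ the only missing ingredient is that $j_\delta$ is itself ${<}\delta$-irreducible, and your proof of this key lemma---showing that if $D\in\Un_{<\delta}$ were a nonprincipal Rudin--Frolik predecessor of $U_\delta$ then $k:M_D\to M_\delta$ would be Ketonen at $j_D(\delta)$ in $M_D$, forcing $U_\delta/D=j_D(U_\delta)$ in contradiction with \cref{Pushdown}---is clean and correct, and is presumably what the author has in mind (the paper already tacitly uses the ${<}\delta$-irreducibility of $j_\delta$ in the proof of \cref{InaccCover} when it invokes \cref{IrredThm} to get ${<}\delta$-closure of $M_\delta$). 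Your remark that inaccessibility enters only through \cref{InaccCover} in the direction $(1)\Rightarrow(2)$ is also correct.
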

There seems to be no clear way forward, and this suggests a number of open questions:
\begin{qst}
Suppose \(\delta\) is strongly inaccessible. Can there be a countably complete ultrafilter \(U\) such that \(M = M_U\) has the following properties:
\begin{enumerate}[(1)]
\item \(M\) is closed under \(\lambda\)-sequences for all \(\lambda < \delta\).
\item \(M\) has the tight covering property at \(\delta\).
\item \(M\) is not closed under \(\delta\)-sequences.
\end{enumerate}
\end{qst}
Even assuming only ZFC, it is not clear that it is possible for such an ultrapower to exist. Regarding supercompactness at inaccessible cardinals, we do have the following intriguing fact:
\begin{thm}[UA]
Suppose \(\delta\) is a regular cardinal that carries distinct countably complete weakly normal ultrafilters. Then some \(\kappa < \delta\) is \(\delta\)-supercompact.\qed
\end{thm}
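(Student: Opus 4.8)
The plan is to peel off the easy cases and concentrate the work at strongly inaccessible \(\delta\). First, \(\delta\) must be uniform, since any nonprincipal countably complete weakly normal ultrafilter on \(\delta\) is uniform. If \(\delta\) is a successor cardinal, then \(\textsc{crt}(j_\delta)\) is \(\delta\)-strongly compact and \cref{LeastSuper} shows that \(\kappa=\textsc{crt}(j_\delta)\) is \(\delta\)-supercompact; since the critical point of an ultrapower embedding cannot be a successor cardinal, \(\kappa<\delta\), so we are done (indeed with a single weakly normal ultrafilter). The essential case, and the only one where a single weakly normal ultrafilter does not obviously suffice, is \(\delta\) strongly inaccessible, so assume this. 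Then \(\kappa=\textsc{crt}(j_\delta)\) is \(\delta\)-strongly compact by \cite{Frechet}, so \(M_\delta\) is closed under \(\gamma\)-sequences for every \(\gamma<\delta\) and, by \cref{TightCover}, has the tight covering property at \(\delta\). Thus the only missing ingredient for \(M_\delta^\delta\subseteq M_\delta\) is \(P(\delta)\subseteq M_\delta\): if \(\kappa<\delta\) this is exactly \(\delta\)-supercompactness of \(\kappa\) witnessed by \(j_\delta\), while if \(\kappa=\delta\) the ultrafilter \(U_\delta\) is a normal measure on \(\delta\) and a separate, reflection-based argument is needed to find a \(\delta\)-supercompact cardinal strictly below \(\delta\).

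The hypothesis enters because \(U_\delta\) is itself weakly normal, so having two distinct weakly normal ultrafilters lets me fix a weakly normal \(W\neq U_\delta\). By \cref{KetonenMinimal}, \(U_\delta\) is \(\sE\)-least among uniform ultrafilters, so \(U_\delta\sE W\); and by \cref{KetonenCombinatorial}, \(W\) fails the second Ketonen condition, so \(\delta_*:=\sup j_W[\delta]=[\textnormal{id}]_W\) is tail uniform in \(M_W\). Hence \(M_W\) carries a uniform countably complete ultrafilter on \(\delta':=\textnormal{cf}^{M_W}(\delta_*)\) — structure that is provably absent from the Ketonen ultrapower \(M_\delta\), since \(\sup j_\delta[\delta]\) is not tail uniform in \(M_\delta\). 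This is the extra leverage distinguishing the present situation from the general open case, where only \(U_\delta\) is available.

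To use it, I would pass to the canonical comparison \((i_0,i_1):(M_\delta,M_W)\to N\) of \((j_\delta,j_W)\) given by \cref{Reciprocity1}, whose \(M_W\)-side is \(i_1=j^{M_W}_{\tr{W}{U_\delta}}\). Because \(\sup i_0\circ j_\delta[\delta]\) is not tail uniform in \(N\) while \(\delta_*\) is tail uniform in \(M_W\), the embedding \(i_1\) must be discontinuous at \(\delta_*\); combining this with \cref{DeltaFactor} and \cref{CombinatorialInternal} I would extract from \(Z=\tr{W}{U_\delta}\) a uniform ultrafilter whose completeness has been bumped to a successor cardinal, exactly as the normal measure bumps completeness in \cref{NormalTrans}. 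Such an ultrafilter is \({<}\delta\)-irreducible by the argument of \cref{SuccIrred}, so \cref{IrredThm} — applied inside the auxiliary ultrapower at the successor cardinal produced there, rather than at the inaccessible image of \(\delta\) — should yield an ultrapower with the relevant closure, witnessing supercompactness. When \(\kappa<\delta\) this closure transfers back through \cref{InaccCover} to give \(P(\delta)\subseteq M_\delta\); when \(\kappa=\delta\) the supercompact cardinal obtained lies in an interval of the form \([\,\cdot\,,\delta]\) and reflects to some \(\kappa<\delta\) by the standard argument used in \cref{MenasLocal}.

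The hard part is precisely the inaccessible gap: converting the tail-uniformity of \(\delta_*\) in \(M_W\) into full \(\delta\)-closure (equivalently \(P(\delta)\subseteq M_\delta\)) at a strongly inaccessible \(\delta\), where the independent-family method of \cref{TightCover} is unavailable because tight covering forces \(U_\delta\cap M_\delta\notin M_\delta\) by \cref{CoverDichotomy}. The technical crux is arranging that the Irreducibility Theorem can be invoked at all, which requires manufacturing a genuine successor cardinal inside the auxiliary ultrapower on which the translated ultrafilter concentrates; and in the degenerate case where every weakly normal ultrafilter on \(\delta\) is a normal measure, the argument must instead exploit the linearity and rigidity of the Mitchell order under UA to locate the supercompact cardinal below \(\delta\).
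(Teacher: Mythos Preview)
The paper explicitly omits the proof of this theorem (it is stated with a \texttt{\textbackslash qed} and followed by ``We omit the proof''), so there is no argument in the paper to compare against. What follows is an assessment of your proposal on its own merits.

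Your treatment of the successor case is correct and, as you note, does not even use the full hypothesis: a single uniform countably complete ultrafilter on a successor cardinal \(\delta\) already yields \(\delta\)-supercompactness of \(\textsc{crt}(j_\delta)<\delta\) by \cref{LeastSuper}. You also correctly isolate the key structural fact for the remaining case: any weakly normal \(W\neq U_\delta\) must concentrate on tail-uniform ordinals, so \([\text{id}]_W=\sup j_W[\delta]\) is tail uniform in \(M_W\).

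However, the inaccessible case is not a proof but a wish list. The central step, ``extract from \(Z=\tr W {U_\delta}\) a uniform ultrafilter whose completeness has been bumped to a successor cardinal, exactly as the normal measure bumps completeness in \cref{NormalTrans},'' is not justified. In \cref{NormalTrans} one translates through a \emph{normal} ultrafilter \(D\) on a measurable \(\kappa<\delta\), and the bump from \(\kappa\)-complete to \(\kappa^+\)-complete comes from the fact that \(\kappa\) is not measurable in \(M_D\). Here you are translating \(U_\delta\) through a weakly normal \(W\) on \(\delta\) itself; there is no analogous mechanism identified for raising completeness, and you never specify what the target successor cardinal is supposed to be or why \cref{SuccIrred} would apply to the resulting object. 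You also skip the case where \(\delta\) is weakly but not strongly inaccessible: the \cite{Frechet} input you invoke for \(\delta\)-strong compactness of \(\textsc{crt}(j_\delta)\) is stated only for successor or strongly inaccessible \(\delta\).

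Most seriously, you do not explain how to obtain full \(\delta\)-supercompactness, as opposed to \({<}\delta\)-supercompactness, at a strongly inaccessible \(\delta\). The paper itself flags this as the open obstruction in \cref{Anom}: \cref{IrredThm} only applies at successor cardinals, and \cref{InaccCover} only transfers \(\text{Ord}^\delta\cap M_\delta\), not \(\text{Ord}^\delta\), into \(M\). Your appeal to \cref{InaccCover} to get \(P(\delta)\subseteq M_\delta\) is circular, since that is precisely what is missing. You acknowledge this (``The hard part is precisely the inaccessible gap''), but acknowledging a gap is not closing it. The degenerate case \(\kappa=\delta\) is likewise dismissed with a gesture toward ``linearity and rigidity of the Mitchell order'' without any concrete argument.
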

We omit the proof, but this raises another question:
\begin{qst}[UA]
Suppose \(\delta\) is a regular cardinal and \(\nu < \delta\) is a successor cardinal. Suppose \(\delta\) is a regular cardinal that carries distinct \(\nu\)-complete weakly normal ultrafilters. Is there a \(\delta\)-supercompact cardinal \(\kappa\) such that \(\nu < \kappa \leq \delta\)?
\end{qst}

Another interesting question is whether the ideas from the previous section suffice to characterize supercompactness at inaccessible cardinals:
\begin{conj}[UA]
Suppose \(\delta\) is an inaccessible cardinal. Suppose there is a \(\kappa\)-complete ultrafilter extending the club filter on \(P_\kappa(\delta)\). Then \(\kappa\) is either \(\delta\)-supercompact or a measurable limit of \(\delta\)-supercompact cardinals.
\end{conj}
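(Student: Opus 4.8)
The plan is to follow the template of \cref{LBL}, replacing its singular-upgrade step by a closure argument at \(\delta\) itself. The direction (2)\(\Rightarrow\)(1) is Menas's argument and needs no UA, so assume (1). Since the hypothesized \(\kappa\)-complete ultrafilter on \(P_\kappa(\delta)\) is fine, its \(\sup\)-pushforward witnesses that \(\delta\) is uniform, and by \cite{Frechet} Theorem 7.8 (as used in \cref{InaccCover}) \(\textsc{crt}(j_\delta)\) is \(\delta\)-strongly compact. Let \(\mathcal F\) be the normal fine filter on \(P(\delta)\) generated by the club filter on \(P_\kappa(\delta)\). For a successor cardinal \(\nu < \kappa\), let \(U\) be the \(\swo\)-least \(\nu\)-complete ultrafilter isomorphic to an extension of \(\mathcal F\). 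Because \cref{IrredExt} is stated for an arbitrary target cardinal, applying it with \(\lambda = \delta\) shows that \(U\) is \({<}\delta\)-irreducible. First I would run the dichotomy exactly as in \cref{MenasLocal} and \cref{LBL}: if for some \(\nu < \kappa\) this least \(U\) is already \(\kappa\)-complete, then I aim to show \(U\) witnesses \(\delta\)-supercompactness of \(\kappa\); otherwise, as \(\nu\to\kappa\) the critical points \(\textsc{crt}(U)\) are \(\delta\)-supercompact and cofinal below \(\kappa\), so that \(\kappa\) is a measurable limit of \(\delta\)-supercompact cardinals.

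The entire substance is therefore in showing that a \({<}\delta\)-irreducible ultrapower \(i : V\to M\) satisfies \(M^\delta\subseteq M\). Here the successor machinery does not transfer cleanly. Applying \cref{IrredThm} at each successor cardinal \(\gamma < \delta\) gives \(M^\gamma\subseteq M\) for all \(\gamma < \delta\), i.e.\ \({<}\delta\)-supercompactness; but because \(\delta\) is regular this does \emph{not} yield \(i[\delta]\in M\), and there is no singularity to exploit as there was in \cref{LBL}. The strongest available tool is \cref{InaccCover}, which (using the \(\delta\)-strong compactness of \(\textsc{crt}(j_\delta)\)) gives only \(\textnormal{Ord}^\delta\cap M_\delta\subseteq M\). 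Thus the argument closes precisely when \(\textnormal{Ord}^\delta\cap M_\delta = \textnormal{Ord}^\delta\), that is, when \(M_\delta\) is closed under \(\delta\)-sequences: in that case \(\textnormal{Ord}^\delta\subseteq M\), so \(i\restriction\delta\in M\) and hence \(i[\delta]\in M\), whence the standard closure criterion for ultrapowers gives \(M^\delta\subseteq M\). Supercompactness of \(\textsc{crt}(U)\), and the conclusion of the conjecture, then follow as in the successor case.

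The main obstacle is exactly the open problem flagged in the discussion after \cref{InaccCover}: it is not known whether \(M_\delta\) is closed under \(\delta\)-sequences when \(\delta\) is inaccessible, and by the corollary stated immediately after \cref{InaccCover} this closure is \emph{equivalent} to the \(\delta\)-closure of every \({<}\delta\)-irreducible ultrapower, so there is no way around it within this strategy. Resolving it would require a genuinely new analysis of \(j_\delta : V\to M_\delta\) at inaccessible \(\delta\)---controlling \(\textnormal{cf}^{M_\delta}(\sup j_\delta[\delta])\) and the tight covering property there along the lines of \cref{CoverDichotomy} and \cref{TightCover}---but without the cardinal-arithmetic leverage (Solovay's \((\delta^{<\kappa})^{M_\delta}=\delta\)) that drives the successor argument and underlies \cref{TightCover}. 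I therefore expect this last step to be the crux, and in the present state of the theory genuinely open, which is why the statement is recorded as a conjecture rather than a theorem.
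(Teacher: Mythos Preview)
Your analysis is correct: the statement is recorded in the paper as a conjecture with no proof, and your outline identifies precisely the obstruction the paper itself flags---namely that \cref{InaccCover} gives only \(\textnormal{Ord}^\delta\cap M_\delta\subseteq M\), so the argument closes exactly when \(M_\delta\) is closed under \(\delta\)-sequences, which is open for inaccessible \(\delta\). There is nothing to compare against; your diagnosis matches the paper's own assessment of where the difficulty lies.
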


\subsection{Almost huge cardinals}
\begin{thm}[UA]\label{Huge}
Suppose there is a countably complete weakly normal ultrafilter on a regular cardinal that concentrates on a fixed cofinality. Then there is an almost huge cardinal.
\end{thm}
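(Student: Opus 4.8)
The plan is to show that the critical point of the given ultrafilter's ultrapower embedding is almost huge, with the ultrapower itself serving as the witnessing embedding. Fix a countably complete weakly normal ultrafilter $U$ on a regular cardinal $\delta$ with $\{\alpha<\delta:\textnormal{cf}(\alpha)=\mu\}\in U$ for some fixed (necessarily regular) $\mu$, and, using that $\sE$ wellorders $\Un$, replace $U$ by the $\sE$-least ultrafilter with these properties, so that $M_U$ is as close to $V$ as possible. Write $\kappa=\textsc{crt}(j_U)$ and $\delta_*=[\textnormal{id}]_U$. Weak normality of $U$ is exactly the statement that $\delta_*=\sup j_U[\delta]$, and since $U$ concentrates on cofinality $\mu$, the {\L}o\'s theorem gives $\textnormal{cf}^{M_U}(\delta_*)=[\alpha\mapsto\textnormal{cf}(\alpha)]_U=j_U(\mu)$. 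Now $j_U\restriction\delta$ is increasing with range cofinal in $\delta_*$, so $\delta_*$ has true cofinality $\delta$; but $\delta_*$ also carries an $M_U$-cofinal sequence of ordertype $j_U(\mu)$, whence $\textnormal{cf}^V(j_U(\mu))=\delta$. Since $\mu$ is regular below $\delta$, this forces $j_U(\mu)\neq\mu$, hence $\mu\geq\kappa$, and $j_U(\mu)$ is singular in $V$ although regular in $M_U$.

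With this structure in hand, the goal is to prove that $j_U$ witnesses that $\kappa$ is almost huge with target $\lambda=j_U(\kappa)$, i.e.\ that $M_U^{<\lambda}\subseteq M_U$. The cofinality computation is what makes the target sharp: $\delta_*$ has $M_U$-cofinality $j_U(\mu)\geq\lambda$ but true cofinality $\delta<\lambda$, so no ultrapower of $V$ can be closed under $j_U(\mu)$-sequences, and in the principal case $\mu=\kappa$ this places the closure ordinal at exactly $\lambda$. Thus the entire content is the positive closure statement $M_U^{<\lambda}\subseteq M_U$, which I would reduce, via the standard criterion that an ultrapower is closed under $\gamma$-sequences iff it has the tight covering property at $\gamma$ and $P(\gamma)\subseteq M_U$, to verifying covering and power-set absorption at every $\gamma<\lambda$.

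To produce this closure I would run the Irreducibility Theorem level by level. For each relevant $\gamma<\lambda$ I would compare $j_U$ with the Ketonen embedding at the $M_U$-cofinality of $\sup j_U[\gamma]$, using \cref{KetonenCov} and the concentration on cofinality $\mu$ to control these cofinalities; the $\sE$-minimality of $U$ should force the ultrafilters arising in these comparisons to be $<\gamma$-irreducible and to live on cardinals below $\delta$. Then \cref{IrredThm} converts each such irreducible ultrafilter into a witness of the required $\gamma$-closure, exactly as irreducible ultrafilters were converted into supercompactness witnesses in the proof of \cref{MenasLocal}. Assembling these closures over all $\gamma<\lambda$ yields $M_U^{<\lambda}\subseteq M_U$.

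The main obstacle is precisely this closure step: converting the combinatorial minimality of a weakly normal ultrafilter into full supercompactness-style closure of $M_U$ below the image of its critical point, uniformly in $\gamma<\lambda$. The difficulty is that $\lambda=j_U(\kappa)$ need not be a successor cardinal of $M_U$, so \cref{IrredThm} cannot be applied to $\lambda$ itself but only to the individual levels $\gamma<\lambda$, and one must check both that the irreducible factors extracted at each level genuinely live below $\delta$ (so that \cref{IrredThm} is available) and that their closure propagates to $M_U$ rather than to some intermediate comparison model, navigating the singular cofinality structure at $\delta_*$. Once $M_U^{<\lambda}\subseteq M_U$ is secured, $\kappa$ is almost huge by definition, completing the proof.
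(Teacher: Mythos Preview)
The central gap is that you never establish that \(U\) itself is irreducible, which is the linchpin of the paper's argument. The paper proves (\cref{AIrred}, via \cref{FIrred}, \cref{Club2}, and ultimately \cref{NormalGeneration} and \cref{ClubLemma}) that the \(\sE\)-least weakly normal ultrafilter concentrating on any fixed \(A\subseteq\delta\) is irreducible: if \(D\sD U\) with \(D\) on some \(\gamma<\delta\), then \(U/D\) still extends \(j_D\) of the relevant normal filter, so \(j_D(U)\E U/D\), and \cref{Pushdown} forces \(D\) to be principal. Once \(U\) is irreducible, \cref{IrredThm} applies \emph{directly to \(j_U\)} at every uniform successor cardinal below \(\delta\); no auxiliary ``irreducible factors extracted at each level'' are needed. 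Your proposed substitute---compare \(j_U\) with the Ketonen embedding at \(\text{cf}^{M_U}(\sup j_U[\gamma])\) and hope \(\sE\)-minimality of \(U\) forces irreducibility of whatever arises---does not work as stated: such a comparison produces internal ultrapower embeddings of \(M_U\) and of \(M_{\gamma'}\), not ultrafilters on \(V\) whose position in the seed order is controlled by the minimality of \(U\) among weakly normal ultrafilters concentrating on \(S^\delta_\mu\).

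You also omit two further ingredients the paper uses when \(\delta\) is inaccessible. First, Ketonen's theorem that a weakly normal ultrafilter on \(\delta\) concentrating on \(S^\delta_\mu\) is \((\mu^+,\delta)\)-regular; this makes \(j_U\) discontinuous at every regular cardinal in \([\mu^+,\delta]\), so all such cardinals are uniform and \cref{IrredThm} is available there. Second, the combination of the GCH result from \cite{GCH} (to get \(\delta\) strongly inaccessible) with \cref{InaccCover} and \cref{TightCover} to obtain tight covering at \(\delta\); this is what yields \(j_U(\mu)=\delta\) exactly, rather than merely \(\text{cf}(j_U(\mu))=\delta\), and hence that \(M_U\) is closed under \({<}j_U(\mu)\)-sequences. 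Your cofinality computation gives only \(\text{cf}^V(j_U(\mu))=\delta\), which by itself does not pin down the target of almost hugeness.
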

Thus the same issues from \cref{Anom} prevent us from showing that there is a huge cardinal under these hypotheses.
We need a lemma which is useful in conjunction with \cref{IrredThm}.

\begin{defn}
Suppose \(\nu\) is a cardinal, \(\lambda\geq \nu\) is a regular cardinal, \(\mathcal F\) is a normal fine filter on \(P(\lambda)\). Then \(U^\nu_\delta(\mathcal F)\) is the \(\sE\)-least countably complete weakly normal ultrafilter isomoprhic to an extension of \(\mathcal F\) if it exists. 
\end{defn}

The proof of \cref{IrredExt} yields the following fact:

\begin{lma}[UA]\label{FIrred}
Suppose \(\nu\) is a successor cardinal, \(\lambda\geq \nu\) is a cardinal, and \(\mathcal F\) is a normal fine filter on \(P(\lambda)\). Then \(U^\nu_\delta(\mathcal F)\) is irreducible.\qed
\end{lma}

We just need the following corollary:
\begin{cor}[UA]\label{AIrred}
If \(\delta\) is a regular cardinal and \(A\subseteq \delta\), then the \(\sE\)-least weakly normal ultrafilter concentrating on \(A\) is irreducible.
\begin{proof}
Let \(\mathcal F\) be the club filter on \(\delta\) restricted to \(A\) viewed as a filter on \(P(\delta)\). Then \(\mathcal F\) is a normal fine filter on \(P(\delta)\). A weakly normal ultrafilter on \(\delta\) concentrates on \(A\) if and only if it is isomorphic to an extension of \(\mathcal F\). (For the forwards implication one needs that \(\delta\) is regular: weakly normal ultrafilters on singular cardinals need not extend the club filter.) Thus the \(\sE\)-least weakly normal ultrafilter concentrating on \(A\) is \(U^{\omega_1}_\delta(\mathcal F)\), which is irreducible by \cref{FIrred}.
\end{proof}
\end{cor}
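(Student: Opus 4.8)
The plan is to reduce the statement to \cref{FIrred} by recognizing the family of countably complete weakly normal ultrafilters concentrating on \(A\) as exactly the family of ultrafilters isomorphic to an extension of a suitably chosen normal fine filter \(\mathcal F\) on \(P(\delta)\). Once this identification is in place, the \(\sE\)-least member of the family is by definition \(U^{\omega_1}_\delta(\mathcal F)\), and since \(\omega_1\) is a successor cardinal, \cref{FIrred} immediately delivers that this ultrafilter is irreducible.

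First I would let \(\mathcal F\) be the restriction of the club filter on \(\delta\) to \(A\), transported to a filter on \(P(\delta)\) via the standard correspondence between weakly normal ultrafilters on a regular cardinal and normal fine ultrafilters on \(P(\delta)\). This correspondence is furnished by the Solovay sets of \cref{Solovay}, on which \(\sup : P(\delta)\to\delta\) is a bijection and under which \(\sup_*\) implements the isomorphism. I would then check that \(\mathcal F\) is normal and fine, which is routine given that the club filter on \(\delta\) is normal and \(A\) is a single fixed set.

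The heart of the matter is the equivalence: a countably complete weakly normal ultrafilter \(U\) on \(\delta\) concentrates on \(A\) if and only if \(U\) is isomorphic to an extension of \(\mathcal F\). The backward direction is immediate, since any extension of \(\mathcal F\) is normal and fine and hence isomorphic under \(\sup_*\) to a weakly normal ultrafilter containing \(A\). The forward direction is where the regularity of \(\delta\) is essential: a weakly normal ultrafilter on a regular cardinal automatically extends the club filter, so if it additionally concentrates on \(A\) it extends \(\mathcal F\); on singular cardinals weakly normal ultrafilters need not extend the club filter, so this step would break down. I expect this forward direction — and in particular confirming that the \(\sE\)-least elements of the two a priori distinct families genuinely coincide — to be the main obstacle, although in the end it is a clean appeal to the classical theory of weakly normal ultrafilters.

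Having established the equivalence, the \(\sE\)-least countably complete weakly normal ultrafilter concentrating on \(A\) is precisely \(U^{\omega_1}_\delta(\mathcal F)\), using that ``countably complete'' is ``\(\omega_1\)-complete''. Since \(\omega_1\) is a successor cardinal and \(\mathcal F\) is a normal fine filter on \(P(\delta)\), \cref{FIrred} applies and shows that \(U^{\omega_1}_\delta(\mathcal F)\) is irreducible, which completes the argument.
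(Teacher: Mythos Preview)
Your proposal is correct and follows essentially the same approach as the paper: define \(\mathcal F\) as the club filter on \(\delta\) restricted to \(A\), viewed as a normal fine filter on \(P(\delta)\); identify the weakly normal ultrafilters on \(\delta\) concentrating on \(A\) with the ultrafilters isomorphic to extensions of \(\mathcal F\) (using regularity of \(\delta\) for the forward direction); and conclude by applying \cref{FIrred} with \(\nu=\omega_1\). The only cosmetic difference is that you invoke the Solovay correspondence explicitly to transport the filter to \(P(\delta)\), whereas the paper simply says ``viewed as a filter on \(P(\delta)\)''; this is harmless elaboration, not a different argument.
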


\begin{proof}[Proof of \cref{Huge}]
Suppose \(\kappa < \delta\) are regular cardinals and \(U\) is the \(\sE\)-least countably complete weakly normal ultrafilter such that \(S^\delta_\kappa\in U\). By \cref{AIrred}, \(U\) is an irreducible ultrafilter. 

If \(\delta\) is a successor cardinal then it follows immediately from \cref{IrredThm} that \(U\) is \(\delta\)-supercompact.

Suppose \(\delta\) is a weakly inaccessible cardinal. By a theorem of Ketonen \cite{Ketonen}, \(U\) is \((\kappa^+,\delta)\)-regular, and therefore \(j_U\) is discontinuous at every regular cardinal in the interval \([\kappa^+,\delta]\). It now follows from \cref{IrredThm} that \(M_U\) is closed under \(\lambda\)-sequences for all \(\lambda < \delta\). Therefore some cardinal less than \(\delta\) is \(\delta\)-supercompact. By the main theorem of \cite{GCH} (Theorem 4.1), it follows that for all sufficiently large \(\lambda < \delta\), \(2^\lambda = \lambda^+\). In particular, \(\delta\) is strongly inaccessible.

By \cref{TightCover} and \cref{InaccCover}, it follows that \(M_U\) has the tight covering property at \(\delta\). Thus \(\text{cf}^{M_U}(\sup j_U[\delta]) = \delta\). On the other hand, \(j_U(\kappa) = \text{cf}^{M_U}(\sup j_U[\delta])\) since \(U\) is weakly normal and concentrates on \(S^\delta_\kappa\). Therefore \(j_U(\kappa) = \delta\). Thus \(j_U : V\to M\), \(j_U(\kappa) = \delta\), and \(M_U\) is closed under \(\lambda\)-sequences for all \(\lambda < \delta\). It follows that \(\textsc{crt}(U)\) is almost huge.
\end{proof}

\bibliography{Bibliography}{}
\bibliographystyle{unsrt}
\end{document}